\documentclass[11pt]{article}
\usepackage{amsfonts,amsmath,amsthm,stmaryrd}
\usepackage{amssymb,amsmath,amsthm, enumerate}
\usepackage{latexsym}
\usepackage{fullpage}
\usepackage{tikz}
\usetikzlibrary[patterns]
\usetikzlibrary{arrows,shapes,trees, positioning} 
\usetikzlibrary{decorations.markings}
\tikzstyle{ann} = [fill=white,inner sep=1pt]

\usepackage{hyperref}
\usepackage{graphicx}
\usepackage{float}

\newtheorem{theorem}{Theorem}[section]

\newtheorem{prop}[theorem]{Proposition}

\newtheorem{lemma}[theorem]{Lemma}
\newtheorem{remark}[theorem]{Remark}
\newtheorem{example}[theorem]{Example}
\newtheorem{cor}[theorem]{Corollary}


\newcommand{\one}{\mathbf 1}

\def\P{\mathbb{P}}
\def\E{\mathbb E}

\newcount \questionno
\def\question{\medbreak
        \global \advance \questionno 1
        \textbf{Problem \the\questionno}.\enspace \ignorespaces}
\newcommand{\remove}[1]{}
 \DeclareMathOperator{\var}{Var} 
  \DeclareMathOperator{\V}{Var} 
  \DeclareMathOperator{\Cov}{Cov}

\title{Convergence of the centered maximum of log-correlated Gaussian fields}

\author{Jian Ding\thanks{Partially supported by NSF grant DMS-1313596.} \\
University of Chicago\and Rishideep Roy\footnotemark[1] \\ University of Chicago \and Ofer Zeitouni\thanks{Partially supported by a grant from the 
	Israel Science Foundation and by the Herman
P. Taubman chair of Mathematics at the Weizmann Institute.
} \\ Weizmann Institute  \\\& New York University}
\date{March 10, 2015}
\begin{document}

\maketitle

\begin{abstract}
We show that the centered maximum of a sequence of log-correlated Gaussian 
fields
in any dimension converges in distribution, under the assumption that the 
covariances of the fields converge in a suitable sense. We identify the limit
as a randomly shifted Gumbel distribution, and characterize the random shift 
as the limit in distribution of a sequence of random variables, reminiscent of
the derivative martingale in the theory of Branching Random Walk and
Gaussian Chaos. We also discuss applications of the main convergence theorem 
and discuss examples that show that for logarithmically correlated fields, some
additional structural assumptions of the type we
make are needed for convergence 
of the centered maximum. 
\end{abstract}

\section{Introduction}

The convergence in law for the 
centered maximum of various log-correlated Gaussian fields, 
including branching Brownian motion (BBM), branching random walk (BRW), 
two-dimensional discrete Gaussian free field (DGFF), etc.,  
has recently been the focus of intensive study.
Of greatest relevance to the current paper are 
\cite{aidekon, Bramson83, BDZ13, LS87, Madaule13}. Historically,
the first result showing the correct centering and the tightness of the centered
maximum for BBM appears in the pioneering work \cite{Bramson78}, followed by the
proof of convergence of the law of the centered 
maximum \cite{Bramson83}; the latter
proof relied heavily on the F-KPP equation \cite{Fisher,KPP37}
describing the evolution of the
distribution of the maximum. A probabilistic description of the limit was
obtained in \cite{LS87}, using the notion of derivative martingale that 
they introduce. Convergence for the centered maximum of BRW 
with Gaussian increments was obtained in 
\cite{Bachmann}, while the analogous result
for general BRWs under mild assumptions was only obtained quite 
recently in the important work \cite{aidekon}, using the notion of 
derivative martingale to describe the limit; see also \cite{BDZ14}.

When no explicit tree structure is present, exact results concerning
the convergence in distribution of the maximum of Gaussian fields is
harder to establish. Recently, much progress has been achieved in
this direction: first, the 
two-dimensional DGFF was treated in \cite{BDZ13}, where convergence 
in distribution
of the centered maximum to a randomly shifted Gumbel random variable 
is established. The same result is obtained in \cite{Madaule13} for
a general class of log-correlated fields, the so called $*$-scale invariant 
models, where
the covariances of the fields admit a certain kernel representation. 
In the current paper, we extend in a systematic way the class
of logarithmically correlated fields for which the same result holds.
Our methods are inspired by \cite{BDZ13}, which in turn rely heavily on 
the modified second moment method, the modified BRW introduced in 
\cite{BZ10}, tail estimates proved for the DGFF in \cite{DZ12}, and Gaussian
comparisons.

We now introduce the class of fields considered in the paper.
Fix $d\in \mathbb{N}$ and let $V_N=\mathbb{Z}^d_N$ be the $d$-dimensional 
box of side length $N$ with the left bottom corner located at the origin.  
For convenience, we consider a suitably normalized version of 
Gaussian fields $\{\varphi_{N, v}: v\in V_N\}$ satisfying the following.
\begin{itemize}
	\item[(A.0)] \textbf{(Logarithmically  bounded fields)} \
There 
exists a constant $\alpha_0 >0$ such that for all $u, v\in V_N$,\\
 $$\var \varphi_{N, v} \leq \log N + \alpha_0$$ and 
$$\E (\varphi_{N, v} - \varphi_{N, u})^2  \leq 2 \log_+ |u-v| - | \var \varphi_{N, v} - \var \varphi_{N, u}| + 4\alpha_0,$$
\end{itemize}
where $|\cdot|$ denotes the Euclidean norm and 
$\log_+x = \log x \vee 0$. Note that Assumption (A.0) is rather mild 
and in particular is satisfied by the two-dimensional DGFF and $*$-scale invariant models. It is however strong enough to provide 
an a-priori tight estimate on the right tail of the distribution of the maximum.

Set $M_N = \max_{v\in V_N} \varphi_{N, v}$ and
\begin{equation}\label{eq-thm-expectation}
m_N = \sqrt{2d} \log N - \tfrac{3}{2\sqrt{2d}} \log \log N \,.
\end{equation} 
\begin{prop}\label{prop-maxrtfluc} 
Under Assumption (A.0), there exists a
constant $C = C(\alpha_0)> 0$ 
such that for all $N \in \mathbb{N}$ and $z \ge 1$,
\begin{equation}\label{eq-maxrtflucrttail}
\P ( M_N \ge m_N+z)
 \leq 
C ze^{- \sqrt{2d} z} e^{- C^{-1} z^2/n} \,.
\end{equation}
Furthermore, for all  $z \ge 1, y \ge 0$ and $A \subseteq V_N$ we have
\begin{equation}\label{eq-maxrtflucrttail2}
 \P( \max_{v \in A} \varphi_{N, v} \ge m_N + z -y ) \leq C \left( \tfrac{|A|}{|V_N|} \right)^{1/2} ze^{-\sqrt{2d}(z-y)} \,.
\end{equation}
\end{prop}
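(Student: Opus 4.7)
The plan is to compare $\varphi$ with a modified branching random walk (MBRW), whose right-tail behaviour can be controlled directly using classical BRW/ballot estimates. This is the strategy of \cite{BDZ13} adapted to the more general hypothesis (A.0).

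First I would construct an auxiliary Gaussian field $\theta_{N,v}=\sum_{k=0}^{n} g_{k,B_k(v)}$, where $B_k(v)$ is a dyadic box of side length $2^k$ containing $v$ and the $g_{k,B}$ are independent centred Gaussians with scale-variances chosen so that $\var\theta_{N,v}=\log N+O(\alpha_0)$ and $\E(\theta_{N,u}-\theta_{N,v})^2 = 2\log_+|u-v|+O(\alpha_0)$. Under Assumption (A.0), the increments of $\varphi$ are dominated by those of $\theta$ up to the variance discrepancy $|\var\varphi_{N,v}-\var\varphi_{N,u}|$; after equalizing variances by adding independent Gaussians of magnitude $O(\alpha_0)$ per site, Kahane's inequality (or Slepian's, applied to smoothed indicators of the max) yields $\P(M_N\ge m_N+z)\le C\,\P(\max_v\theta_{N,v}\ge m_N+z-C)$ for a constant $C=C(\alpha_0)$.

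Next I would establish the tail bound for $\max_v\theta_{N,v}$ by a first-moment + ballot argument. For each $v$ the partial sums $S_k^v=\sum_{j\le k} g_{j,B_j(v)}$ form a Gaussian random walk of length $n\asymp\log N$, and the single-point tail $\P(\theta_{N,v}\ge m_N+z)$ is of order $(\log N)\cdot N^{-d} e^{-\sqrt{2d}\,z-C^{-1}z^2/n}$ by direct Gaussian computation using $m_N = \sqrt{2d}\log N - \tfrac{3}{2\sqrt{2d}}\log\log N$. Restricting to the ballot event that $S_k^v$ stays below the line $k\mapsto\sqrt{2d}\,k+z$ throughout supplies an extra factor $\sim z/n$ via the ballot theorem, so that summing over $v\in V_N$ gives the required bound $Cze^{-\sqrt{2d}\,z-C^{-1}z^2/n}$. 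Entropic repulsion, standard for MBRW, ensures that with high probability the maximum is attained at a ballot-respecting point, converting this first-moment bound into a genuine tail bound. For \eqref{eq-maxrtflucrttail2}, the same argument is applied with the sum over $V_N$ replaced by a sum over $A$; the square-root factor $(|A|/|V_N|)^{1/2}$ arises from a Cauchy--Schwarz decomposition at the intermediate scale $k_\star \sim \log_2(|V_N|/|A|)^{1/d}$, implemented by splitting $\theta$ into its contributions from scales $\le k_\star$ and $> k_\star$, using the hierarchical independence.

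The main obstacle is the Gaussian comparison step. Assumption (A.0) constrains only variances and increments, not the full covariance structure, and the compensation term $-|\var\varphi_{N,v}-\var\varphi_{N,u}|$ appearing in (A.0) is precisely what permits comparison after variance equalization. Making this rigorous --- choosing scale-variances in the MBRW, arranging the auxiliary independent fields so that no subtracted variance is ever negative, and verifying the hypotheses of Kahane's inequality --- is the technical heart of the argument. The ballot and entropic-repulsion estimates for the MBRW are then standard BRW technology.
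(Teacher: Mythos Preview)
Your overall strategy is correct and matches the paper's approach: reduce to a BRW-type field via Gaussian comparison, then invoke the ballot/first-moment estimates already established for BRW in \cite[Lemma~3.8]{BDZ13}. The paper's proof is literally two sentences long, deferring entirely to Lemma~\ref{lem-righttailcomp} for the comparison step and to \cite{BDZ13} for the BRW estimate.

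However, your proposed comparison mechanism has a genuine gap. Adding \emph{independent} Gaussians $\epsilon_v$ per site to equalize variances increases the increment variance by $\var\epsilon_u+\var\epsilon_v$, which can be of order $\alpha_0$ and need not be absorbed by the compensation term $-|\var\varphi_{N,v}-\var\varphi_{N,u}|$ in (A.0). The paper's device (Lemma~\ref{lem-righttailcomp}) is different in two essential ways. First, it compares not to a BRW on $V_N$ but to a BRW on the \emph{dilated} box $V_{2^\kappa N}$ via the map $v\mapsto 2^\kappa v$; this makes both the BRW variance and its increments larger by $\kappa\log 2$, creating room. Second, to equalize variances one adds $a_v X$ where $X$ is a \emph{single shared} standard Gaussian and $a_v^2=\var\mathcal{R}_{2^\kappa N,2^\kappa v}-\var\varphi_{N,v}$. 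The point is that the contribution to the increment is then $(a_u-a_v)^2\le|a_u^2-a_v^2|=|\var\varphi_{N,u}-\var\varphi_{N,v}|$, because the BRW has constant variance. This is exactly the compensation term in (A.0), and is why that term appears with the sign it does. With independent per-site noise you get $a_u^2+a_v^2$ instead of $(a_u-a_v)^2$, and the argument breaks.

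Your ballot/entropic-repulsion sketch for the BRW tail and the intermediate-scale splitting for the $(|A|/|V_N|)^{1/2}$ factor are both correct in outline and match what \cite{BDZ13} does; the paper simply cites that reference rather than reproducing the computation.
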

\noindent The proof of Proposition \ref{prop-maxrtfluc} is provided in Section 
\ref{sec:tightness}.

By Proposition~\ref{prop-maxrtfluc}, 
if one has a complementary lower bound showing that for a large enough
constant $C$,
$\max_{v\in V_N} \varphi_{N, v}>m_N-C$
with high probability,
it follows that 
the maximizer of the Gaussian field is away from the boundary with high
probability. Therefore,  in the study of convergence of the centered maximum,
it suffices to consider the Gaussian field away from the boundary 
(more precisely, with distance $\delta N$ away from the boundary where 
$\delta\to 0$ after $N\to \infty$).  
In light of this, introduce the sets 
$V_N^\delta=\{z\in V_N: d(z,\partial V_N)\geq \delta N\}$ and
$V^\delta=[\delta,1-\delta]^d$, where $d(z,\partial V_N)=\min\{\|z-y\|_\infty: y\not\in V_N\}$.
Then, introduce the  following assumption.
\begin{itemize}
	\item[(A.1)] \textbf{(Logarithmically correlated fields)} \
For any $\delta>0$ there 
exists a constant $\alpha^{(\delta)}>0$ such that for all $u, v\in V_N^\delta$,
		$|\Cov( \varphi_{N, v},\varphi_{N, u}) - (\log N - \log_+ |u-v| 
		)| \leq \alpha^{(\delta)}$.
\end{itemize}
We do not assume Assumption (A.1) for $\delta = 0$ since we wish to 
incorporate Gaussian fields with Dirichlet boundary conditions, such as the
two dimensional DGFF.

Assumptions (A.0) and (A.1) are enough to ensure the tightness of the
sequence $\{M_N-m_N\}_N$.
\begin{theorem}\label{thm-tightness}
Under Assumptions (A.0) and (A.1), we have that $\E M_N = m_N + O(1)$ where
the $O(1)$ term depends on $\alpha_0$ and $\alpha^{(1/10)}$.
In addition, the sequence $M_N - \E  M_N$ is tight.
\end{theorem}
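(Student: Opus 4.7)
Tightness of $M_N - \E M_N$, together with the expectation bound $\E M_N = m_N + O(1)$, both follow from two uniform tail estimates: $\P(M_N \geq m_N + z) \leq \eta_+(z)$ and $\P(M_N \leq m_N - z) \leq \eta_-(z)$ with $\eta_\pm(z) \to 0$ as $z \to \infty$. The upper tail is immediate from Proposition \ref{prop-maxrtfluc}: the bound $\eta_+(z) \leq C z e^{-\sqrt{2d} z}$ is integrable and furnishes both $\E (M_N - m_N)_+ = O(1)$ and upper-side tightness. The substantive content of the theorem is therefore the lower tail, for which Assumption (A.0) alone does not suffice.

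\textbf{Lower tail via MBRW comparison.} Applying \eqref{eq-maxrtflucrttail2} to $A = V_N \setminus V_N^{1/10}$ shows that, with probability bounded away from zero, the maximum on the boundary strip is below $m_N$. It therefore suffices to bound $\P(\max_{v \in V_N^{1/10}} \varphi_{N, v} \leq m_N - z)$ where the refined covariance assumption (A.1) is available. Here I would follow the MBRW-comparison strategy of \cite{BDZ13, BZ10}: construct a modified branching random walk $\{S_{N, v}: v \in V_N^{1/10}\}$ whose covariance equals $\log N - \log_+|u-v| + O(1)$, so that by (A.1) it matches the covariance of $\varphi_N$ up to a bounded additive error. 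A Gaussian comparison of Slepian/Sudakov-Fernique type (or Kahane's inequality, after first equalizing variances by adding small independent Gaussian noise to both fields) then gives a stochastic lower bound of the form $\max_{v \in V_N^{1/10}} \varphi_{N, v} \succeq \max_v S_{N, v} - C$. Since the MBRW inherits tightness of its centered maximum around $m_N$ from the BRW results of \cite{aidekon, Bachmann, BDZ14}, the required $\eta_-(z)$ follows.

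\textbf{Main obstacle and conclusion.} The main technical difficulty is that (A.1) controls covariances only up to a bounded additive error $\alpha^{(1/10)}$, not pointwise to any desired precision, so Slepian's inequality cannot be invoked directly. I would absorb this error in two steps: (i) add a common-variance Gaussian shift on both sides to symmetrize variances up to a bounded discrepancy, and (ii) choose the branching scale of the MBRW sufficiently coarse relative to $\alpha^{(1/10)}$ so that the covariance comparison becomes sign-definite after the shift, at the cost of a bounded additive penalty in the final comparison. Given both tail bounds, the identity $\E M_N - m_N = \int_0^\infty \P(M_N \geq m_N + z)\,dz - \int_0^\infty \P(M_N \leq m_N - z)\,dz$ yields $\E M_N = m_N + O(1)$ with the stated dependence on $\alpha_0$ and $\alpha^{(1/10)}$, and tightness of $M_N - \E M_N$ is then immediate by the triangle inequality.
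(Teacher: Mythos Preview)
Your proposal follows the same route as the paper: the right tail comes from Proposition~\ref{prop-maxrtfluc}, and the left tail is obtained by comparing $\{\varphi_{N,v}: v\in V_N^{1/10}\}$ with an MBRW via Slepian's inequality, after absorbing the $O(1)$ covariance discrepancy from Assumption~(A.1) by enlarging the MBRW scale (the paper's parameter $\kappa$) and adding an independent Gaussian $a_v X$ to equalize variances.

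There is one genuine gap. For your integral identity to yield $\E M_N = m_N + O(1)$, you need $\int_0^\infty \eta_-(z)\,dz<\infty$, not merely $\eta_-(z)\to 0$. Tightness of the MBRW maximum---which is what \cite{BZ10} provides, rather than the BRW references you cite (the MBRW is not itself a branching random walk)---does not by itself give an integrable tail. The paper closes this by proving an explicit exponential left-tail bound $\P(\max_v \mathcal S_{N,v}\le m_N-\lambda)\le Ce^{-c\lambda}$ for the MBRW (Lemma~\ref{lem-MBRWlefttail}) via a direct sub-box independence argument that bootstraps from tightness; this then transfers to $\varphi_N$ through the Slepian comparison (Lemma~\ref{lem-GFlefttail}). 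As a minor aside, your boundary-strip remark is unnecessary for the left tail, since $\max_{V_N}\varphi_N \ge \max_{V_N^{1/10}}\varphi_N$ trivially.
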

\noindent
(The constant  $1/10$ in Theorem \ref{thm-tightness} 
could be replaced by any positive number that is less than $1/3$.) 
The proof of Theorem \ref{thm-tightness} is provided in 
in Section \ref{sec:tightness}.

As we will explain later, Assumptions (A.0) and (A.1) on their own cannot 
ensure convergence in law for the centered maximum. To ensure the latter
we introduce the following additional assumptions.
First, we assume convergence of the covariance in finite scale around the 
diagonal.
\begin{itemize}
	\item[(A.2)] \textbf{(Near diagonal behavior)} \
		There exist a continuous function $f : (0,1)^d \mapsto 
	\mathbb{R}$ and a function $g: \mathbb Z^d \times \mathbb Z^d 
	\mapsto \mathbb R$ such that the following holds. For all $L, 
	\epsilon,\delta > 0$, 
	there exists $N_0= N_0(\epsilon, \delta, L)$ such that for all  
	$x \in V^\delta$, $u, v \in [0,L]^d $ and $N\geq N_0$ 
	we have
\begin{equation*}|\Cov( \varphi_{N, xN+v},\varphi_{N, xN+u})-
	\log N -f(x) -g(u,v)|< \epsilon\,.
\end{equation*}
\end{itemize}
\noindent
Next, we introduce an assumption concerning 
convergence of the covariance off-diagonal 
(in a macroscopic scale). Let 
$\mathcal {D}^d=\{(x,y): x,y\in (0,1)^d, x\neq y\}$.
\begin{itemize}
\item[(A.3)] \textbf{(Off diagonal behavior)}
	There exists a continuous function $h: \mathcal{D}^d\mapsto \mathbb{R}$
	such that the following holds. For all $L, \epsilon,\delta > 0$, 
	there exists $ N_1= N_1(\epsilon, \delta, L)>0$ such that for all 
	$x, y\in V^\delta$ with $|x-y| \ge \frac{1}{L}$ and $N \ge N_1 $ we have
$$
|\Cov( \varphi_{N, xN},\varphi_{N, yN})-h(x,y)|< \epsilon\,.
$$
\end{itemize}
Assumptions (A.2) and (A.3) control the convergence of the covariance on 
both microscopic and macroscopic scale, but allows for fluctuations of order $1$
in mesoscopic scale. It is not hard to check that both the DGFF and the 
$*$-scale fields satisfy Assumptions (A.0)--(A.3). A further example will be
developed in Section \ref{sec:example}.

We remark that Assumptions (A.2) and (A.3) are not necessary for the 
convergence of the centered maximum. Indeed,
one can violate Assumptions (A.2) and (A.3) by perturbing the field at a 
single vertex, but this would not affect the convergence in law of 
the centered maximum,  since with overwhelming probability, 
the maximizer is not at the perturbed vertex. 
However, if Assumptions (A.2) and (A.3) are violated ``systematically'', 
one should not expect a convergence in law for the centered maximum. We will 
give two examples at the end of the introduction as a demonstration on how 
violating (A.2) or (A.3) could destroy convergence in 
law for the centered maximum.

Our main result is the following theorem.
\begin{theorem}\label{thm-convergence}
Under Assumptions (A.0), (A.1), (A.2) and (A.3), the
sequence $\{M_N - \E M_N\}_N$ converges in distribution. 
 \end{theorem}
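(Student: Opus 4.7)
The plan is to follow the two-scale decomposition and Gaussian comparison strategy introduced for the DGFF in \cite{BDZ13}, adapted to the general covariance assumptions here. Fix a large scale parameter $K$ (to be sent to infinity after $N$) and partition the bulk $V_N^\delta$ into disjoint $K$-boxes $\{B_i\}$ with representative points $x_i N$, where $x_i \in V^\delta$. Using Assumptions (A.2) and (A.3) one constructs an auxiliary Gaussian field $\tilde\varphi_{N,v} = X^K_v + Y^K_v$ in which $X^K_v = X_i$ depends on $v$ only through its enclosing box $B_i$, and $(X_i)$ has covariance $h(x_i,x_j)$ off-diagonal and variance $\log(N/K) + f(x_i)$; the field $Y^K$ is taken independent across distinct boxes and in each box carries the local kernel $g$ together with the boundary correction coming from $f(x_i)$. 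A Kahane--Slepian comparison of $\varphi$ and $\tilde\varphi$ based on (A.0)--(A.3), combined with the right-tail estimate of Proposition~\ref{prop-maxrtfluc} to handle the discrepancy on small-probability events, reduces the problem to the convergence in law of $\max_v \tilde\varphi_{N,v} - m_N$.

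Conditioning on the coarse field $X^K$ and using the cross-box independence of $Y^K$, one obtains
$$\P\bigl(\max_v \tilde\varphi_{N,v} \le m_N + x \,\big|\, X^K\bigr) = \prod_i \P\bigl(M^{(i)}_K \le m_N - X_i + x \,\big|\, X^K\bigr),$$
where $M^{(i)}_K := \max_{v \in B_i} Y^K_v$. Theorem~\ref{thm-tightness} applied inside a single $K$-box yields tightness of $M^{(i)}_K - m_K$, and a Bramson-type refinement of Proposition~\ref{prop-maxrtfluc} (using (A.2) to identify the local covariance) gives
$$\P(M^{(i)}_K > m_K + t) = (1 + o_K(1))\, C(x_i)\, t\, e^{-\sqrt{2d}\,t}$$
on a large range of $t$, with $C$ continuous in $x$. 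Since $X_i$ is typically $\sqrt{2d}\log(N/K) + O(1)$, the argument $t = m_N - m_K - X_i + x$ is $O(1)$ on the dominant event, and linearising $\log(1-p) \approx -p$ collapses the product to $\exp(-e^{-\sqrt{2d}x}\, Z_{N,K})$ with
$$Z_{N,K} := \sum_i C(x_i)\,(m_N - m_K - X_i)^{+}\, e^{-\sqrt{2d}(m_N - m_K - X_i)}.$$

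The remaining and main task is to establish convergence in distribution of $Z_{N,K}$ as $N \to \infty$ and then $K \to \infty$ to a positive random variable $Z_\infty$; this is the analog of convergence of the derivative martingale in the BRW/BBM setting \cite{aidekon,BDZ14}. It is handled by a Cauchy-in-distribution argument: the coarse field $X^K$ is compared, via another Gaussian interpolation in the spirit of \cite{BZ10,BDZ13}, to a modified branching random walk of depth $\log(N/K)$ whose derivative martingale is known to converge, and the limit is transferred back to $Z_{N,K}$. The hardest ingredient, as in \cite{BDZ13}, is controlling the small-probability contribution of boxes whose coarse-field history violates an entropic envelope; this requires barrier and truncated second-moment estimates that rely on Proposition~\ref{prop-maxrtfluc}, together with a sandwiching that inflates or deflates the coarse variance by an arbitrarily small constant to absorb the $O(1)$ mesoscopic slack allowed by (A.0)--(A.3). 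Combining the above pieces, $M_N - \E M_N$ converges in law to $\tfrac{1}{\sqrt{2d}}\log Z_\infty + G$, a randomly shifted Gumbel distribution.
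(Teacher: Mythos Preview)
Your two-scale decomposition with a direct Kahane--Slepian comparison has a genuine gap at the mesoscopic scales. Assumptions (A.2) and (A.3) only pin down the covariance for $|u-v|=O(1)$ and $|u-v|\asymp N$ respectively; at intermediate separations you have nothing better than the $O(1)$ control from (A.1). Consequently the covariance of your $\tilde\varphi=X^K+Y^K$ differs from that of $\varphi$ by a bounded but \emph{non-vanishing} amount on all mesoscopic pairs, and Slepian requires one-sided domination. Your proposed fix, ``inflating or deflating the coarse variance by an arbitrarily small constant'', cannot absorb an $O(1)$ discrepancy; one needs to pad by a \emph{fixed} variance $\sigma^2$ comparable to the slack. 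The paper handles exactly this via Lemma~\ref{lem-nfmax}: adding i.i.d.\ Gaussians of variance $\sigma^2$ on boxes of size $r$ or $N/r$ shifts the law of the centered maximum by precisely $\sigma^2\sqrt{d/2}$ (in the limit $r\to\infty$ after $N\to\infty$), so a finite padding costs only a deterministic shift. This is the step that makes the comparison go through (Lemma~\ref{lem-key-convergence}), and it is absent from your outline.

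There are two further structural issues. First, the paper uses a \emph{three}-scale decomposition (coarse field $\xi^c$ built from $\varphi_{KL}$, an explicit MBRW in the middle, and a bottom field $\xi^b$ built from $\varphi_{K'L'}$), precisely so that the fine field $\xi^f=\xi-\xi^c$ has enough explicit structure to prove the sharp tail asymptotic (Proposition~\ref{prop-rttailrefined}); your ``Bramson-type refinement of Proposition~\ref{prop-maxrtfluc}'' for a generic local field $Y^K$ is the entire difficulty and is not something one can cite. Second, your logic for the derivative-martingale limit is inverted relative to the paper: you propose to prove convergence of $Z_{N,K}$ by transferring the MBRW derivative martingale and then deduce convergence of the maximum, whereas the paper first proves that the approximating laws $\bar\mu_{K,L,K',L'}$ are independent of $N$ and hence Cauchy (Theorem~\ref{thm-conv2}), and only afterwards reads off convergence of $\mathcal Z_{K,L}$ from the identified Gumbel form \eqref{eq-gumbel1}. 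Your route would require an independent convergence result for the coarse-field functional that is not available in this generality.
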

As a byproduct of our proof, we also characterize the limiting law of 
$(M_N - m_N)$ as a Gumbel distribution with random shift, given by a positive 
random variable $\mathcal Z$
which is the weak limit of a sequence of a sequence 
${\mathcal Z}_N$, defined as
\begin{equation}\label{eq-def-derivative-martingale}
\mathcal Z_N = \sum_{v\in V_N} (\sqrt{2d} \log N - \varphi_{N, v}) e^{-\sqrt{2d}(\sqrt{2d} \log N - \varphi_{N, v})}\,.
\end{equation}
In the case of BBM, the corresponding 
sequence ${\mathcal Z}_N$ is precisely 
the derivative martingale, introduced in \cite{LS87}.
It also occurs in the case of BRW, see
\cite{aidekon}, and plays a similar role in the study of
critical Gaussian multiplicative chaos
\cite{SDals}. Even though in our case the sequence ${\mathcal Z}_N$
is not necessarily a martingale, in analogy with these previous 
situations we keep refering to it as \textit{the derivative martingale}. 
The definition naturally extends to a derivative martingale 
\textit{measure} on $V_N$ by setting, for $A\subset V_N$,
$$\mathcal Z_{N, A} = \sum_{v\in A} (\sqrt{2d} \log N - 
\varphi_{N, v}) e^{-\sqrt{2d}(\sqrt{2d} \log N - \varphi_{N, v})}.$$ 
\begin{theorem}\label{explicit-limit-law}
Suppose that Assumptions (A.0), (A.1), (A.2) and (A.3) hold. Then
the derivative martingale $\mathcal Z_N$ converges in law to a positive 
random variable $\mathcal Z$. In addition, the limiting law 
$\mu_\infty$ of $M_N - m_N$ can be expressed by 
$$\mu_\infty((-\infty, x]) = \E e^{-\beta^* \mathcal Z e^{-\sqrt{2d} x}}\,, \mbox{ for all } x\in \mathbb R\,,$$
where $\beta^*$ is a positive constant.
\end{theorem}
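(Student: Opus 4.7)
The plan is to bootstrap from Theorem \ref{thm-convergence} (which gives convergence of $M_N - \E M_N$) by combining it with a multi-scale coarse/fine-field decomposition of $\varphi_{N,\cdot}$ inherited from Assumptions (A.2)--(A.3). The output will be a joint description $(M_N - m_N, \mathcal Z_N)$ whose limit must take the form $(\tfrac{1}{\sqrt{2d}}\log(\beta^* \mathcal Z) + G, \mathcal Z)$ with $G$ an independent standard Gumbel; taking the marginal in $M_N - m_N$ yields the stated formula.

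First I would establish tightness of $\{\mathcal Z_N\}$ and positivity of every subsequential limit. Upper tightness is obtained by splitting the sum defining $\mathcal Z_N$ at $\varphi_{N,v} = \sqrt{2d}\log N$: on the low side a direct first-moment bound using (A.0) suffices, while the high side is controlled by Proposition \ref{prop-maxrtfluc}, which kills the right tail at rate $z e^{-\sqrt{2d}z}$ and absorbs the prefactor $(\sqrt{2d}\log N - \varphi_{N,v})$. Positivity of the limit (no escape to $0$) is tied to Theorem \ref{thm-convergence}: if $\mathcal Z_N \to 0$ in probability along a subsequence, then a barrier argument (only vertices with $\varphi_{N,v}$ slightly above $\sqrt{2d}\log N - O(1)$ can produce a near-maximum) forces $M_N - m_N \to -\infty$, contradicting the nondegeneracy of the limit of Theorem \ref{thm-convergence}.

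Next I would carry out the multi-scale step. Fix a large $K$, tile $V_N^\delta$ by sub-boxes $B_i$ of side $N/K$ centered at $x_i N$, and decompose $\varphi_{N,v} = X_i + Y^{(i)}_v$ for $v \in B_i$, where $X_i$ is a Gaussian with $\var X_i = \sqrt{2d}\log K + f(x_i) + O(1)$ (from (A.2)) whose joint law is dictated by $h$ and $f$ (from (A.3)), and $Y^{(i)}$ is the centered residual. A Gaussian comparison argument, in the spirit of \cite{BDZ13}, shows that on the event $\{|X_i| \leq C\}$ the residuals on distinct sub-boxes are asymptotically independent and each behaves, to within error $o_K(1)$, like the restriction of a log-correlated field on a box of side $N/K$. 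Applying Theorem \ref{thm-convergence} in each sub-box produces approximate Gumbel local maxima: $\max_{v \in B_i} Y^{(i)}_v = m_{N/K} + G_i + o(1)$, with $G_i$ approximately independent, each with the same as-yet-unknown limiting law $\nu$. The key input is that $\nu$ itself, when inserted into the global maximum $M_N = \max_i(X_i + m_{N/K} + G_i) + o(1)$, must reproduce the limit of Theorem \ref{thm-convergence}; a fixed-point argument (Gaussian max-stability forces $\nu$ to be of Gumbel type, and its scale parameter must equal $\sqrt{2d}$ from Assumption (A.0)) pins down $\nu$ up to a single scalar intensity $\beta^*>0$.

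Conditionally on $\{X_i\}$, independence of the $G_i$ gives
\[
\P(M_N \le m_N + x \mid \{X_i\}) \approx \prod_i \exp\!\bigl(-\beta^* e^{-\sqrt{2d}(x - X_i + \sqrt{2d}\log K)}\bigr) = \exp\!\Bigl(-\beta^* e^{-\sqrt{2d}x}\, S_{N,K}\Bigr),
\]
with $S_{N,K} := \sum_i K^{-2d} e^{\sqrt{2d} X_i}$. A separate computation (same first-moment/second-moment toolbox as in Step 1) identifies $S_{N,K}$, after restricting to the only $X_i$ that actually contribute (those within an $O(1)$ window of $\sqrt{2d}\log K$), as asymptotic to $\mathcal Z_N$ up to a multiplicative constant that can be absorbed into $\beta^*$; the factor $(\sqrt{2d}\log N - \varphi_{N,v})$ in \eqref{eq-def-derivative-martingale} emerges precisely from this critical truncation. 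Taking $N \to \infty$ then $K \to \infty$ and using tightness of $\mathcal Z_N$ gives joint convergence of $(\mathcal Z_N, M_N - m_N)$ to $(\mathcal Z, \tfrac{1}{\sqrt{2d}}\log(\beta^*\mathcal Z) + G)$, and hence the stated formula for $\mu_\infty$.

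The main obstacle is the derivative-martingale form of the limit. The unnormalized exponential sum $\sum_v e^{\sqrt{2d}\varphi_{N,v} - 2d\log N}$ is at the critical point of Gaussian multiplicative chaos and converges to $0$; the correct scale is the subcritical $\mathcal Z_N$ carrying the logarithmic prefactor. Justifying this requires a delicate truncation: restricting the sum $S_{N,K}$ to boxes whose coarse $X_i$ lies in an $O(1)$ window around $\sqrt{2d}\log K$, showing the complement contributes negligibly (via Proposition \ref{prop-maxrtfluc}), and verifying that the sum over the window is asymptotically equivalent to $\mathcal Z_N$ up to the universal constant absorbed into $\beta^*$. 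Unlike in BBM/BRW, we have no tree structure and no martingale property, so this identification must proceed entirely through the covariance asymptotics in (A.1)--(A.3) and through Gaussian comparison, which is where the bulk of the technical work will lie.
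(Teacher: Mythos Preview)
Your outline has the right architecture (coarse/fine split, condition on the coarse field, product formula), but two of the key steps are misidentified and would not go through as written.

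First, the claim that the local limit $\nu$ is a pure Gumbel is wrong. If the residual $Y^{(i)}$ is approximately a copy of the full field on a box of side $N/K$, then applying Theorem~\ref{thm-convergence} in each sub-box gives $G_i \Rightarrow \mu_\infty$, not a Gumbel --- you recover the very law you are trying to identify, and the argument is circular. A max-stability/fixed-point argument of the kind you sketch cannot resolve this, because the coarse variables $X_i$ are correlated (via $h$) and the resulting functional equation does not force $\nu$ to be Gumbel. What the paper uses instead is the precise right-tail asymptotic for the \emph{fine-field} maximum (Proposition~\ref{prop-rttailrefined}):
\[
\P\Bigl(\max_{v\in B} \xi^f_{N,v} \ge m_{N/KL} + z\Bigr) \sim \beta^*_{K',L'}\, z\, e^{-\sqrt{2d}z}\,.
\]
Conditioning on the coarse field $\{Z_i\}$ and multiplying over boxes gives, with $S_i := \sqrt{2d}\log(KL) - Z_i$, the factor $\beta^*(S_i + x)\, e^{-\sqrt{2d}(S_i+x)}$ inside each term of the product; on the good event $\min_i S_i \gg 1$ this reduces to $\beta^* S_i\, e^{-\sqrt{2d}(S_i+x)}$, and summing over $i$ produces the derivative-martingale expression directly.

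Second, and relatedly, your $S_{N,K} = \sum_i K^{-2d} e^{\sqrt{2d}X_i}$ is the critical chaos, not the derivative martingale, and the linear prefactor does \emph{not} emerge from a ``critical truncation'' of this sum to a window of $X_i$ near $\sqrt{2d}\log K$. It emerges from the linear factor $z$ in the tail asymptotic above. The paper arranges the approximating field so that the coarse part $\{Z_i\}$ has exactly the law of $\{\varphi_{KL,w_i}\}$; then $\sum_i S_i e^{-\sqrt{2d}S_i}$ is literally equal in law to $\mathcal Z_{KL}$ from~\eqref{eq-def-derivative-martingale}, with no truncation or asymptotic identification needed. Convergence of $\mathcal Z_N$ and positivity of $\mathcal Z$ then fall out of the convergence of $\bar\mu_{K,L,K',L'}$ (Theorem~\ref{thm-conv2}) rather than requiring the separate tightness step you propose.
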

\begin{remark}
In \cite{BL13}, \cite{BL14},
the authors used the convergence of the centered maximum,
a-priori information on the geometric properties of the
clusters of  
near-maxima of the DGFF and a beautiful invariance argument and derived
the convergence in law of the process of near extrema of
the two-dimensional DGFF, and its properties.
A natural extension of our work would be to study the
extremal process in the class of processes studied here, and tie it
to properties of the derivative martingale measure.
\end{remark}

\begin{remark}
  \label{rem-universal}
  Our proof will show that the random variable $\mathcal Z$ appearing in
  Theorem \ref{explicit-limit-law} depends only on the functions
  $f(x),h(x,y)$ appearing in Assumptions (A.2) and (A.3), while the
  constant $\beta^*$ depends on other parameters as well. 
  In particular, two sequences of fields that differ only at the
  microscopic level will have the same limit law for their centered
  maxima, up to a (deterministic) shift.
  We provide more details at the end of Section \ref{sec:approx}.
\end{remark}

\medskip

\noindent{\bf A word on proof strategy.} This paper
is closely related to \cite{BDZ13}, which
dealt with  2D GFF.
The proof in \cite{BDZ13} consists of three main steps: 
\begin{enumerate}
	\item
Decompose the DGFF to a sum of a coarse field and a fine field 
(which itself is a DGFF), and further approximate the fine field 
as a sum of modified branching random walk (see Section~\ref{sec:MBRW} 
for definition) and a local DGFF. It is crucial for the proof 
that the different 
components are independent of each other, and that the approximation
error is small enough so that the value of the maximum is not altered
significantly. These approximations were 
constructed using heavily the Markov field property of DGFF, and
detailed
estimates for the Green function of random walk.  
\item  Use a modified second moment method in order to compute the asymptotics 
	of the right  tail for the distribution of the maximum of
	the fine field,  as well as derive a limiting distribution 
	for the location of the maximizer in the fine field. 
\item Combine the limiting right tail estimates for the maximum of the
	fine field and the behavior of the coarse field to 
	deduce the convergence in law.  
	\end{enumerate}

In the general setup of logarithmically correlated fields,
it is not a priori clear how can one
decompose the field
by an (independent)
sum of a coarse field, an MBRW and a local field, as the Markov field 
property is no longer available. A natural approach under 
our assumptions
is to employ the self-similarity of the fields, and to approximate 
the coarse and local fields by an instance of $\{\varphi_{K, v}:  v\in V_K\}$ 
for some $K \ll N$. One difficulty in this attempt is to control the error 
of the approximation and its influence on the law of the maximum. 
In order to address this issue, we partition  the box $V_N$ to sub-boxes 
congruent to $V_L$, and borrow a key
idea from \cite{BL13} to show that the law of the 
maximum of a log-correlated fields has the following 
invariance property:
if one adds i.i.d.\ Gaussian variables with 
variance $O(1)$ to each sub-box of the field (here the same variable 
will be added to each vertex in the same sub-box), where the 
size $L$ of the sub-box is either $K$ or $N/K$ (assuming $K$ grows to 
infinity arbitrarily slow in $N$), then the law of the maximum for 
the perturbed field is simply a shift of the original law where the 
shift can be explicitly determined (see Lemma~\ref{lem-nfmax}). In light of 
this, in Subsection~\ref{sec:decomposition} we approximate the 
field $\{\varphi_{N, v}\}$ by 
the 
sum of coarse field (which is given by 
$\{\varphi_{KL, v}: v\in V_{KL}\}$), an MBRW, and a local field
(which is
given by independent copies of $\{\varphi_{K'L', v}: v\in V_{K'L'} \}$)
(here the parameters satisfy $N\gg K'\gg L'\gg K\gg L$). In this construction, 
we make sure that the error in the covariance between two vertices 
is $o(1)$ if their distance is not in between $L$ and $N/L'$, and the error is 
$O(1)$ otherwise. Then we apply Lemma~\ref{lem-nfmax} 
(and Lemma~\ref{lem-epsilon-domination}) to argue that our approximation
indeed recovers the law of the maximum for the original field. In 
Subsection~\ref{sec:limit-law}, we present the proof for the convergence in law
for the centered maximum of the approximated field we constructed and, as
in \cite{BDZ13},
it readily also yields the convergence in distribution for 
the derivative martingale constructed from the original field.  

As in the case of the DGFF in two dimensions,
a number of properties for the log-correlated fields are needed, and are
proved by adpating or modifying the arguments used in that case.
Those properties are:
\begin{enumerate}
	\item 
The tightness of $M_N - m_N$, and the
bounds on the right and left tails of $M_N - m_N$ as well as 
certain geometric properties of maxima for the log-correlated fields 
under consideration,  follow from modifying arguments in 
\cite{BZ10, DZ12, Ding11}. This is explained in Section~\ref{sec:tightness}.
\item Precise asymptotics for the rigth tail of the distribution of the maximum 
	of the fine field follow from arguments similar to
	\cite{BDZ13} with a number of simplifications,
	as our fine field has a nicer structure than its analogue 
	in \cite{BDZ13}, whereas 
	the coarse field employed in this paper 
	is constant over each box;  in particular,
	there is no need to consider the distribution for the location 
	of the maximizer in the fine field as done in \cite{BDZ13}. 
	The adaption is explained in the Appendix.
	\end{enumerate}

\noindent {\bf The role of Assumptions (A.2) and (A.3).}  We next construct
two examples that demonstrate that one cannot totally dispense of 
Assumptions (A.2) and (A.3). Since the examples are only ancillary to our main 
result, we will give only give a brief sketch for the verification of 
the claims made concerning these examples.

\begin{example} 
Fix $d=2$ and let $\{\varphi_{N, v}: v\in V_N\}$ be the DGFF on $V_N$ 
(normalized so that it satisfies Assumptions 
(A.0), (A.1), (A.2) and (A.3)), with
$Z_N=\max_{v\in V_N} \varphi_{N,v}$. 
Let $V_{N, 1}$ and $V_{N,2}$ be the left and right halves of the box $V_N$.
Let $\{\epsilon_{N, v}: v\in V_N\}$ and $X$ 
be i.i.d.\  standard Gaussian variables. 
Define 
$$\tilde \varphi_{N, v} = \left\{\begin{array}{ll}
	\varphi_{N, v} + \sigma X + \epsilon_{N, v}, &
	v\in V_{N, 1}\\
	 \varphi_{N, v}, & v\in V_{N, 2}
 \end{array}\right. , 
 \quad
\hat \varphi_{N, v} = \left\{\begin{array}{ll}
	\varphi_{N, v} + \sigma X , &
	v\in V_{N, 1}\\
	\varphi_{N, v}+\sigma_N'\epsilon_{N,v}, & v\in V_{N, 2}
 \end{array}\right. .
 $$
Set $\tilde M_N =  \max_{v\in V_N} \tilde \varphi_{N, v}$ and 
$\hat M_N =  \max_{v\in V_N} \hat \varphi_{N, v}$. We first
claim that there exist $\sigma'_N $ depending on $(N, \sigma)$ but bounded 
from above by an absolute constant such that $\E \tilde M_N = \E \hat M_N$.  
In order to see that, note that,  by Theorem~\ref{thm-tightness},
$$\E \tilde M_N \leq \E \max_{v\in V_{N/2}}  
 \varphi_{N, v} + \sigma \E \max(0, X) \leq 2 \log N - \tfrac{3}{4} \log \log N + \sigma \E \max(0, X)+O(1)\,,$$
where $O(1)$ is an error term independent of all parameters. n addition, 
by considering a $N/2$-box in the left side and dividing 
the right half box into two copies of $N/2$-boxes, one gets that
$$\E \hat M_N \geq \E \max (Z_{N/2} + \sigma X, Z'_{N/2} + \sigma'_N \epsilon', Z''_{N/2} + 
\sigma'_N \epsilon'') \geq \E Z_{N/2} + \frac12 \sigma'_N 
\E\max(\epsilon',\epsilon'')+ \sigma \E X \mathbf 1_{X\geq 0}\,.$$
where $Z_{N/2}, Z'_{N/2}, Z''_{N/2}$ are three independent
copies with law $\max_{v\in V_{N/2} }\varphi_{N, v}$ and 
$\epsilon' = \epsilon_{N, v_*^1}, \epsilon'' = \epsilon''_{N, v^*_2}$ 
(here $v_1^*$ and $v_2^*$ are the maximizers of the DGFF in the two 
$N/2$-boxes on the right half of $V_N$, respectively). The claim follows from
combining the last two displays.

Now, choose $\sigma$ to be a large fixed constant so that for
$0<\lambda < \log \log N$, 
\begin{eqnarray}
	\label{eq-paris1}
\P(\tilde M_N  \geq \E  Z_N+ \lambda) & \geq &
\P(\max_{v\in V_{N,1}} \{ \varphi_{N, v} + \sigma X + \epsilon_{N, v}\}  
\geq  \E Z_N + \lambda) \nonumber \\
& \geq &
\P((1 + 1/4\log N ) \max_{v\in V_{N, 1}} \{\varphi_{N, v} + \sigma X\} 
\geq \E Z_N + \lambda)\nonumber \\
&\geq &\P(\max_{v\in V_{N, 1}} \varphi_{N, v} + 
\sigma X \geq \E  Z_N +\lambda- 1/10)\,.
\end{eqnarray}
(Here, 
the second inequality is due to Slepian's comparison lemma 
(Lemma~\ref{lem-slepian}) and the fact that $\sigma$ is large, while the last
inequality uses that $2/(1+1/(4\log N))\leq 2-(\log N)/10$ for $N$ large.)
Further,
\begin{eqnarray}
	\label{eq-paris2}
\P(\hat M_N  \geq \E  Z_N + \lambda) & \leq &
\P(\max_{v\in V_{N, 1}} \varphi_{N, v}+ \sigma X   \geq  \E Z_N + \lambda) 
+ \P(\max_{v\in V_{N, 2}} \varphi_{N, v} + \epsilon'_{N, v} \geq  \E Z_N +
\lambda)\nonumber\\
&\leq &  \P(\max_{v\in V_{N, 1}} \varphi_{N, v}+ \sigma X   
\geq  \E Z_N + \lambda)  + O(1) \lambda e^{-2\lambda}\,,
\end{eqnarray}
where the last inequality follows from Proposition~\ref{prop-maxrtfluc}. 
Combining \eqref{eq-paris1} and \eqref{eq-paris2}
and using the form of the
limiting right tail of the two-dimensional DGFF as in 
\cite[Proposition 4.1]{BDZ13}, one obtains 
that for $\lambda, \sigma$ sufficiently large but independent of $N$,
$$\limsup_{N\to \infty}\P(\tilde M_N  \geq \E  Z_N+ \lambda) \geq 
(1 + c) \limsup_{N\to \infty}\P(\hat M_N  \geq \E  Z_N + \lambda)  
\geq c(\sigma) \lambda e^{-2\lambda}\,,$$
where $c>0$ is an absolute constant and $c(\sigma)$ satisfies 
$c(\sigma) \to _{\sigma\to \infty} \infty$. This implies that the 
laws of $\tilde M_N-EM_N$ and $\hat M_N-E\hat M_N$ do
not coincide in the limit  $N\to \infty$.

 Finally,  set $\bar \varphi_{N, v} = \tilde \varphi_{N, v}$  
 for all $v\in V_{N}$ and odd $N$, and $\bar \varphi_{N, v} = 
 \hat \varphi_{N, v}$  for all $v\in V_{N}$ and even $N$. 
One then sees that
the sequence of Gaussian fields $\{\bar \varphi_{N, v}: v\in V_N\}$ 
satisfies Assumptions (A.0), (A.1) and (A.3) (while not satisfying
(A.2)), but the law of the centered maximum does not converge. 
\end{example}

\begin{example}
Let $\{\varphi_{N, v}: v\in V_N\}$ be
a sequence of Gaussian fields satisfying (A.0), (A.1) and (A.2),
such that the law of the centered maximum converges. 
Consider the fields $\{\tilde \varphi_{N, v}: v\in V_N\}$ where 
$\tilde \varphi_{N, v} = \varphi_{N, v} + \mathbf 1_{N \mbox{ is even}} X_N$ 
with $X_N$ a sequence of i.i.d. standard Gaussian variables. Then,
the field
$\{\tilde \varphi_{N, v}: v\in V_N\}$ satisfies (A.0), (A.1) and (A.2) 
(possibly increasing the values of $\alpha^{(\delta)}$ by 
1 for all $0\leq \delta\leq 1$). 
However, the centered law of  the maximum of 
$\{\tilde \varphi_{N, v}: v\in V_N\}$ cannot converge. 
\end{example}

\section{Expectation and tightness for the maximum} \label{sec:tightness}
This section is devoted to the proofs of 
Proposition~\ref{prop-maxrtfluc} and 
Theorem~\ref{thm-tightness}, and to an auxiliary lower bound on the right
tail of the distribution of the maximum, see
Lemma \ref{lem-righttail}.
The proof of the proposition is
very similar to the proof in the case of the DGFF in dimension two, 
using a comparison with an appropriate BRW; Essentially, the proposition gives 
the correct right tail behavior of the distribution of the
maximum. In contrast, given the 
proposition, in order to prove 
Theorem~\ref{thm-tightness}, one needs an 
upper bound on the \textit{left} tail of that distribution.
In the generality of this work, one cannot hope for 
a universal sharp estimate on the left tail, as witnessed
by the drastically different left tails exhibited in the cases
of the modified branching random walk and the two-dimensional DGFF, see
\cite{Ding11}. We will however provide the following 
universal \textit{upper} bound 
for the decay of the left tail.
\begin{lemma}\label{lem-GFlefttail}
Under Assumption (A.1) there exist constants $C, c >0$ 
(depending only on $\alpha_{1/10}, d$) so that for all $n \in \mathbb{N}$ 
and $ 0 \leqslant \lambda \leqslant (\log n)^{2/3}$, 
$$ \P (\max_{v \in V_N} \varphi_{N,v} \leqslant  m_N - \lambda ) \leqslant Ce^{-c\lambda}\,.$$
\end{lemma}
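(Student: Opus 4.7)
My plan is to adapt the argument used for the two-dimensional DGFF in \cite{BZ10, DZ12, Ding11} to the generality of Assumption~(A.1). The key step is a Gaussian comparison of $\{\varphi_{N,v}\}$ with a $d$-dimensional modified branching random walk (MBRW) $\{\psi_{N,v}\}$, whose left tail decays exponentially thanks to its tree structure, after which a multi-scale induction on the MBRW closes the bound.

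First, I construct an MBRW $\{\psi_{N,v}: v\in V_N\}$ (as in \cite[Section~2]{BZ10}, extended to general $d$) with covariance $\Cov(\psi_u, \psi_v) = \log N-\log_+|u-v|+C_0$ for a suitable $C_0 = C_0(\alpha^{(1/10)}, d)$ large enough that, by Assumption~(A.1), $\Cov(\psi_u,\psi_v)\ge \Cov(\varphi_u,\varphi_v)$ for all $u\ne v\in V_N^{1/10}$. (The extra $+C_0$ can be realized by adding a common Gaussian to the MBRW.) To equate variances I add an i.i.d.\ centered Gaussian field $\zeta_v$ of variance $t_v \le 2C_0$ to $\varphi$, chosen so that $\V(\varphi_v+\zeta_v)=\V\psi_v$. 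Then Slepian's lemma (Lemma~\ref{lem-slepian}) applies to the pair $(\varphi+\zeta,\psi)$ (which now have equal variances and $\Cov(\varphi+\zeta)\le\Cov(\psi)$ off-diagonal) and yields
\[
\P\bigl(\max_{v\in V_N^{1/10}}(\varphi_v+\zeta_v)\le t\bigr)\le \P\bigl(\max_{v\in V_N^{1/10}}\psi_v\le t\bigr).
\]
Since the $\zeta_v$ are i.i.d.\ with variance $O(1)$ over polynomially many vertices, $\max_v|\zeta_v|\le C_1\sqrt{\log N}$ with probability at least $1-N^{-d}$. Combined with Proposition~\ref{prop-maxrtfluc}, which ensures the true maximizer of $\varphi$ lies in $V_N^{1/10}$ with overwhelming probability, this reduces the problem to proving the same exponential left tail bound for $\max_{v\in V_N}\psi_v$, at the price of replacing $\lambda$ by $\lambda-C_1\sqrt{\log N}$. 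This is still within the acceptable range because $\sqrt{\log N}=o((\log N)^{2/3})$ and the loss can be absorbed into the constants $C,c$ in the final bound.

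For the MBRW I exploit its hierarchical decomposition $\psi_v=\sum_{k=1}^{\log_2 N}X^{(k)}_v$, in which $X^{(k)}_v$ is constant on dyadic sub-boxes of side $2^{-k}N$ and is independent across distinct sub-boxes at the same scale. A multi-scale induction from classical BRW theory (see, e.g., \cite{Bachmann, aidekon} in the BRW setting, or \cite[Proposition~3.1]{Ding11} for the DGFF) yields the claimed exponential left-tail bound on $\max_v\psi_v$: at each scale one allocates a fraction of the deficit $\lambda$ and gains an exponential factor from the independence across sub-boxes, with the total over $\log_2 N$ scales balancing to $Ce^{-c\lambda}$ for $0\le\lambda\le(\log N)^{2/3}$.

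The main obstacle is the Gaussian comparison step, because the covariance discrepancy $\Cov(\varphi)-\Cov(\psi)$ allowed by (A.1) need not be of a definite sign, preventing a direct Slepian comparison between $\varphi$ and $\psi$ themselves. The workaround above---inflating the MBRW covariances by a common Gaussian of variance $C_0$ and then matching variances on the $\varphi$-side by i.i.d.\ perturbations---is standard in spirit but requires careful tracking of constants to ensure the $\sqrt{\log N}$ perturbation cost is absorbed into the exponential bound for $\lambda\le(\log N)^{2/3}$. A secondary point is the $d$-dimensional MBRW construction with the precise covariance kernel above, which is essentially routine but needs some bookkeeping.
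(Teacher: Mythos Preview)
Your comparison step contains a genuine gap. You add an \emph{i.i.d.} Gaussian field $\zeta_v$ (with bounded variances) to $\varphi$ in order to match variances with $\psi$, and then absorb the perturbation via the bound $\max_v|\zeta_v|\le C_1\sqrt{\log N}$. Unwinding, this gives
\[
\P\Bigl(\max_{v}\varphi_{N,v}\le m_N-\lambda\Bigr)\;\lesssim\;\P\Bigl(\max_{v}\psi_{N,v}\le m_N-\lambda+C_1\sqrt{\log N}\Bigr)+N^{-d}\,.
\]
Feeding in the MBRW left-tail bound $\P(\max\psi\le m_N-\mu)\le Ce^{-c\mu}$ then produces $Ce^{-c(\lambda-C_1\sqrt{\log N})}=Ce^{cC_1\sqrt{\log N}}e^{-c\lambda}$, and the prefactor $e^{cC_1\sqrt{\log N}}$ is \emph{not} bounded in $N$. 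In particular, for $\lambda$ of order $1$ up to order $\sqrt{\log N}$ the bound is vacuous, and for larger $\lambda$ it cannot be rewritten as $C'e^{-c'\lambda}$ with $N$-independent $C'$. The assertion that ``the loss can be absorbed into the constants $C,c$'' is therefore incorrect: the $\sqrt{\log N}$ shift is multiplicative in the exponential, not additive in the constants.

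The paper circumvents this by a different variance-matching device. Instead of i.i.d.\ noise, it adds a \emph{single} common Gaussian $X$ with vertex-dependent but uniformly bounded coefficients $a_v$, i.e.\ it compares $\varphi_{2^\kappa N,2^\kappa v}+a_vX$ with $\mathcal{S}_{2^{2\kappa}N,v}$ on a dilated lattice (the dilation by $2^\kappa$ is what creates the room to make both the variance and the increment inequalities go the right way). Because $X$ is shared and $\sup_v a_v=O(1)$, one has $\sup_v |a_vX|\le C|X|$, and hence the perturbation costs only a constant fraction of $\lambda$ (e.g.\ $\lambda\mapsto\lambda/2$) rather than $\sqrt{\log N}$. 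After Slepian, an FKG step passes from $\max_{V_N^{1/10}}\mathcal{S}$ to $\max_{V_{2^{2\kappa}N}}\mathcal{S}$, and then the MBRW left-tail lemma finishes. Your plan can be salvaged by replacing the i.i.d.\ $\zeta_v$ with this shared-Gaussian construction; the rest of your outline (reducing to an MBRW and then using its hierarchical/independent-subbox structure to get the exponential left tail) is in line with the paper, though the paper's MBRW argument proceeds via a direct Slepian comparison to independent MBRWs on sub-boxes of scale $N'\asymp Ne^{-c\lambda}$ rather than a multi-scale induction.
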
 
\noindent
Theorem \ref{thm-tightness} follows at once from
Proposition~\ref{prop-maxrtfluc} and Lemma~\ref{lem-GFlefttail}.

Later, we will need the following complimentary lower bound on the right tail.
\begin{lemma}\label{lem-righttail} Under Assumption (A.1), there exists a 
	constant $C>0$ depending only on $(\alpha_0, \alpha^{(1/10)}, d)$ 
	such that for all $\lambda \in [1,\sqrt{\log N}]$,
\begin{equation*}\P(M_N > m_N + \lambda ) \geq C^{-1}\lambda e^{-\sqrt{2d}\lambda}\,.
\end{equation*}
\end{lemma}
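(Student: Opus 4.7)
The plan is to prove the lower bound via a modified (barrier-restricted) second moment method on a sparse subset of $V_N^{1/10}$, adapting to general log-correlated fields the arguments used for the MBRW in \cite{BZ10, Ding11} and for the two-dimensional DGFF in \cite{DZ12, BDZ13}. The factor $\lambda$ in the bound is the signature of the barrier: it comes from a Gaussian bridge / ballot estimate which shows that a scale-by-scale trajectory constrained to stay below an appropriate linear threshold occurs with conditional probability of order $\lambda/\log N$.

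Concretely, I would fix a sparse grid $A\subset V_N^{1/10}$ with $|A|$ of order $N^d$. For each $v\in A$ I would use dyadic box-averages of $\varphi_N$ around $v$ at scales $2^{-k}N$, $k=0,1,\dots,\lfloor\log_2 N\rfloor$, to extract a scale decomposition $\varphi_{N,v}\approx\sum_k\Delta_k(v)$ in which the $\Delta_k(v)$ are approximately independent centered Gaussians of variance $\log 2+O(1)$; both the approximate independence and the variance come directly from Assumption (A.1). Writing $S_k(v)=\sum_{j\le k}\Delta_j(v)$, define
$$G_v=\bigl\{\varphi_{N,v}\in [m_N+\lambda,\,m_N+\lambda+1]\bigr\}\cap \bigl\{S_k(v)\le\tfrac{k}{\log_2 N}(m_N+\lambda)+\lambda+C_0,\ 1\le k\le\log_2 N\bigr\}.$$
A one-dimensional Gaussian tail estimate using (A.1) yields $\P(\varphi_{N,v}\in[m_N+\lambda,m_N+\lambda+1])\ge c_1 N^{-d}(\log N)e^{-\sqrt{2d}\lambda}$ for $\lambda\le\sqrt{\log N}$; and a standard random-walk ballot estimate shows that, conditionally on $\varphi_{N,v}$ lying in that interval, the barrier condition occurs with probability at least $c_2\lambda/\log N$. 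Setting $X=\#\{v\in A:G_v\}$ we thus obtain $\E X\ge c\lambda e^{-\sqrt{2d}\lambda}$.

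For the second moment we have $\E X^2=\E X+\sum_{u\ne v}\P(G_u\cap G_v)$. Splitting the off-diagonal sum according to $r=|u-v|$ and using the hierarchical covariance $\Cov(\varphi_u,\varphi_v)=\log N-\log_+ r+O(1)$ from (A.1), together with the barrier constraints baked into $G_u$ and $G_v$, the standard ``modified second moment'' bound (cf.\ \cite{BZ10, DZ12}) gives $\sum_{u\ne v}\P(G_u\cap G_v)\le C\E X$. Hence $\E X^2\le C\E X$, and the Paley--Zygmund inequality yields $\P(M_N>m_N+\lambda)\ge \P(X\ge 1)\ge (\E X)^2/\E X^2\ge c'\lambda e^{-\sqrt{2d}\lambda}$, which is the claim.

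The main obstacle is the absence of an exact tree or Markov structure underlying $\varphi_N$: the scale increments $\Delta_k(v)$ are only \emph{approximately} independent, with $O(1)$ errors coming from the slack in Assumption (A.1). Propagating these errors through both the conditional ballot estimate and the pairwise second-moment bound is where the bulk of the technical work lies; however the arguments exactly parallel those in \cite{BDZ13, BZ10, Ding11, DZ12}, and apply here with only cosmetic modifications because Assumption (A.1) supplies precisely the covariance structure these arguments require.
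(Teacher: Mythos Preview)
Your approach differs substantially from the paper's, which is much shorter. The paper does not run the barrier second-moment method on $\varphi_N$ at all: it first uses a Slepian comparison (Lemma~\ref{lem-compare-righttail-lower}) to show $\P(M_N\ge\lambda)\ge\tfrac12\P(\max_{v\in V_{2^{-\kappa}N}}\mathcal S_{2^{-\kappa}N,v}\ge\lambda)$, and then invokes the known right-tail bound for the MBRW (Lemma~\ref{lem-righttailMBRW}). All the barrier/ballot work is done on the MBRW side, where exact hierarchical independence is available, and is transferred to $\varphi_N$ in one stroke by Gaussian comparison. This buys a one-line proof given those two lemmas.

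Your proposal has a concrete gap. Under Assumption~(A.1) alone, dyadic box averages $A_k(v)$ satisfy $\Cov(A_j(v),A_k(v))=(j\wedge k)\log 2+O(1)$, so the increments $\Delta_k(v)=A_k(v)-A_{k-1}(v)$ have $\var\Delta_k(v)=\log 2+O(1)$ while $\Cov(\Delta_j(v),\Delta_k(v))=O(1)$ for $j\ne k$; the cross-covariances are of the \emph{same order} as the variances, so the $\Delta_k$ are not ``approximately independent'' in any sense strong enough to run the conditional ballot estimate. The references you invoke do not address this: \cite{BZ10,DZ12} work with the MBRW, which has exact independence across scales, and the DGFF arguments in \cite{Ding11,BDZ13} rely on the Gibbs--Markov property to produce genuinely orthogonal decompositions. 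For a general field satisfying only (A.1) there is no such mechanism, and ``cosmetic modifications'' of those arguments will not suffice. One could try to rescue the scheme by coarsening scales into blocks of length $C\gg1$ so that variances dominate the $O(1)$ covariance errors, but this is real additional work you have not outlined---and it is exactly what the paper's Slepian-comparison route avoids.
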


 \subsection{Branching random walk and modified branching random walk}\label{sec:MBRW}
 
The study of extrema for 
log-correlated Gaussian fields is possible because they exhibit an approximate
tree structure and can be efficiently compared with branching random walk and 
the modified branching random walk  
introduced in \cite{BZ10}. In this subsection,
we briefly review the definitions of BRW and MBRW in $\mathbb Z^d$.  
 
Suppose $N=2^n$ for some $n \in \mathbb{N}$. 
For $j=0,1,\ldots, n$, define $\mathcal{B}_j$ to be the
set of $d$-dimensional cubes of side length $2^j$ with corners in 
$\mathbb{Z}^d$. Define $\mathcal{BD}_j$ to be those elements of 
$\mathcal{B}_j$ which are of the form $\left( [0,2^j -1]\cap \mathbb{Z} 
\right)^d + (i_1 2^j, i_2 2^j,\ldots, i_d 2^j )$, where 
$i_1, i_2, \ldots, i_d$ are integers. 
For $x \in V_N$, define $\mathcal{B}_j (x)$ to be those 
elements of $\mathcal{B}_j$ which contains $x$. Define 
$\mathcal{BD}_j (x)$ similarly. 

Let $\{ a_{j,B} \}_{j \ge 0, B \in \mathcal{BD}_j}$ be
a family of i.i.d.\ Gaussian variables of variance $\log 2$. Define the
\textit{branching random walk} (BRW) $\{ \mathcal{R}_{N,z} \}_{z \in V_N}$ by 
$$ \mathcal{R}_{N, z} = \sum_{j=0}^n \sum_{B \in \mathcal{BD}_j (z)} a_{j,B}\,,
\quad
 z\in V_N\,.$$ 

Let $\mathcal{B}_j^N$  be the subset of $\mathcal{B}_j$ consisting of 
elements of the latter with lower left corner in $V_N$. 
Let $\{ b_{j,B} : j \ge 0, B \in \mathcal{B}_j^N\}$ be 
a family of independent Gaussian variables such that 
$ \var b_{j,  B} = \log 2\cdot  2^{-dj}$ for all 
$B\in \mathcal B_j^N$. Write $B\sim_N B'$ if $B = B'+ (i_1 N, \ldots, i_d N)$ 
for some integers $i_1, \ldots, i_d \in \mathbb Z$. Let 
\begin{eqnarray*}
 b_{j,B}^N = \begin{cases}
b_{j,B} \qquad  B \in \mathcal{B}_j^N\,,\\
b_{j,B'} \qquad B \sim_N B' \in \mathcal{B}_j^N\,.
\end{cases}
\end{eqnarray*}
Define the \textit{modified branching random walk} (MBRW) 
$\{ \mathcal{S}_{N,z} \}_{z \in V_N}$ by
\begin{equation}\label{eq-def-MBRW} 
\mathcal{S}_{N, z} = \sum_{j=0}^n \sum_{B \in \mathcal{B}_j (z)} b_{j,B}^N\,,\quad
z\in V_N\,.
\end{equation}
The proof of the following lemma is an straightforward adaption of 
\cite[Lemma 2.2]{BZ10} for dimension $d$, which we omit.
\begin{lemma}\label{lem-covapp} There exists a constant $C$ depending only on $d$ such that for $N=2^n$ and $x,y \in V_N$
\begin{equation*}\label{eq-covapp}|\Cov(\mathcal{S}_{N,x}, \mathcal S_{N,y})-(\log N- \log( |x-y|_N \vee 1))| \le C\,, \end{equation*}
where $|x-y|_N=\min_{y'\sim_N y} |x-y'|$.
\end{lemma}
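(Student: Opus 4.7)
The plan is to expand the covariance scale by scale using the independence of the weights $b_{j,B}^N$ across scales (and within a scale), count the pairs $(B_1,B_2)\in \mathcal{B}_j(x)\times\mathcal{B}_j(y)$ with $B_1\sim_N B_2$ (since those are the only ones contributing), and collapse the resulting geometric sum over $j$ to $\log N-\log(|x-y|_N\vee 1)+O(1)$. Concretely, the starting identity is
\[
\Cov(\mathcal{S}_{N,x},\mathcal{S}_{N,y})=\sum_{j=0}^n\log 2\cdot 2^{-dj}\cdot N_j(x,y),
\]
where $N_j(x,y):=\#\{(B_1,B_2):B_1\in\mathcal{B}_j(x),B_2\in\mathcal{B}_j(y),B_1\sim_N B_2\}$.

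The next step is a geometric count of $N_j(x,y)$. For $j\le n-1$, every box $B\in\mathcal{B}_j$ has side $2^j<N$, so each $\sim_N$-class has at most one representative containing a given point; hence $N_j(x,y)$ equals the number of integer lower corners $c\in\mathbb{Z}^d$ for which the cube $[c,c+2^j-1]^d$ contains both $x$ and the $\sim_N$-representative $y^\ast$ of $y$ nearest to $x$. A one-coordinate computation gives $N_j(x,y)=\prod_{i=1}^d\max(2^j-\delta_i,0)$, with $\delta_i:=|x_i-y^\ast_i|$ and $r:=|x-y|_N=(\sum_i\delta_i^2)^{1/2}$. For $j=n$, a direct check gives $N_n(x,y)=N^d$, contributing exactly $\log 2$.

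With these counts in hand, I would split the sum at $j_0:=\lceil\log_2(r\vee 1)\rceil$. For $j\ge j_0$ (where $2^j\gtrsim r$), the product satisfies
\[
2^{-dj}\prod_{i=1}^d(2^j-\delta_i)=\prod_{i=1}^d\!\Big(1-\tfrac{\delta_i}{2^j}\Big)=1+O\!\Big(\tfrac{d\,r}{2^j}\Big),
\]
so these scales contribute $\log 2\cdot(n-j_0+1)+O(1)=\log N-\log(r\vee 1)+O(1)$ once the geometric tail $\sum_{j\ge j_0}r/2^j=O(1)$ is summed. For $j<j_0$, the product vanishes as soon as some $2^j<\delta_i$, and for the $O(1)$ transition scales with $\max_i\delta_i\le 2^j<r$ each term is bounded by $\log 2$, contributing $O(1)$ in total. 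Combining yields the claimed bound with $C=C(d)$.

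The only mildly delicate point is the elementary product estimate $\prod_i(1-\delta_i/2^j)=1+O(dr/2^j)$ for $2^j\ge 2r$, plus the tacit verification that at scale $j<n$ no second $\sim_N$-representative of $y$ fits into any cube of side $2^j$ containing $x$; both are routine, and the entire argument is a dimension-$d$ transcription of the bookkeeping in \cite[Lemma 2.2]{BZ10}, with no new analytic obstacle.
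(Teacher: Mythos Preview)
Your proposal is correct and is precisely the dimension-$d$ adaptation of \cite[Lemma~2.2]{BZ10} that the paper invokes (and omits); the paper's own hidden computation uses the same per-scale product $\prod_i(1-\delta_i/2^j)$ and the same split at $j_0\asymp\log_2 r$. The only cosmetic point to watch is that at $j=n$ the product formula $\prod_i\max(2^j-\delta_i,0)$ undercounts, but you already handle this scale separately and the discrepancy is $O(1)$, so nothing changes.
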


In the rest of the paper, we assume that the constants  
$\alpha_0, \alpha^{(\delta)}$ in Assumptions (A.0) and (A.1) are taken
large enough so that the MBRW satisfies the assumptions.

\remove{
\begin{proof} For $x=(x_1,x_2, \ldots, x_d)$ and $y=(y_1, y_2, \ldots, y_d)$, we define for $i=1,2,\ldots,d$, $t_i (x,y) =\min (|x_i - y_i |, |x_i - y_i -N|, |x_i - y_i +N|)$. Then we have 
\begin{eqnarray*}
{\bf R}_{\mathcal{S}^N}(x,y) &=& \sum_{ k= \lceil  \log_2 (d^N_{\infty} (x,y)+1) \rceil}^n 2^{-dk} \prod_{i=1}^d[2^k - t_i (x,y)] \\
&=& \sum_{ k= \lceil  \log_2 (d^N_{\infty} (x,y)+1) \rceil}^n  \prod_{i=1}^d\left[1 - \frac{t_i (x,y)}{2^k}\right]  \\
\end{eqnarray*}
Now the term inside the summation is less than 1 always. So, we have that 
\begin{equation}{\bf R}_{\mathcal{S}^N}(x,y)  \le n - \log_2 (d^N_{\infty} (x,y)+1) + 2 \le n-\log_2 (d^N (x,y)+1) + 3 \nonumber \end{equation}
On the other hand, using the fact that 
\begin{eqnarray*}
&&\sum_i a_i -\sum_{i_1 \neq i_2}a_{i_1} a_{i_2}  + \sum_{i_1 \neq i_2 \neq i_3} a_{i_1} a_{i_2}a_{i_3} -\cdots - (-1)^da_1 a_2 \cdots a_d \\
&\le& \sum_i a_i + \sum_{i \neq j \neq k} a_i a_j a_k +\cdots \\ 
&\le& d + \binom{d}{3} + \cdots \\
&\le& 2^d
\end{eqnarray*}
we have
\begin{eqnarray*}
{\bf R}_{\mathcal{S}^N}(x,y) &\ge& n -  \log_2 (d^N_{\infty} (x,y)+1) -  \sum_{ k= \lceil  \log_2 (d^N_{\infty} (x,y)+1) \rceil}^n 2^d \times 2^{-k}d^N_{\infty} (x,y)  \\
&\ge& n -  \log_2 (d^N (x,y)+1) -  C  
\end{eqnarray*}
So we are done. 
We have that 
\begin{align}\label{eq-covcomp}
|\Cov(\mathcal{S}_u^{N}, \mathcal{S}_v^{N})- \left(n- \log_2 d^N(u,v)\right)|\le C \qquad \forall u, v \in V_N \,\\
|\Cov(\varphi_{4N, u}, \varphi_{4N, v})- \rho \left(n- \max(0,\log_2 ||u-v||)\right)|\le C \qquad \forall u, v \in (2N,2N)+V_N\,
\end{align}
\end{proof}
}

\subsection{Comparison of right tails}
The following Slepian's comparison lemma for Gaussian processes 
\cite{slepian62} will be useful.
\begin{lemma}\label{lem-slepian}
Let $\mathcal{A}$ be an arbitrary finite index set and let $\{ X_a : a \in \mathcal{A}\}$ and $\{ Y_a : a \in \mathcal{A}\}$ be two centered Gaussian processes such that: $\E(X_a - X_b)^2 \ge \E(Y_a - Y_b)^2$, for all $a,b \in \mathcal{A}$ and $\V(X_a) = \V (Y_a)$ for all $a \in \mathcal{A}$. Then $\P(\max_{a \in \mathcal{A}} X_a \ge \lambda) \ge \P(\max_{a \in \mathcal{A}} Y_a \ge \lambda)$ for all $\lambda \in \mathbb{R}$.
\end{lemma}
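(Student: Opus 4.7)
\textbf{Proof plan for Lemma \ref{lem-slepian}.} The plan is to use the standard Gaussian interpolation (smart path) method. Without loss of generality realize $\{X_a\}$ and $\{Y_a\}$ on the same probability space with $X$ independent of $Y$, and define the interpolation
\begin{equation*}
Z_a(t) = \sqrt{t}\, X_a + \sqrt{1-t}\, Y_a, \qquad t \in [0,1],\ a \in \mathcal{A},
\end{equation*}
so that $Z(0) = Y$, $Z(1) = X$, and the covariance satisfies
\begin{equation*}
\Cov(Z_a(t), Z_b(t)) = t\, \Cov(X_a, X_b) + (1-t)\, \Cov(Y_a, Y_b).
\end{equation*}
Since $\V(X_a) = \V(Y_a)$, the marginal variances of $Z_a(t)$ do not depend on $t$, and the hypothesis $\E(X_a - X_b)^2 \ge \E(Y_a - Y_b)^2$ translates into $\Cov(X_a, X_b) \le \Cov(Y_a, Y_b)$ for all $a \ne b$. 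The goal will be to show that $t \mapsto \P(\max_a Z_a(t) \ge \lambda)$ is nondecreasing.

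Next, for $\epsilon > 0$ pick a smooth nondecreasing function $\phi_\epsilon : \mathbb{R} \to [0,1]$ approximating $\mathbf{1}\{\cdot > 0\}$, and set $F_\epsilon(z) = \prod_{a \in \mathcal{A}} \phi_\epsilon(\lambda - z_a)$, which is a smooth approximation to $\mathbf{1}\{\max_a z_a < \lambda\}$. I would then differentiate under the expectation and apply Gaussian integration by parts (Stein's lemma) in the form $\E[W g(Z(t))] = \sum_b \Cov(W, Z_b(t))\, \E[\partial_b g(Z(t))]$ to the random variables $W = \dot Z_a(t) = \tfrac{1}{2\sqrt{t}} X_a - \tfrac{1}{2\sqrt{1-t}} Y_a$. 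This yields the identity
\begin{equation*}
\frac{d}{dt}\, \E[F_\epsilon(Z(t))] = \frac{1}{2} \sum_{a,b \in \mathcal{A}} \bigl(\Cov(X_a, X_b) - \Cov(Y_a, Y_b)\bigr)\, \E[\partial_a \partial_b F_\epsilon(Z(t))].
\end{equation*}

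The crucial sign check: the equal-variance hypothesis makes the diagonal terms $a = b$ vanish, while for $a \ne b$ one has
\begin{equation*}
\partial_a \partial_b F_\epsilon(z) = \phi_\epsilon'(\lambda - z_a)\, \phi_\epsilon'(\lambda - z_b) \prod_{c \ne a,b} \phi_\epsilon(\lambda - z_c) \ge 0,
\end{equation*}
since $\phi_\epsilon$ is nondecreasing and nonnegative. Combined with $\Cov(X_a, X_b) - \Cov(Y_a, Y_b) \le 0$ for $a \ne b$, this forces $\frac{d}{dt} \E[F_\epsilon(Z(t))] \le 0$, so $\E[F_\epsilon(Z(1))] \le \E[F_\epsilon(Z(0))]$. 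Finally, letting $\phi_\epsilon \uparrow \mathbf{1}\{\cdot > 0\}$ (using bounded convergence and the fact that $\mathcal{A}$ is finite, so boundary events have measure zero at continuity points) gives $\P(\max_a X_a < \lambda) \le \P(\max_a Y_a < \lambda)$, which is the desired conclusion.

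The only delicate point is the smoothing/limit argument: one needs to ensure that differentiating under the expectation is justified (follows from the compact support of $\phi_\epsilon'$ and Gaussian tail bounds on $Z(t)$), and that passing to the limit recovers the indicator statement for all $\lambda$. This is a completely standard technicality and presents no real obstacle.
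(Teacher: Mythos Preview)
Your proof is correct: the Gaussian interpolation (smart path) argument you outline is the standard modern proof of Slepian's comparison lemma, and all the sign checks and the smoothing/limit step go through exactly as you describe. Note, however, that the paper does not actually supply a proof of this lemma; it is stated as a classical result with a citation to Slepian's original paper \cite{slepian62}, so there is no ``paper's proof'' to compare against.
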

The next lemma compares the right tail for the maximum of $\{\varphi_{N, v}: v\in V_N\}$ to that of a BRW.
\begin{lemma}\label{lem-righttailcomp} Under Assumption (A.0), there exists an integer $\kappa = \kappa(\alpha_0) > 0$ such that for all $N$ and $\lambda \in \mathbb{R}$ and any subset $A\subseteq V_N$
\begin{equation}\label{eq-righttailcompBRW} 
\P(\max_{v \in A} \varphi_{N, v} \ge \lambda) \le  2\P(\max_{v \in 2^{\kappa} A }  \mathcal{R}_{2^{\kappa}N, v}\ge \lambda )\,.
 \end{equation}
\end{lemma}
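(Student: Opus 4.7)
The plan is to apply Slepian's Gaussian comparison (Lemma~\ref{lem-slepian}) to couple the field $\{\varphi_{N,v}\}_{v\in A}$ with the inflated BRW $\{\mathcal{R}_{2^\kappa N,\,2^\kappa v}\}_{v\in A}$, and to produce the factor $2$ on the right-hand side by a simple symmetrization trick.

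Concretely, for $v\in A$ set $X_v := \mathcal{R}_{2^\kappa N,\,2^\kappa v}$, which has constant variance $(n+\kappa+1)\log 2$ in $v$. Introduce a family of centered Gaussians $\{\xi_v\}_{v\in A}$, mutually independent and independent of $\{\varphi_{N,v}\}$, with $\var \xi_v := \var X_v - \var \varphi_{N,v}$; by Assumption~(A.0) the choice $\kappa\ge \alpha_0/\log 2$ makes these variances nonnegative. Set $Y_v := \varphi_{N,v}+\xi_v$, so that $\var Y_v = \var X_v$. With variances matched, the Slepian condition $\E(X_u-X_v)^2 \ge \E(Y_u-Y_v)^2$ reduces to the covariance inequality $\Cov(X_u,X_v) \le \Cov(\varphi_{N,u},\varphi_{N,v})$. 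The BRW covariance is computed directly from its dyadic structure as $\Cov(X_u,X_v) = \log(2N) - D^\circ(u,v)\log 2$, where $D^\circ(u,v)$ denotes the smallest $j\ge 0$ for which $u,v$ share a common box in $\mathcal{BD}_j$ (the inflation by $2^\kappa$ contributes $\kappa\log 2$ of purely independent coarse-scale variance to each $X_v$, since the vertices $2^\kappa v$ lie on a sparse sublattice and hence are separated at every level $j<\kappa$). On the $\varphi$ side, Assumption~(A.0) yields the lower bound $\Cov(\varphi_{N,u},\varphi_{N,v}) \ge \max(\var \varphi_{N,u},\var \varphi_{N,v}) - \log_+|u-v| - 2\alpha_0$. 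Taking $\kappa = \kappa(\alpha_0,d)$ large enough to absorb the additive constants and the Euclidean-vs.-dyadic distance slack, this gives the required covariance comparison, and hence $\P(\max_A X_v \ge \lambda) \ge \P(\max_A Y_v \ge \lambda)$ by Lemma~\ref{lem-slepian}.

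Finally, the factor $2$ is produced by symmetrization. Let $v_*$ be a vertex in $A$ at which $\varphi_{N,\cdot}$ attains its maximum, with ties broken in some measurable way. Because $\xi_{v_*}$ is a centered Gaussian independent of $\{\varphi_{N,v}\}_{v\in A}$, one has $\P(\xi_{v_*}\ge 0\mid\varphi) = 1/2$; on the event $\{\xi_{v_*}\ge 0\}\cap\{\max_A\varphi_{N,v}\ge \lambda\}$ we have $Y_{v_*}\ge \varphi_{N,v_*}\ge \lambda$, so $\P(\max_A Y_v\ge \lambda)\ge \tfrac12\P(\max_A\varphi_{N,v}\ge \lambda)$, which combined with the Slepian step above yields the claim. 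The main obstacle is the covariance comparison: since (A.0) only provides an upper bound on $\var\varphi_{N,v}$, the $\varphi$-side covariance lower bound is weakest when the variances are far from $\log N$, and the role of the inflation parameter $\kappa$ is precisely to supply enough independent coarse-scale variance to $X_v$ to dominate that weakest lower bound uniformly in the variance profile of $\varphi$.
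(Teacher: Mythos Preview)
Your Slepian step has a genuine gap. With \emph{independent} correctors $\xi_v$, the condition $\E(X_u-X_v)^2\ge \E(Y_u-Y_v)^2$ (after matching variances) is, as you say, equivalent to $\Cov(X_u,X_v)\le \Cov(\varphi_{N,u},\varphi_{N,v})$. But your own computation shows that $\Cov(X_u,X_v)=\log(2N)-D^\circ(u,v)\log 2$ does \emph{not} depend on $\kappa$: the $\kappa$ extra levels contribute only to the diagonal, never to off-diagonal covariances, since distinct points $2^\kappa u,2^\kappa v$ are separated at every level $j<\kappa$. On the other side, Assumption~(A.0) gives only an \emph{upper} bound on $\var\varphi_{N,v}$, so your lower bound $\Cov(\varphi_{N,u},\varphi_{N,v})\ge \max(\var\varphi_{N,u},\var\varphi_{N,v})-\log_+|u-v|-2\alpha_0$ can be arbitrarily negative. (Take the extreme case $\varphi\equiv 0$, which satisfies (A.0); then all $\varphi$-covariances vanish while $\Cov(X_u,X_v)>0$ for nearby $u,v$.) Thus no choice of $\kappa$ rescues the covariance inequality, contrary to your final paragraph.

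The paper's device is to use a \emph{single} common Gaussian $X$ and set $Y_v=\varphi_{N,v}+a_vX$ with $a_v^2=\var X_v-\var\varphi_{N,v}$. Then $\E(Y_u-Y_v)^2=\E(\varphi_{N,u}-\varphi_{N,v})^2+(a_u-a_v)^2$, and since $(a_u-a_v)^2\le|a_u^2-a_v^2|=|\var\varphi_{N,u}-\var\varphi_{N,v}|$, the second clause of (A.0) gives $\E(Y_u-Y_v)^2\le 2\log_+|u-v|+4\alpha_0$ with \emph{no} appeal to a variance lower bound. Meanwhile $\E(X_u-X_v)^2=2(\kappa+D^\circ(u,v))\log 2\ge 2\log_+|u-v|+2\kappa\log 2-C_0$, so now $\kappa$ genuinely enters and can be chosen to dominate $4\alpha_0+C_0$. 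Your symmetrization for the factor $2$ is fine (and indeed the paper uses the same idea, via $\{X\ge 0\}$), but the comparison step needs the common-noise construction rather than independent $\xi_v$'s.
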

\begin{proof}   For $\kappa\in \mathbb N$, consider the map 
\begin{equation}\label{eq-def-psi}
\psi_N = \psi_N^{(\kappa)} : V\mapsto 2^\kappa V\mbox{ such that }\psi_N(v) = 2^\kappa v\,.
\end{equation} 
By Assumption (A.0), we can choose a sufficiently large $\kappa$ depending on $\alpha_0$ such that $\V(\varphi_{N, v})\le \V(\mathcal{R}_{2^\kappa N, \psi_N(v)})$ for all $v  \in V_N$. So, we can choose a collection of positive numbers 
$$a_v^2 = \var \mathcal R_{2^\kappa N, \psi_N(v)} - \var\varphi_{N, v}\,,$$ such that $\V(\varphi_{N,v}+a_v X)= \V(\mathcal{R}_{2^\kappa N, \psi_N(v)})$ for all $v  \in V_N$, where $X$ is  a standard Gaussian random variable, independent of everything else. Since the BRW has constant variance over all vertices, we get that 
$$\E(\varphi_{N,u}+a_u X-\varphi_{N,v}-a_v X)^2 \le \E(\varphi_{N,u}-\varphi_{N,v})^2 + (a_v - a_u)^2 \leq  \E(\varphi_{N,u}-\varphi_{N,v})^2 + |\var \varphi_{N, v} - \var\varphi_{N, u}| \,.$$
Combined with Assumption (A.0), it yields that
$$\E(\varphi_{N,u}+a_u X-\varphi_{N,v}-a_v X)^2  \leq  2 \log_+|u-v| + 4 \alpha_0\,.$$
Since $\E(\mathcal{R}_{2^\kappa N, \psi_N(u)}-\mathcal{R}_{2^\kappa N, \psi_N(v)})^2 - 2 \log_+ |u-v| \geq \log 2 \kappa - C_0$ (where $C_0$ is an absolute constant),  we can choose sufficiently large $\kappa$ depending only on $\alpha_0$ such that 
$$\E(\varphi_{N,u}+a_u X-\varphi_{N,v}-a_v X)^2 \le 
\E(\mathcal{R}_{2^\kappa N, \psi_N(u)}-\mathcal{R}_{2^\kappa N, \psi_N(v)})^2\,, \mbox{ for all } u, v \in V_N\,.$$
Combined with Lemma~\ref{lem-slepian}, it gives that for all $\lambda \in \mathbb{R}$ and $A\subseteq V_N$
$$\P(\max_{v \in A} \varphi_{N,v} +a_v X \ge \lambda ) \le \P(\max_{v \in A} \mathcal{R}_{2^\kappa N, \psi_N(v)} \ge \lambda)\,.$$
In addition, by independence and symmetry of $X$ we have
\begin{equation*}
\P(\max_{v \in A} \varphi_{N, v} +a_v X \ge \lambda ) \ge \P(\max_{v \in A} \varphi_{N,v} \ge \lambda, X\ge 0 )  
= \tfrac{1}{2} \P(\max_{v \in A} \varphi_{N, v} \ge \lambda )  \,.
\end{equation*}
This completes the proof of the desired  bound. 
\remove{
        Now we shall try to prove the first part of the inequality. The approach taken is the same as that taken for proof of the second part. Here we use the function $\psi_{2^{-\kappa}N}$. Then for all $v \in V_{2^{-\kappa}N}$ we have that $\psi_{2^{-\kappa}N}(v) \in V_N$. As in the proof for the second part, we get that that there exists $C>0$ an universal constant such that 
$$|\V(\varphi_{N, \psi_{2^{-\kappa}N}(u)})-\V(\varphi_{N, \psi_{2^{-\kappa}N}(v)})| \le C, ~~\forall~ u, v \in V_{2^{-\kappa}N}$$
Here again we can choose a sufficiently large $\kappa$ not depending on $N$ so as to make $\V(\varphi_{N, \psi_{2^{-\kappa}N}(v)})\ge \V(\mathcal{S}_{v}^{2^{-\kappa}N})$ for all $v  \in V_{2^{-\kappa}N}$. So we can choose  a collection of positive numbers $\{ a_v' : v \in V_{2^{-\kappa}N} \}$ such that $\V(\varphi_{N, \psi_{2^{-\kappa}N}(v)})=\V(\mathcal{S}_{v}^{2^{-\kappa}N}+a_v' X)$, for all $v  \in V_{2^{-\kappa}N}$. Then we again notice that $|a_v' - a_u'| \le C$ for all $u, v \in V_{2^{-\kappa}N}$. So following the proof of the second part of the inequality we see that for sufficiently large $\kappa$ independent of $N$ we have
$$\E(\varphi_{\psi_{2^{-\kappa}N}}(u)-\varphi_{\psi_{2^{-\kappa}N}}(v))^2 \ge   \E(\mathcal{S}_{u}^{2^{-\kappa}N}+a_u' X-\mathcal{S}_{v}^{2^{-\kappa}N}-a_v' X)^2\quad \forall u, v \in V_{2^{-\kappa}N}$$
Here we again apply Lemma \ref{lem-slepian} to get that
\begin{eqnarray*}
\P(\max_{v \in V_{2^{-\kappa}N}} \varphi_{N, \psi_{2^{-\kappa}N}} \ge \lambda ) &\ge& \P(\max_{v \in V_{2^{-\kappa}N}} (\mathcal{S}_{v}^{2^{-\kappa}N}+a_v' X) \ge \lambda )  \\
&\ge&  \P(\max_{v \in V_{2^{-\kappa}N}} \mathcal{S}_{v}^{2^{-\kappa}N} \ge \lambda, X \ge 0)  \\
 &=& \frac{1}{2} \P( \max_{v \in V_{2^{-\kappa}N}} \mathcal{S}_{v}^{2^{-\kappa}N} \ge \lambda  )  \\
\end{eqnarray*}
So we are done. }
\end{proof}

\begin{proof}[Proof of Proposition~\ref{prop-maxrtfluc}]
An analogous statement was proved in \cite[Lemma 3.8]{BDZ13} for the case of 
2D DGFF. In the proof of \cite[Lemma 3.8]{BDZ13}, the desired inequality was 
first proved for BRW on the 2D lattice and then deduced for 
2D DGFF applying \cite[Lemm 2.6]{DZ12}, which is the analogue of Lemma 
\ref{lem-righttailcomp} above. The argument for BRW in \cite[Lemma 3.8]{BDZ13} 
carries out (essentially with no change) from dimension two to dimension $d$. 
Given that, an application of Lemma~\ref{lem-righttailcomp} 
completes the proof of the proposition.
\end{proof}

A complimentary lower bound on the right tail is also available.
\begin{lemma}\label{lem-compare-righttail-lower}
Under Assumption (A.1), there exists an integer $\kappa = \kappa(\alpha^{(1/10)})  > 0$ such that for all $N$ and $\lambda \in \mathbb{R}$ 
\begin{equation}\label{eq-righttailcomp}  \P(\max_{v \in V_N} \varphi_v^N \ge \lambda) \geq  \tfrac{1}{2} \P( \max_{v \in V_{2^{-\kappa}N} } \mathcal{S}_{2^{-\kappa}N, v}\ge \lambda )\,.
 \end{equation}
\end{lemma}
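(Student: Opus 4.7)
The plan is to mirror the proof of Lemma~\ref{lem-righttailcomp} with the roles of the two fields reversed: we exhibit an MBRW on $V_{2^{-\kappa}N}$ (plus an i.i.d.\ Gaussian noise that equalizes variances) as being Slepian-dominated by $\{\varphi_{N,\cdot}\}$ restricted to a carefully placed sub-grid of $V_N^{1/10}$, and then conclude by independence and symmetry of the added noise.

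To this end we first fix an injection $\psi: V_{2^{-\kappa}N}\hookrightarrow V_N^{1/10}$ whose image is an affinely spaced sub-grid with minimum pairwise distance at least $c_d\cdot 2^\kappa$ for some $c_d>0$ depending only on $d$. A routine packing argument shows this is possible as soon as $\kappa$ is bounded below, since $|V_N^{1/10}|\asymp N^d$ greatly exceeds $|V_{2^{-\kappa}N}|=(2^{-\kappa}N)^d$. Applying Assumption (A.1) at $\delta=1/10$ to the pair $(\psi(u),\psi(v))$, and using $|\psi(u)-\psi(v)|\ge c_d 2^\kappa |u-v|$, we obtain $\var\varphi_{N,\psi(v)}\ge \log N-\alpha^{(1/10)}$ together with the increment lower bound
$$\E(\varphi_{N,\psi(u)}-\varphi_{N,\psi(v)})^2 \ge 2\kappa\log 2 + 2\log_+|u-v| - O(1),$$
while Lemma~\ref{lem-covapp} gives $\var \mathcal{S}_{2^{-\kappa}N,v}=\log(2^{-\kappa}N)+O(1)$ and $\E(\mathcal{S}_{2^{-\kappa}N,u}-\mathcal{S}_{2^{-\kappa}N,v})^2\le 2\log_+|u-v|+O(1)$.

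We next choose $\kappa=\kappa(\alpha^{(1/10)},d)$ large enough that $\var\varphi_{N,\psi(v)}\ge \var\mathcal{S}_{2^{-\kappa}N,v}$ for all $v$, set $a_v:=(\var\varphi_{N,\psi(v)}-\var\mathcal{S}_{2^{-\kappa}N,v})^{1/2}\ge 0$, and take an independent standard Gaussian $X$ so that $\V(\mathcal{S}_{2^{-\kappa}N,v}+a_vX)=\V(\varphi_{N,\psi(v)})$. A short computation using $|a_v^2-a_u^2|\le 2\alpha^{(1/10)}$ and $a_v\gtrsim\sqrt{\kappa\log 2}$ shows $(a_v-a_u)^2=O(1/\kappa)$, so the increments of $\mathcal{S}_{2^{-\kappa}N,\cdot}+a_\cdot X$ are bounded above by $2\log_+|u-v|+O(1)$. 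For $\kappa$ sufficiently large, the surplus $2\kappa\log 2$ on the $\varphi$-side absorbs all the $O(1)$ errors, so the $\varphi$ increments dominate uniformly in $u,v$, and Lemma~\ref{lem-slepian} applies.

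Slepian's comparison then yields $\P(\max_{v\in V_{2^{-\kappa}N}} \varphi_{N,\psi(v)}\ge\lambda)\ge \P(\max_v (\mathcal{S}_{2^{-\kappa}N,v}+a_vX)\ge\lambda)$, and by independence of $X$ and the symmetry $\P(X\ge 0)=1/2$, the right-hand side is at least $\tfrac12\P(\max_v \mathcal{S}_{2^{-\kappa}N,v}\ge\lambda)$. Combined with $\max_{v\in V_N}\varphi_{N,v}\ge \max_{v\in V_{2^{-\kappa}N}}\varphi_{N,\psi(v)}$ this gives the claim. The only genuinely nontrivial point is the sub-grid construction in the first step, which is needed so that Assumption (A.1) at scale $1/10$ applies to every image pair; once this is granted, the remainder is a direct replay of the proof of Lemma~\ref{lem-righttailcomp} with the two processes interchanged.
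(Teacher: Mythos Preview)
Your proof is correct and follows essentially the same route as the paper: the paper's proof simply restricts attention to $V_N^{1/10}$ and invokes ``an argument analogous to that used in the proof of Lemma~\ref{lem-righttailcomp}'' with the roles of the two fields reversed, which is precisely what you have written out in detail. The only cosmetic difference is that you phrase the embedding as a packing argument, whereas the natural choice (implicit in the paper and in the analogous construction in the proof of Lemma~\ref{lem-GFlefttail}) is simply the affine map $v\mapsto \lfloor c\,2^\kappa\rfloor v + \lfloor N/10\rfloor\cdot(1,\dots,1)$ for a suitable constant $c<4/5$.
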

\begin{proof}  It suffices to consider $M^{(1/10)}_N = \max_{v\in V_N^{1/10}} 
	\varphi_{N, v}$. By Assumption (A.1) and an argument analogous
	to that used in the proof of Lemma~\ref{lem-righttailcomp} 
	(which can be raced back to the proof of \cite[Lemma 2.6]{DZ12}), one
	deduces that for $\kappa = \kappa(\alpha^{(1/10)})$, 
$$\P(M^{(1/10)}_N \geq \lambda) \geq  \tfrac{1}{2} \P( \max_{v 
	\in V_{2^{-\kappa}N} } \mathcal{S}_{2^{-\kappa}N, v}\ge \lambda ) 
	\mbox{ for all } \lambda \in \mathbb R\,.$$
This completes the proof of the lemma.
\end{proof}
We also need the 
following estimate on the right tail for MBRW in $d$-dimension. 
The proof is a routine
adaption of the proof of \cite[Lemma 3.7]{DZ12} to arbitrary dimension, and
is omitted.
\begin{lemma}\label{lem-righttailMBRW}
There exists an absolute constant $C>0$ such that for all $\lambda\in [1, \sqrt{\log n}]$, we have
\begin{equation*}C^{-1}\lambda e^{-\sqrt{2d}\lambda} \le \P(\max_{v\in V_N} \mathcal S_{N, v} > m_N + \lambda ) \le C\lambda e^{-\sqrt{2d}\lambda}\,.
\end{equation*}
\end{lemma}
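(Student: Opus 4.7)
The plan is to follow the two-dimensional argument of \cite[Lemma 3.7]{DZ12} essentially line by line, with $\sqrt{2\log 2}$ replaced by $\sqrt{2d}$ throughout and all ``area'' counts $N^2$ replaced by $N^d$. The starting observation is that for each fixed $z\in V_N$, the random variable $\mathcal{S}_{N,z}$ is a sum of $n+1$ independent centered Gaussians $b_{j,B_j(z)}^N$ with $\var b_{j,B_j(z)}^N = \log 2$ (each box of scale $2^j$ contains exactly $2^{dj}$ vertices, and by Lemma~\ref{lem-covapp} the total variance is $\log N + O(1)$), so the ``ancestral'' partial sums $S_j(z)=\sum_{k\geq j} b_{k,B_k(z)}^N$ behave like a Gaussian random walk of length $n=\log_2 N$.

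For the upper bound, I would apply a truncated first moment. Call $v$ \emph{good} if $\mathcal{S}_{N,v}\in [m_N+\lambda, m_N+\lambda+1]$ and if the ancestral path $S_j(v)$ stays below the straight-line barrier $\sqrt{2d}\,(n-j)+\lambda+1$ for every intermediate scale $j$. A standard Gaussian ballot/Brownian-bridge computation (Bramson-type) gives
\[
\P(v\text{ is good}) \;\asymp\; \frac{\lambda}{(\log N)^{3/2}}\cdot N^{-d} \cdot e^{-\sqrt{2d}\lambda},
\]
with the $\lambda/(\log N)^{3/2}$ factor coming from entropic repulsion of a random walk below a line of slope $\sqrt{2d}$. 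Summing over the $N^d$ vertices of $V_N$ yields an expected number of good vertices of order $\lambda e^{-\sqrt{2d}\lambda}$, and a separate union bound (as in \cite[Lemma 3.7]{DZ12}) shows that the probability of an untruncated maximum above $m_N+\lambda$, i.e. one whose path \emph{violates} the barrier, is negligible compared to this. Together with Markov's inequality this gives the required upper bound.

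For the matching lower bound I would invoke the Paley--Zygmund inequality applied to the counting variable $Z=\#\{v\in V_N: v\text{ is good}\}$. The first moment is already computed, so only the second moment has to be controlled. Splitting pairs $(u,v)\in V_N^2$ according to the scale $j_0=\log_2|u-v|_N$ at which their ancestral boxes first coincide, the hierarchical variance structure yields $\Cov(\mathcal{S}_{N,u},\mathcal{S}_{N,v})=n-j_0+O(1)$; after integrating over $j_0$ with the appropriate volume factor $2^{dj_0}$ per vertex $u$ at scale $j_0$ from $v$, one verifies (exactly as in dimension two) that $\E Z^2 \leq C \,\E Z$, so Paley--Zygmund forces $\P(Z\geq 1)\gtrsim \lambda e^{-\sqrt{2d}\lambda}$, as desired.

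The only serious book-keeping is in the truncated second moment estimate, where one must check that the volume $2^{dj_0}$ of ``candidate partners'' for $v$ at separation scale $j_0$ does not spoil the computation—this is the point where the exponent $\sqrt{2d}$ and the correction $\frac{3}{2\sqrt{2d}}\log\log N$ in $m_N$ must conspire exactly so that the dominant diagonal $j_0=O(1)$ contribution matches $\E Z$. In the two-dimensional case of \cite[Lemma 3.7]{DZ12} this happens because $2^{2j_0}\cdot e^{-2\sqrt{2}\cdot \sqrt{2d}(n-j_0)+\cdots}$ telescopes correctly; the same telescoping occurs for any $d$ with the replacement $2\mapsto d$ everywhere. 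I expect this to be the main technical nuisance, but no new idea is required beyond arbitrarily tracking the dimension through the calculation.
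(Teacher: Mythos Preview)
Your proposal is correct and matches the paper's approach exactly: the paper omits the proof entirely, stating only that it is ``a routine adaption of the proof of \cite[Lemma 3.7]{DZ12} to arbitrary dimension,'' which is precisely what you outline. One minor notational slip: in the MBRW the level-$j$ contribution is not a single $b_{j,B_j(z)}^N$ but the sum $\sum_{B\in\mathcal B_j(z)} b_{j,B}^N$ over the $2^{dj}$ boxes of scale $2^j$ containing $z$, each with variance $\log 2\cdot 2^{-dj}$; the total level-$j$ variance is indeed $\log 2$ as you say, so the random-walk picture for the ancestral partial sums and the rest of the argument go through unchanged.
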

\begin{proof}[Proof of Lemma~\ref{lem-righttail}]
Combine Lemma~\ref{lem-compare-righttail-lower} and Lemma~\ref{lem-righttailMBRW}.
\end{proof}

\subsection{An upper bound on the left tail}

This subsection is devoted to the proof of Lemma~\ref{lem-GFlefttail}. The proof
consists of two steps: (1) a derivation of an exponential  upper bound on the 
left tail for the MBRW; (2) a comparison
of the left tail for  general log-correlated Gaussian field to that of the MBRW. 
\begin{lemma}\label{lem-MBRWlefttail}
There exist constants $C, c >0 $ so that for all $n \in \mathbb{N}$ and 
$ 0 \leqslant \lambda \leqslant (\log n)^{2/3}$, 
$$\P (\max_{v \in V_N} \mathcal{S}_{N,v} \leqslant m_N - \lambda ) \leqslant Ce^{-c\lambda}\,.$$
\end{lemma}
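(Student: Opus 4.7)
The plan is to exploit the approximate self-similarity and the hierarchical independence structure of the MBRW, turning the right-tail lower bound of Lemma~\ref{lem-righttailMBRW} into an exponential left-tail bound via independence across sub-boxes. Concretely, I would decompose $\mathcal{S}_{N,v}=T_k(v)+F_k(v)$ where $T_k$ collects the coarse scales $j\ge n-k$ and $F_k$ the fine scales $j<n-k$, and partition $V_N$ into $m:=2^{dk}$ sub-boxes $Q_1,\dots,Q_m$ of side $2^{n-k}$ aligned with $\mathcal{B}_{n-k}$. Since every scale-$j$ box appearing in $F_k$ has side $<2^{n-k}$ and, up to the toroidal identification, sits inside a single $Q_i$, the restrictions $F_k|_{Q_i}$ are mutually independent, each equal in law to an MBRW $\mathcal{S}_{2^{n-k}}$; and $T_k$ is independent of all of them. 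A standard Dudley chaining argument using Lemma~\ref{lem-covapp} shows that the oscillation of $T_k$ on any $Q_i$ has sub-Gaussian tails at scale $O(1)$, uniformly in $i$ and $n$.

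Given this decomposition, set $T_i^-:=\inf_{v\in Q_i}T_k(v)$. Conditioning on $T_k$ and using independence of the fine fields,
\begin{equation*}
\P\bigl(\max_{V_N}\mathcal{S}_N\le m_N-\lambda\mid T_k\bigr)\le\prod_{i=1}^m\P\bigl(\max_{Q_i}F_k|_{Q_i}\le m_N-\lambda-T_i^-\bigr).
\end{equation*}
By Lemma~\ref{lem-righttailMBRW} (applied at $\lambda=1$) there exists a universal $c_0>0$ such that any $i$ with $T_i^-\ge m_N-m_{2^{n-k}}-\lambda$ contributes a factor $\le 1-c_0$. Thus on the event
\begin{equation*}
\mathcal{E}:=\bigl\{\#\{i:T_i^-\ge m_N-m_{2^{n-k}}-\lambda\}\ge m/2\bigr\},
\end{equation*}
the conditional probability is at most $(1-c_0)^{m/2}$. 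Choosing $k=\lceil C_1\log\lambda\rceil$ for $C_1$ large enough forces $(1-c_0)^{m/2}\le e^{-c\lambda}$, so the proof reduces to showing $\P(\mathcal{E}^c)\le Ce^{-c\lambda}$.

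For the coarse bound, write $T_k^{\mathrm{ctr}}(i):=T_k(z_i)$ for a fixed central vertex $z_i\in Q_i$. By Lemma~\ref{lem-covapp} the process $(T_k^{\mathrm{ctr}}(i))_{i=1}^m$ is essentially an MBRW on $V_{2^k}$, and a short computation using \eqref{eq-thm-expectation} gives $m_N-m_{2^{n-k}}=m_{2^k}+O(\log k)$, so the threshold lies below the typical maximum of $T_k^{\mathrm{ctr}}$ by $\lambda+O(\log k)$. The event that fewer than $m/2$ of the $T_k^{\mathrm{ctr}}(i)$ exceed this threshold is then a large-deviation event for a log-correlated Gaussian count; I would bound it by Gaussian concentration applied to the 1-Lipschitz (in the underlying i.i.d.\ $b$-variables) functional that counts exceedances, after a median comparison. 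The oscillation estimate from the chaining bound lets me pass from $T_k^{\mathrm{ctr}}(i)$ to $T_i^-$ by absorbing an $O(1)$ shift into the threshold. The restriction $\lambda\le(\log n)^{2/3}$ enters here to ensure that $k=O(\log\lambda)\ll n$, so $T_k$ has variance $\gg 1$ and the coarse-scale MBRW picture is nondegenerate.

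The main obstacle is the Step~4 count: I need a positive fraction of the coarse values to stay above the threshold with probability $1-Ce^{-c\lambda}$. A naive Chernoff bound fails because the $T_k^{\mathrm{ctr}}(i)$ are strongly correlated, and the sharp $e^{-c\lambda}$ rate (rather than $e^{-c\lambda^2}$) is essential. This is exactly the obstacle addressed (in the DGFF context) in \cite{Ding11,DZ12,BDZ13}, and the argument transfers with only notational changes because the covariance structure of $T_k^{\mathrm{ctr}}$ matches that of an MBRW on $V_{2^k}$ up to $O(1)$.
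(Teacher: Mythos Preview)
Your decomposition has a genuine gap at the independence step. The MBRW sums over \emph{all} integer-corner boxes containing $v$, not just dyadically aligned ones (see the definition of $\mathcal{B}_j$ and \eqref{eq-def-MBRW}). Hence for $j<n-k$ a scale-$j$ box straddling the boundary between adjacent sub-boxes $Q_i,Q_{i'}$ contributes the same variable $b_{j,B}^N$ to both $F_k|_{Q_i}$ and $F_k|_{Q_{i'}}$; the fine fields are \emph{not} independent across your partition, and your justification (``sits inside a single $Q_i$'') is simply false. This can be repaired by thinning to sub-boxes at pairwise $\ell_\infty$-distance $\ge 2^{n-k}$, but then $F_k|_{Q_i}$ is still not literally an MBRW on a box of side $2^{n-k}$ (the torus period is $N$), so Lemma~\ref{lem-righttailMBRW} does not apply without a further comparison. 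Your Step~4 is also unjustified as written: the number of indices $i$ with $T_k^{\mathrm{ctr}}(i)$ above a threshold is not a Lipschitz function of the underlying Gaussians, so Borell--TIS concentration does not yield the claimed bound, and the cited references do not in fact prove such a counting estimate.

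The paper's proof sidesteps both issues by applying Slepian's lemma from the outset rather than attempting an exact decomposition. It takes $N'\asymp N e^{-c\lambda}$, selects $\gtrsim e^{c'\lambda}$ sub-boxes of side $N'$ at pairwise distance $\ge 2N'$, and compares $\mathcal{S}_N$ on their union with the field $\tilde{\mathcal{S}}_{N,\cdot}=g_{N',\cdot}+\phi$, where the $g_{N'}$ are \emph{by construction} independent MBRWs on the sub-boxes and $\phi$ is a \emph{single} common Gaussian of variance $\log(N/N')$. The covariance inequality required for Slepian is immediate, independence is exact, and the entire ``coarse'' contribution reduces to the elementary bound $\P(\phi\le -\lambda/2)$. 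This is both shorter and avoids any need to control a fraction-of-exceedances event for a correlated field.
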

\begin{proof}
A trivial extension of the
arguments in \cite{BZ10} (for the MBRW in dimension two) yields the tightness 
for the maximum of the MBRW in dimension $d$ arounds its expectation, with 
the latter 
given by \eqref{eq-thm-expectation}. Therefore, there exist  
constants $\kappa , \beta> 0$ such that for all $N  \geq 4$, 
\begin{align}
\P(\max_{v \in V_N} \mathcal{S}_{N,v} \geqslant m_N - \beta ) 
\geqslant 1/2\,. \label{eq-lefttaillb}
\end{align}
In addition, a simple calculation gives that for all $N\geq N'\geq 4$ (adjusting
the value of $\kappa$ if necessary),
\begin{equation}
\sqrt{2d} \log (N/N') - \frac{3}{4d} \log(\log N/ \log N') - 
\kappa \leqslant m_N - m_{N'} \leqslant \sqrt{2d} \log (N/N') + 
\kappa \,. \label{eq-mean-difference}
\end{equation}
Let $\lambda'=\lambda/2$ and $N'=N\exp(-\frac{1}{\sqrt{2d}}(\lambda'- \beta - 
\kappa - 4))$. By \eqref{eq-mean-difference}, one has $m_N - m_{N'} 
\leqslant \lambda' - \beta$. Divide $V_N$  into disjoint boxes of side 
length $N'$, and consider a maximal collection $\mathcal B$ of $N'$-boxes 
such that all the pairwise distances are
at least $2N'$, implying that
 $|\mathcal B| \geq \exp(\frac{\sqrt{d}}{\sqrt{2}}(\lambda'- \beta - \kappa - 8- 4\sqrt{d}))$.  Now consider the  modified MBRW
$$\tilde{\mathcal{S}}_{N, v} = g_{N', v} + \phi \quad\forall v \in B \in \mathcal{B}\,,$$
where 
$\phi$ is an zero mean Gaussian variable with variance  $\log (N/N')$ 
and $\{ g_{N', v} : v \in B \}_B$ are the MBRWs defined on the boxes $B$,
independently of each other and  of $\phi$.
It is straightforward to check that
$$\var \mathcal S_{N, v} = \var \tilde {\mathcal S}_{N,v} \mbox{ and } \E \mathcal S_{N, v}\mathcal S_{N, u} \leq \E \tilde {\mathcal S}_{N, v}\tilde {\mathcal S}_{N,u} \mbox{ for all } u, v \in \cup_{B\in \mathcal B} B\,.$$
Combined with Lemma~\ref{lem-slepian}, it gives that
\begin{equation}\label{eq-left-tail-compare}
\P (\max_{v \in V_N} \mathcal{S}_{N,v} \leq t) \leq  \P (\max_{v \in \cup_{B\in \mathcal B} B} \mathcal{S}_{N, v} \leq t) \leq \P (\max_{v \in \cup_{B\in \mathcal B} B} \tilde {\mathcal{S}}_{N, v} \leq t) \mbox{ for all } t\in \mathbb R\,.
\end{equation}
By \eqref{eq-lefttaillb}, one has that for each $B\in \mathcal B$,
$$
\P (\sup_{v \in B} g_{N', v} \geq m_N - \lambda') 
= \P (\sup_{v \in B} g_{N', v} \geq m_{N'} + m_N - m_{N'}- \lambda') 
\geq \P (\sup_{v \in B} g_{N', v} \geq m_{N'} - \beta) \geqslant 
\frac{1}{2}\,,$$
and therefore
$$\P (\sup_{v \in \cup_{B\in \mathcal B}B} g_{N', v} < m_N - \lambda') \leq  (\tfrac12)^{|\mathcal B|}\,.$$
Thus,
$$\P(\max_{v \in \cup_{B\in \mathcal B} B} \tilde {\mathcal{S}}_{N,v} \leq m_N - \lambda) \leq \P (\sup_{v \in \cup_{B\in \mathcal B}B} g_{N', v} < m_N - \lambda') + \P(\phi \leq - \lambda') \leq C \mathrm{e}^{-c \lambda'}\,,$$
for some constants $C, c>0$. 
Combined with \eqref{eq-left-tail-compare}, this completes the proof of the lemma.
\end{proof}

\begin{proof}[Proof of Lemma~\ref{lem-GFlefttail}]
In order to prove Lemma~\ref{lem-GFlefttail}, we will compare the maximum of a  sparsified version of the log-correlated field to that of a modified version of MBRW.
By Assumption (A.1) and Lemma~\ref{lem-covapp} , there exists a $\kappa_0 = 
\kappa_0(\alpha^{(1/10)})$ such that for all $\kappa \geq \kappa_0$,
$$\V(\varphi_{2^\kappa N, 2^{\kappa}v}) \leqslant \V(\mathcal{S}_{2^{2\kappa}N, v}) \mbox{ for all } v\in V_N^{1/10}\,.$$
Therefore, one can choose a collection of positive numbers 
$\{a_v : v \in V_N^{1/10}\}$ such that 
$$\V(\varphi_{2^{\kappa}N, 2^{\kappa}v}+a_v X) = 
\V(\mathcal{S}_{2^{2\kappa}N, v}) \,,$$
where $X$ is a standard Gaussian variable. 
Since the MBRW has constant variance, we have that $|a_v - a_u | 
\leqslant C_1$ for some constant $C_1 = C_1(\alpha^{(1/10)}) > 0$. 
By Lemma~\ref{lem-covapp}  again, one has 
$$\E (\mathcal{S}_{2^{2\kappa}N, v} - 
\mathcal{S}_{2^{2\kappa}N, u} )^2  \leq  2 \log_+ |u-v| + O(1)\,,$$
where the $O(1)$ term
is bounded by a absolute constant.  
On the other hand, for all $u, v\in V_N^{1/10}$,
$$\E(\varphi_{2^{\kappa}N, 2^{\kappa}v}+a_v X -
\varphi_{2^{\kappa}N, 2^{\kappa}u}-a_u X)^2 \ge 
\log 2 \cdot \kappa + 2\log_+ |u-v| - O_{\alpha^{(1/10)}}(1)\,,$$
where $O_{\alpha^{(1/10)}}(1)$ is a term that is bounded 
by a constant depending only on $\alpha^{(1/10)}$. 
Therefore, there exists a
$\kappa = \kappa(\alpha^{(1/10)})$ such that for all $u, v\in V_N^{1/10}$,
$$\E(\varphi_{2^{\kappa}N, 2^{\kappa}v}+a_v X - \varphi_{2^{\kappa}N, 
2^{\kappa}u}-a_u X)^2 \geq \E (\mathcal{S}_{2^{2\kappa}N, v} - 
\mathcal{S}_{2^{2\kappa}N, u} )^2\,.$$
Combined with Lemma~\ref{lem-slepian}, this implies
that for a suitable $C_{\kappa}$ depending on $\kappa$,
\begin{align}\label{eq-mid-step-left-tail}
\P(\max_{v \in V_N} \varphi_{2^{\kappa}N,2^{\kappa}v}  \leqslant m_N - \lambda)& \leq \P(\max_{v \in V_N^{1/10}} (\varphi_{2^{\kappa}N, 2^{\kappa}v} +a_v X) \leqslant m_N - \lambda/2) + \P(X \leq - \lambda /C_{\kappa}) \nonumber \\
&\leq \P(\max_{v \in V_N^{1/10}} \mathcal{S}_{2^{2\kappa}N, v} \leqslant m_N - \lambda/2) +   \P(X \leq - \lambda /C_{\kappa})\,.
\end{align}
There are  number of ways to bound  
$ \P(\max_{v \in V_N^{1/10}} \mathcal{S}_{2^{2\kappa}N, v} \leqslant m_N - 
\lambda/2)$, and we choose not to optimize the bound, but instead
simply apply the FKG inequality \cite{Pitt82}. More precisely, we note that there exists a collection of boxes  $\mathcal V$ with $|\mathcal V| \leq 2^{4d\kappa}$ where each box is a translated copy of $V_N^{1/10}$ such that $V_{2^{2\kappa} N} \subseteq \cup_{V \in \mathcal V} V $. Since $\{\max_{v \in V_{2^{2\kappa}N}} \mathcal{S}_{2^{2\kappa}N, v} \leqslant m_N - \lambda/2\} = \cap_{V\in \mathcal V} \{\max_{v \in V} \mathcal{S}_{2^{2\kappa}N, v} \leqslant m_N - \lambda/2\}$, the FKG inequality gives that
$$ \P(\max_{v \in V_{2^{2\kappa}N}} \mathcal{S}_{2^{2\kappa}N, v} \leqslant m_N - \lambda/2) \geq (\P(\max_{v \in V_N^{1/10}} \mathcal{S}_{2^{2\kappa}N, v} \leqslant m_N - \lambda/2)) ^{2^{4d\kappa}}\,,$$
Combined with \eqref{eq-mid-step-left-tail} and Lemma~\ref{lem-MBRWlefttail}, 
this completes the proof of the lemma.
\end{proof}

\section{Robustness of the maximum under perturbations}
\label{sec-robust}
The main goal of this section is to establish that  the 
law of the maximum for a log-correlated Gaussian field is robust under 
certain perturbations. These invariance properties will be crucial in
Section~\ref{sec:decomposition} when
constructing a new field that approximates our target field.

For a positive integer $r$, let $\mathcal B_r$ be a collection of 
sub-boxes of side length $r$ which forms a partition of 
$V_{\lfloor N/r\rfloor r}$. Write 
$\mathcal B = \cup_{r\in [N]} \mathcal B_r$. 
Let $\{g_B: B\in \mathcal B\}$ be a collection of 
i.i.d.\ standard Gaussian variables. 
For $v\in V_N$, denote by $B_{v, r} \in \mathcal B_r$ the box that 
contains $v$. For $\sigma = (\sigma_1, \sigma_2)$ with
$\|\sigma\|_2^2=\sigma_1^2+\sigma_2^2$ 
and $r_1, r_2$, define,
\begin{equation}\label{eq-def-tilde-phi}
\tilde{\varphi}_{ N,  r_1, r_2, \sigma, v}  = 
\varphi_{N, v}+ \sigma_1 g_{B_{v, r_1}} + \sigma_2 g_{B_{v, N/r_2}} \,,
\end{equation}
and set $\tilde M_{N, r_1, r_2, \sigma} = 
\max_{v\in V_N}\tilde{\varphi}_{ N, r_1, r_2, \sigma, v}$. 

For probability measures $\nu_1,\nu_2$ on $\mathbb R$,
let
$d(\nu_1,\nu_2)$ denote the L\'{e}vy distance between $\nu_1,\nu_2$,
i.e.
$$ d(\nu_1,\nu_2)=\inf \{\delta>0: \nu_1(B)\leq \nu_2(B^\delta)+\delta\quad
\mbox{\rm for all open sets $B$}\},$$
where $B^\delta=\{y: |x-y|<\delta \mbox{  for some }x\in B\}$.
In addition, define 
$$\tilde d(\nu_1, \nu_2) = \inf \{\delta>0: \nu_1((x, \infty))\leq 
	\nu_2((x-\delta, \infty))+\delta\quad
\mbox{\rm for all } x\in \mathbb R\}\,.$$
If $\tilde d(\nu_1, \nu_2) = 0$, then $\nu_1$ is stochastically 
dominated by $\nu_2$. Thus,
$\tilde d(\nu_1, \nu_2)$  measures approximate stochastic domination of $\nu_1$ by
$\nu_2$; in particular, unlike $d(\cdot,\cdot)$, the function
$\tilde d(\cdot,\cdot)$ is not symmetric.

With a slight abuse of notation, if $X,Y$ are random variables
with laws $\mu_X,\mu_Y$ respectively, we also write
$d(X,Y)$ for $d(\mu_X,\mu_Y)$ and $\tilde d(X, Y)$ for $\tilde d(\mu_X, \mu_Y)$.

\noindent {\bf A notation convention:} By Proposition~\ref{prop-maxrtfluc}, one
has that 
$$\limsup_{\delta\to 0} \limsup_N d(\max_{v\in V_N^\delta} \varphi_{N, v}, 
\max_{v\in V_N} \varphi_{N, v}) = 0\,.$$ Therefore, in order to 
prove Theorem~\ref{thm-convergence}, it suffices to show that 
for each fixed $\delta >0$, the law of 
$\max_{v\in V_N^\delta} \varphi_{N, v}-m_N$ converges. To this end, 
one only needs to consider the Gaussian field restricted to $V_N^\delta$. 
For convenience of notation, we will treat $V_N^\delta$ as the whole box 
that is under consideration. Equivalently, throughout the rest of the 
paper when assuming (A.1), (A.2) or (A.3) holds, we assume these 
assumptions hold with $\delta = 0$, and we set 
$\alpha:=\max(\alpha_0,\alpha^{(0)}$. 

\begin{figure}[ht]
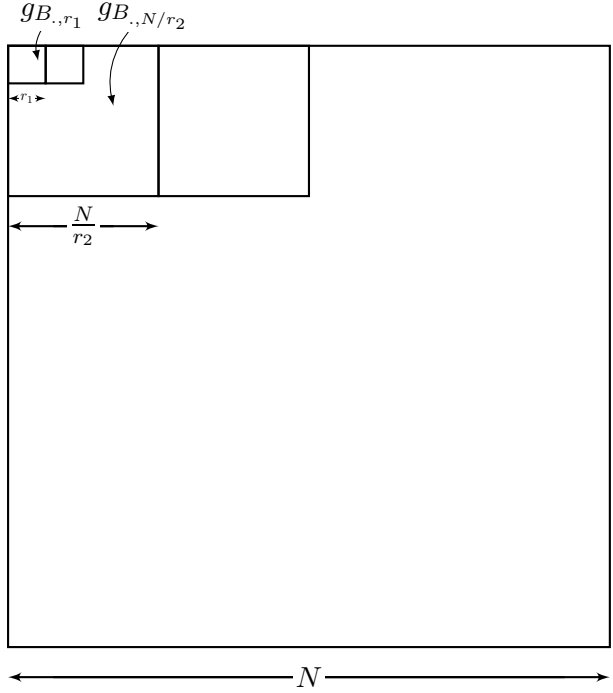

\begin{center}
\tikz[scale=2]{
\draw[black, thick](0,0) -- (0,4) -- (4,4) -- (4,0) -- cycle;
\draw[black, thick](0,4) -- (1,4) -- (1,3) -- (0,3) -- cycle;
\draw[black, thick](1,4) -- (2,4) -- (2,3) -- (1,3) -- cycle;
\draw[black, thick](0,4) -- (.25,4) -- (.25,3.75) -- (0,3.75) -- cycle;
\draw[black, thick](0.25,4) -- (.5,4) -- (.5,3.75) -- (0.25,3.75) -- cycle;
\draw[arrows=<->, -latex',  thin] (0.1, 3.65) -- (.25,3.65);
\draw[arrows=<->, -latex',  thin] (0.1, 3.65) -- (0,3.65);
\node[ann, scale=.5] at (.125, 3.65) {$r_1$};
\draw[arrows=<->, -latex',  thick] (0.3, -0.2) -- (4,-0.20);
\draw[arrows=<->, -latex',  thick] (3.7, -0.2) -- (0,-0.20);
\node[ann] at (2, -0.2) {$N$};
\draw[arrows=<->, -latex',  thick] (0.3, 2.8) -- (1,2.8);
\draw[arrows=<->, -latex',  thick] (0.7, 2.8) -- (0,2.8);
\node[ann] at (.5, 2.8) {$\frac{N}{r_2}$};
\draw[->, bend right, -latex] (0.3,4.2) to (0.2, 3.9);
\node[ann] at (.3, 4.2) {$g_{B_{.,r_1}}$};
\draw[->, bend right, -latex] (0.9,4.2) to (0.7, 3.6);
\node[ann] at (.9, 4.2) {$g_{B_{.,N/r_2}}$};
}
\caption{Perturbation levels of the  Gaussian field}
\label{fig-perturbation}
\end{center}
\end{figure}

The following lemma, which is one of the main results of this section, relates
the laws of $M_N$ and $\tilde M_{N, r_1, r_2, \sigma}$.

\begin{lemma}\label{lem-nfmax}   The following holds uniformly for all Gaussian fields $\{\varphi_{N,v}: v\in V_N\}$ satisfying Assumption (A.1):  
 \begin{equation}
 \limsup_{r_1, r_2 \to\infty} \limsup_{N\to \infty}  d(M_{N} - m_N, \tilde{M}_{N, r_1, r_2, \sigma} - m_N - \|\sigma\|_2^2\sqrt{d/2} )= 0 \,.\label{eq-invariance-2}
\end{equation} 
\end{lemma}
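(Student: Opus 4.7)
The strategy is to identify the right tail of $\tilde M_{N,r_1,r_2,\sigma}$ with the right tail of $M_N$ shifted by $\|\sigma\|_2^2\sqrt{d/2}$, via a Gaussian tail-shift identity. The matching tail bounds $\P(M_N > m_N+t) \asymp t e^{-\sqrt{2d}t}$ (from Proposition~\ref{prop-maxrtfluc} and Lemma~\ref{lem-righttail}), combined with the identity $\E\exp(\sqrt{2d}\sigma g) = \exp(d\sigma^2) = \exp(\sqrt{2d}\cdot \sigma^2\sqrt{d/2})$, say that convolving an exponential tail of rate $\sqrt{2d}$ against a centered Gaussian of variance $\sigma^2$ produces the same tail shifted rightward by $\sigma^2\sqrt{d/2}$, up to corrections that are lower order in the tail variable. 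This identifies the exact shift appearing on the right-hand side of~\eqref{eq-invariance-2}.

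I would first reduce to a single scale of perturbation. Since $\{g_{B_{v,r_1}}\}$ and $\{g_{B_{v,N/r_2}}\}$ are independent, the two perturbations can be handled successively: introduce $\tilde\varphi^{(1)}_{N,v}=\varphi_{N,v}+\sigma_1 g_{B_{v,r_1}}$, prove the single-scale analog of~\eqref{eq-invariance-2}, and iterate with $\tilde\varphi^{(1)}$ in place of $\varphi$ for the $\sigma_2$ perturbation. The iteration closes because $\tilde\varphi^{(1)}$ still satisfies Assumption~(A.1) with constants enlarged by $O(\sigma_1^2)$, so that Proposition~\ref{prop-maxrtfluc}, Lemma~\ref{lem-righttail}, and Lemma~\ref{lem-GFlefttail} continue to apply uniformly to the perturbed field.

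For the single-scale step with block side $r$ and perturbation $\sigma g_{B_{v,r}}$, the perturbation is constant within each box, so $\tilde M=\max_{B\in\mathcal{B}_r}(M_B+\sigma g_B)$ where $M_B=\max_{v\in B}\varphi_{N,v}$. For the upper bound on $\P(\tilde M>m_N+t+\sigma^2\sqrt{d/2})$, I would take a union bound over boxes and apply Proposition~\ref{prop-maxrtfluc} to $M_B$ while conditioning on $g_B$ and integrating against its Gaussian density; the Laplace-transform identity above exactly cancels the exponential moment of $\sqrt{2d}\sigma g_B$, producing a tail matching $\P(M_N>m_N+t)$ to leading order. For the matching lower bound, a convenient route is to Slepian-dominate $\tilde\varphi$ from below by a modified MBRW augmented with an analogous block-constant perturbation (extending the comparison of Lemma~\ref{lem-compare-righttail-lower}), where the tree/independence structure of the MBRW together with the sharp MBRW right tail of Lemma~\ref{lem-righttailMBRW} make the tail-shift computation explicit. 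Combining the right-tail match with tightness of $\tilde M-m_N$ (from Lemma~\ref{lem-GFlefttail} applied to $\tilde\varphi$) then yields the desired Lévy-distance estimate via a standard tightness-plus-tail-matching argument.

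The principal obstacle is the dependence between the $\{M_B\}$ induced by the long-range correlations of $\varphi_{N,v}$ across different sub-boxes, which prevents a naive multiplicative tail estimate. The order of limits $N\to\infty$ before $r_1,r_2\to\infty$ is essential here: with $r$ fixed (and eventually large) while $N\to\infty$, a single box carries a fraction $(r/N)^d$ of the total right-tail mass, so the union bound over $(N/r)^d$ boxes recovers the full tail provided cross-box overlaps in the rare-event regime are negligible. Quantifying those overlaps directly is technical, but passing to the MBRW comparison via Lemma~\ref{lem-compare-righttail-lower} builds in the required independence and should produce estimates uniform in the underlying field satisfying (A.1), as required by the statement of the lemma.
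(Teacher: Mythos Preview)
Your proposal has a fundamental conceptual gap: matching right tails, even together with tightness, does \emph{not} imply that the L\'evy distance goes to zero. The estimates from Proposition~\ref{prop-maxrtfluc} and Lemma~\ref{lem-righttail} only pin down $\P(M_N>m_N+t)$ up to unspecified multiplicative constants $c_\alpha\le\cdot\le C_\alpha$, and the Laplace-transform identity you invoke operates entirely in the large-$t$ regime. Two tight sequences can have right tails agreeing to leading order and still differ by $O(1)$ on the bulk $[-C,C]$ where most of the mass lives; there is no ``standard tightness-plus-tail-matching argument'' that bridges this. What~\eqref{eq-invariance-2} asserts is closeness of the \emph{entire} distribution, and your method only probes the tail.

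There is also a quantitative problem in the upper bound you sketch. With blocks of side $r$ there are $(N/r)^d$ of them, and~\eqref{eq-maxrtflucrttail2} gives $\P(M_B>m_N+\cdot)\lesssim (r/N)^{d/2}\cdot(\ldots)$; summing over boxes produces a prefactor $(N/r)^{d/2}$, which diverges. You note this obstacle but the Slepian/MBRW workaround would at best recover tail bounds with the wrong constants, leaving the first gap untouched. The paper's proof avoids both issues by a completely different mechanism: it introduces the auxiliary field $\varphi^*_{N,\sigma,v}=\varphi_{N,v}+\sqrt{\|\sigma\|_2^2/\log N}\,\varphi'_{N,v}$, which in law equals $a_N\varphi_{N,v}$ with $a_N=\sqrt{1+\|\sigma\|_2^2/\log N}$, so that $M^*_{N,\sigma}=a_N M_N$ is \emph{exactly} a deterministic rescaling of $M_N$ and hence close in L\'evy distance to $M_N+\|\sigma\|_2^2\sqrt{d/2}$. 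The substantive work (Proposition~\ref{prop-equivfields}) is then to show $d(\tilde M_{N,r_1,r_2,\sigma},M^*_{N,\sigma})\to 0$ via a direct coupling: the geometry-of-near-maxima Lemma~\ref{lem-loc1} localizes the maximizer within each large box, so the block-constant perturbation $\sigma_1 g_{B_{v,r_1}}+\sigma_2 g_{B_{v,N/r_2}}$ and the field perturbation $\sqrt{\|\sigma\|_2^2/\log N}\,\varphi'_{N,v}$ have asymptotically the same joint law when evaluated at the relevant near-maximizers. This gives distributional closeness directly, with no tail computation.
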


The next lemma states that under Assumption
(A.1), the law of the maximum is robust under
small perturbations (in the sense of $\ell_\infty$ norm) 
of the covariance matrix.

\begin{lemma}\label{lem-epsilon-domination}
Let  $\{\varphi_{N,v}: v\in V_N\}$ be a sequence of Gaussian fields satisfying 
Assumption (A.1), and let $\sigma$ be fixed. Let 
$\{\bar \varphi_{N, v}: v\in V_N\}$ be Gaussian fields such that  for all 
$u, v \in V_N$
$$|\var \varphi_{N, v} - \var \bar \varphi_{N, v}| \leq \epsilon, 
\mbox{ and } \E  \bar\varphi_{N, v} \bar\varphi_{N, u} 
\leq \E  \varphi_{N, v} \varphi_{N,  u} + \epsilon  \,.$$
Then, there exists $\iota  = \iota(\epsilon)$ with 
$\iota \to_{\epsilon\to 0} 0$ such that 
 \begin{equation*}
\limsup_{N\to \infty}  \tilde d(M_{N} - m_N , 
\max_{v\in V_N} \bar \varphi_{N, v} -  m_N ) \leq \iota \,.
\end{equation*} 
\end{lemma}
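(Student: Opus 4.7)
The plan is to apply Slepian's inequality to a carefully constructed pair of auxiliary fields with matched variances and ordered off-diagonal covariances, and then to translate the resulting comparison of maxima back to $M_N$ and $\bar M_N$ using the tail estimates of Proposition~\ref{prop-maxrtfluc}.

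First, I would check that $\bar\varphi$ also satisfies Assumption~(A.0) with $\alpha_0$ replaced by a constant depending only on $\alpha_0$, $\alpha^{(1/10)}$ and $\epsilon$: the variance bound is immediate, while the increment bound $\E(\bar\varphi_u-\bar\varphi_v)^2 \leq \E(\varphi_u-\varphi_v)^2 + 4\epsilon$ follows by expanding the squares and using both hypotheses. Consequently Proposition~\ref{prop-maxrtfluc} and Theorem~\ref{thm-tightness} apply uniformly to $\bar\varphi$, yielding tightness of $\bar M_N - m_N$ and the right-tail estimate $\P(\max > m_N + z) \leq Cze^{-\sqrt{2d}z}$ for both $M_N$ and $\bar M_N$.

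Next, set
\begin{equation*}
\hat\varphi_{N,v} := \varphi_{N,v} + \sqrt{2\epsilon}\,X,\qquad \bar\varphi^+_{N,v} := \bar\varphi_{N,v} + \sqrt{a_v}\,Y_v,
\end{equation*}
with $X$ and $\{Y_v\}_{v\in V_N}$ independent standard Gaussians (independent of the fields) and $a_v := \var\varphi_{N,v} + 2\epsilon - \var\bar\varphi_{N,v} \in [\epsilon,3\epsilon]$. By construction $\var\hat\varphi_{N,v} = \var\bar\varphi^+_{N,v}$ pointwise, and for $u \neq v$ the covariance hypothesis yields
\begin{equation*}
\Cov(\bar\varphi^+_u,\bar\varphi^+_v) \;=\; \Cov(\bar\varphi_u,\bar\varphi_v) \;\leq\; \Cov(\varphi_u,\varphi_v)+\epsilon \;\leq\; \Cov(\hat\varphi_u,\hat\varphi_v).
\end{equation*}
Slepian's inequality (Lemma~\ref{lem-slepian}) then gives the stochastic dominance $\max_v\bar\varphi^+_{N,v} \succeq \max_v\hat\varphi_{N,v}$. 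Since $\max_v\hat\varphi_{N,v} = M_N + \sqrt{2\epsilon}X$,
\begin{equation*}
\P(M_N > m_N + z) \;\leq\; \P\!\left(\max_v\bar\varphi^+_{N,v} > m_N + z - K\sqrt{2\epsilon}\right) + 2e^{-K^2/2} \quad \text{for every } K>0.
\end{equation*}

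The main obstacle, and the crux of the lemma, is to bound $\max_v\bar\varphi^+_{N,v}$ by $\bar M_N + \iota(\epsilon)$ on a high-probability event \emph{uniformly in} $N$, with $\iota(\epsilon)\to 0$ as $\epsilon \to 0$. The naive bound $\max_v\sqrt{a_v}Y_v \lesssim \sqrt{\epsilon d\log N}$ diverges in $N$ and is useless; instead one must exploit the log-correlated structure. Localise the maximiser of $\bar\varphi^+$ against the near-maximum cluster $A_L := \{v:\bar\varphi_{N,v}\geq m_N - L\}$ of $\bar\varphi$: outside $A_L$ the maximiser forces some $\sqrt{a_v}Y_v \geq L - O(1)$, an event controlled by a union bound over $v\in V_N$ using the Gaussian tail of $Y_v$; inside $A_L$ one uses a Markov-type control on $|A_L|$ via the refined right-tail estimate~\eqref{eq-maxrtflucrttail2}, whose $(|A|/|V_N|)^{1/2}$ factor is precisely what makes $|A_L|$ manageable, and combines it with a Gaussian tail bound on the $Y_v$'s restricted to $A_L$. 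A delicate calibration of $L = L(\epsilon,N)$ and the associated Markov threshold balances these two sources of error and yields an $N$-uniform bound of size $\iota(\epsilon)\to 0$, completing the argument; this last calibration, which rests essentially on the sharp log-correlated tail in Proposition~\ref{prop-maxrtfluc} rather than on generic Gaussian concentration, is the principal difficulty.
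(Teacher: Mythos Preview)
Your overall strategy---construct auxiliary fields with matched variances and ordered covariances, apply Slepian, then remove the auxiliary noise---is exactly the paper's approach. Your auxiliary fields $\hat\varphi_v = \varphi_v + \sqrt{2\epsilon}\,X$ and $\bar\varphi^+_v = \bar\varphi_v + \sqrt{a_v}\,Y_v$ are in fact a bit simpler than the paper's (which also introduces a multiplicative scaling $(1-\epsilon^*/\log N)$), and your Slepian step is correct.

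The gap is in your final ``noise removal'' step. Stratifying by the \emph{field} value via $A_L = \{v:\bar\varphi_{N,v} \geq m_N - L\}$ cannot close. The union bound outside $A_L$, taken over all of $V_N$, forces $L \gtrsim \sqrt{\epsilon\,d\log N}$; with such $L$ one has $\E|A_L|\gtrsim e^{\sqrt{2d}L}$, and then any Markov threshold $T\gg e^{\sqrt{2d}L}$ combined with $\P(\max_{v\in A_L}\sqrt{a_v}Y_v>\iota)\le T e^{-\iota^2/(6\epsilon)}$ forces $\iota \gtrsim \epsilon^{3/4}(\log N)^{1/4}\to\infty$. More fundamentally, $A_L$ is $\bar\varphi$-measurable, so one cannot apply \eqref{eq-maxrtflucrttail2} to $\bar\varphi$ conditionally on $A_L$: that estimate requires the set $A$ to be deterministic or independent of the field. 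Your invocation of the $(|A|/|V_N|)^{1/2}$ factor to ``control $|A_L|$'' is therefore circular here.

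The fix is to reverse the roles: stratify by the \emph{noise} value, setting $\Gamma_y = \{v: \sqrt{a_v}\,Y_v \in [y/2,y]\}$. These sets are independent of $\bar\varphi$, so \eqref{eq-maxrtflucrttail2} applies conditionally to give
\[
\P\big(\max_{v\in\Gamma_y}\bar\varphi_{N,v} > m_N + z - y \,\big|\, \Gamma_y\big) \le C\,(|\Gamma_y|/N^d)^{1/2}\, z\, e^{-\sqrt{2d}(z-y)},
\]
and $\E(|\Gamma_y|/N^d)^{1/2} \le e^{-c\,y^2/\epsilon}$ uniformly in $N$. Summing over dyadic $y \ge \sqrt{\epsilon}$ yields the desired bound with $\iota(\epsilon)\to 0$. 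This is precisely the content of Lemma~\ref{lem-robmax}, which the paper invokes to finish the proof in one line.
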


A key step in the proof of
Lemma~\ref{lem-nfmax} is the following characterization of the
geometry of
vertices achieving large values in the fields, an extension of 
\cite[Theorem 1.1]{DZ12}; it states that near maxima are either
at microscopic or macroscopic distance from each other. 
This may be of independent interest.
\begin{lemma}\label{lem-loc1}
There exists a constant  $c > 0$ such that,
uniformly for all Gaussian fields satisfying 
Assumption (A.1), 
we have
\begin{equation*}
\lim_{r \rightarrow \infty} \lim_{N \rightarrow \infty} \mathbb{P} 
(\exists u, v : |u-v| \in (r, \tfrac{N}{r}), \varphi_{N, v},\varphi_{N, u} 
\ge m_N - c \log \log r)=0\,.
\end{equation*}
\end{lemma}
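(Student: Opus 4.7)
The plan is to adapt \cite[Theorem 1.1]{DZ12}, proved there for the 2D DGFF, to the general log-correlated setting. The proof has three main ingredients: (i) a Slepian-type reduction to the MBRW, (ii) the hierarchical scale decomposition of the MBRW, and (iii) a multi-scale union bound, refined in dimensions $d\ge 3$ by a barrier estimate on partial maxima.

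First, by a bivariate extension of the Slepian argument used in Lemma~\ref{lem-righttailcomp}---one adds to each $\varphi_{N,v}$ an independent i.i.d.\ Gaussian $a_v X$ so that $(\varphi_{N,v}+a_v X)_{v\in V_N}$ matches the pointwise variances of the MBRW on $V_{2^\kappa N}$, and verifies via Assumptions (A.0)--(A.1) and Lemma~\ref{lem-covapp} that the corresponding increment variances are dominated---one reduces the claim to the analogous statement for the MBRW $\mathcal{S}_{M,\cdot}$ on $V_M$ with $M=2^\kappa N$ and $\kappa=\kappa(\alpha)$ sufficiently large. For the MBRW, and a pair $u,v$ with $|u-v|\asymp r_0\asymp 2^{k_0}$ (and $r_0\in (r,N/r)$), decompose
\[
\mathcal{S}_{M,u}=X_{uv}+Y_u,\qquad \mathcal{S}_{M,v}=X_{uv}+Y_v,
\]
where $X_{uv}$ is the sum of the MBRW contributions from boxes of side $\ge 2r_0$ that contain both $u$ and $v$ (shared by $u$ and $v$, variance $\asymp\log(M/r_0)$), and $Y_u,Y_v$ are independent Gaussians of variance $\asymp\log r_0$. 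Conditional on $X_{uv}=x$, the joint tail $\mathbb{P}(\mathcal{S}_{M,u},\mathcal{S}_{M,v}\ge m_N-c\log\log r)$ factors as $\mathbb{P}(Y\ge m_N-c\log\log r-x)^2$.

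One then integrates this conditional bound against the density of $X_{uv}$, sums over pairs of vertices with $|u-v|$ in a fixed dyadic scale (of which there are $\asymp N^d r_0^{d-1}$), and finally over dyadic scales $r_0\in (r,N/r)$. The bulk of the contribution comes from $x$ near $\sqrt{2d}\log(M/r_0)$, at which the condition on $Y_u,Y_v$ essentially becomes that each exceed roughly $m_{r_0}$; combining with Lemma~\ref{lem-righttailMBRW} gives a decay factor polynomial in $1/\log r$.

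\textbf{Main obstacle.} The delicate point is that a naive first-moment calculation, treating $(u,v)$ as a generic correlated Gaussian pair, fails to give uniform decay in $N$ for $d\ge 3$: a direct computation gives an expected pair count at scale $r_0$ of order $\exp\bigl(\log N\cdot \theta[(d-2)-(d-1)\theta]/(2-\theta)\bigr)$ with $\theta=\log r_0/\log N$, which is positive on $\theta\in (0,(d-2)/(d-1))$. The needed extra decay in $r$ comes from the multi-scale BRW structure: conditional on both endpoints being close to $m_N$, the partial maxima of the MBRW along the shared ancestral boxes at each intermediate scale are constrained to stay below their typical trajectory (a ``ballot''-type constraint), and this event has probability $\asymp 1/\log r_0$, providing the missing decay factor uniformly in $r_0\in (r,N/r)$. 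Establishing this barrier estimate rigorously---carrying out the sharpened BRW right-tail analysis that gives not only the leading order but also the ballot factor---is where the bulk of the technical work lies.
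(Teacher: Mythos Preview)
Your approach diverges from the paper's and, as stated, has a genuine gap. Two preliminary issues: (i) the ``bivariate Slepian reduction'' is not standard---Slepian compares $\P(\max X_a\ge\lambda)$, not events of the form $\{\exists(u,v):\varphi_u\ge m,\,\varphi_v\ge m\}$, so you would need Gordon's min--max inequality or some reformulation; (ii) the pair count at dyadic scale $r_0$ is $\asymp N^d r_0^{d}$, not $N^d r_0^{d-1}$, and with the correct count the naive first-moment exponent is $d\log N\cdot\theta(1-\theta)/(2-\theta)>0$ for all $\theta\in(0,1)$ and \emph{all} $d\ge 1$. The serious gap is your proposed fix: a ballot factor $\asymp 1/\log r_0$ on the common-ancestor trajectory $X_{uv}(\cdot)$ is only polylogarithmic in $N$, while the expected pair count at intermediate scales diverges polynomially in $N$. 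No barrier on a single trajectory can manufacture a factor $N^{-c}$, so the union bound still explodes. (A barrier argument \emph{can} be made to work, but only by first restricting to $\{M_N\le m_N+z\}$ and imposing the resulting constraint on \emph{all} trajectories; this shifts the dominant value of $X_{uv}$ in the integration and is a much more involved computation than what you sketch.)

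The paper sidesteps first-moment counting entirely. It passes to the pair-sum functional $\varphi_{N,r}^{\diamond}=\max\{\varphi_{N,u}+\varphi_{N,v}: r\le|u-v|\le N/r\}$, a maximum of a Gaussian process indexed by pairs, to which Sudakov--Fernique applies directly; comparison to MBRW (and \cite[Lemma~3.1]{DZ12}) gives $\E\varphi_{N,r}^{\diamond}\le 2m_N-c_1\log\log r+C$, with a companion tightness estimate $|\varphi_{N,r}^{\diamond}-\E\varphi_{N,r}^{\diamond}|=O(\log\log r)$. The lemma is then proved by contradiction via an amplification trick: if the bad event had probability $>\epsilon$ along a subsequence $r_k$, tile a box of side $2^\kappa N$ by $2^{\kappa d}$ independent copies of the field; the minimum over copies of the negative part of $\varphi_{N,r_k}^{(i),\diamond}-(2m_N-c\log\log r_k)$ then has small expectation, forcing $\E\hat\varphi_{N',r_k}^{\diamond}\ge 2m_N-(c_1/2)\log\log r_k$ for the combined field, contradicting the upper bound applied to it.
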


\subsection{Maximal sum over restricted pairs} 
As in the case of 2D DGFF discussed in \cite{DZ12},
in order to prove Lemma~\ref{lem-loc1}, we will study the maximum of the  
sum over restricted pairs. 
For any Gaussian field $\{\eta_{N, v}: v\in V_N\}$ and $r>1$, define
\begin{align*}
\eta_{N,r}^{\diamond}=\max \{\eta_{N,u}+\eta_{N,v} : u,v 
\in V_N, r \le |u-v| \le N/r \} \,.
\end{align*}
\begin{lemma}\label{lem-expcomp1} There
	exist constants $c_1, c_2$ 
	depending only on $d$ and $C> 0$ depending only on 
	$(\alpha, d)$ such that for all $r, n$ with $N=2^n$ and all 
	Gaussian fields satisfying Assumption (A.1), we have
\begin{equation}
	\label{eq-paris3}
	2m_N - c_2 \log \log r - C \le \E\varphi_{N,r}^{\diamond} \le 2m_N - c_1 \log \log r +C\,.
\end{equation}
\end{lemma}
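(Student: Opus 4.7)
The proof will combine the Gaussian comparison with BRW/MBRW from Lemmas~\ref{lem-righttailcomp} and~\ref{lem-compare-righttail-lower} with the hierarchical structure of those walks. The arithmetic identity
\begin{equation*}
2m_N - (2m_{N/\rho}+2m_\rho) \;=\; \tfrac{3}{\sqrt{2d}}\log\!\Bigl(\tfrac{\log(N/\rho)\cdot\log\rho}{\log N}\Bigr) + O(1)
\end{equation*}
is $\asymp \log\log r$ for $\rho$ at either endpoint of $[r,N/r]$ and grows to $\asymp\log\log N$ for $\rho\asymp\sqrt N$. Since pairs at distance $\rho$ decompose heuristically as two independent fine maxima, each of size $\approx m_\rho$, plus twice a coarse maximum of size $\approx m_{N/\rho}$, this identity determines the common constant $c_1, c_2 \asymp 3/\sqrt{2d}$.

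For the \emph{upper bound}, the plan is to apply Lemma~\ref{lem-righttailcomp} (or a sum-analog constructed by adding independent Gaussian perturbations to equate variances of the sums at all pairs) to reduce to BRW $\mathcal R_{N,\cdot}$. For BRW the decomposition is exact: if $u,v$ share a level-$(k+2)$ ancestor box $B\in\mathcal{BD}_{k+2}$ but lie in different level-$(k+1)$ sub-boxes, then $\mathcal R_{N,u}+\mathcal R_{N,v}=2Y_B+X_u^B+X_v^B$, where $Y_B$ is a centered Gaussian of variance $\log(N/2^{k+1})$ determined by the levels above $k+1$, and $X^B_\cdot$ is an independent BRW on $B$. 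Applying the BRW right-tail estimate (proved as in Lemma~\ref{lem-righttailMBRW}) to the two independent maxima $\max_B X^B_u$ and $\max_B X^B_v$, combined with a Gaussian tail for $Y_B$ and a union bound over the $O((N/2^{k+1})^d)$ ancestor boxes, produces an upper tail of the form $C(1+t)^2 e^{-\sqrt{2d}\,t}$ on the event that $\max_{(u,v):\,|u-v|\in[2^k,2^{k+1})}(\mathcal R_{N,u}+\mathcal R_{N,v})$ exceeds $2m_N - L_k + t$, where $L_k := \tfrac{3}{\sqrt{2d}}\log(\log(N/2^k)\cdot\log 2^k/\log N)$. Since $L_k\geq c\log\log r$ uniformly for $k\in[\log_2 r, n-\log_2 r]$ and grows to $\asymp\log\log N$ for middle $k$, the union bound over the $O(\log N)$ dyadic scales loses only $O(1)$ after integration.

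For the \emph{lower bound}, Lemma~\ref{lem-compare-righttail-lower} reduces (up to factor $1/2$) to MBRW. Pick two disjoint sub-boxes $B_1,B_2\subset V_N^{1/10}$ of side $N':=\lfloor N/(100r)\rfloor$ placed so that every $(u,v)\in B_1\times B_2$ has $|u-v|\in[r,N/r]$. Split the MBRW into coarse levels ($j>\log_2 N'$) and fine levels ($j\leq\log_2 N'$): the fine parts restricted to $B_1$ and $B_2$ are independent MBRWs on scale $N'$, each exceeding $m_{N'}+\lambda$ with probability $\gtrsim \lambda e^{-\sqrt{2d}\lambda}$ by Lemma~\ref{lem-righttailMBRW}. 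The coarse levels on the quotient lattice of $N'$-boxes behave (up to $O(1)$ covariance corrections) like an MBRW on $(100r)^d$ sites; by the existence of near-maxima, its two largest values each exceed $m_r-C$ with probability bounded below. Choosing $B_1,B_2$ to contain two such near-maxima of the coarse field, and combining with the independence of coarse and fine contributions, yields $\varphi_{N,r}^\diamond \geq 2m_{N'}+2m_r-C \geq 2m_N - c_2\log\log r - C$ with probability bounded away from zero. Finally, an upper-tail control of $M_N$ via Proposition~\ref{prop-maxrtfluc} shows the expectation contribution of the complementary event is $O(1)$, converting the high-probability lower bound into the desired bound on $\E\varphi_{N,r}^\diamond$.

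\textbf{Main obstacle.} The principal technical point is the uniformity in scale $k$ of the $e^{-\sqrt{2d} t}$ decay rate in the upper bound: without it, the union bound over the $O(\log N)$ dyadic scales would erode the $\log\log r$ correction to $\log\log N$. Middle scales are automatically negligible because $L_k\asymp\log\log N$ there, so effectively only $O(1)$ boundary scales contribute; verifying this requires careful bookkeeping of the $m_K$ asymptotics at every intermediate scale. For the lower bound, the key difficulty is producing two simultaneously large MBRW values in well-separated boxes; this relies crucially on the tree-like independence of the MBRW at low levels, together with the presence of multiple near-maxima (and not just a single maximum) of the coarse MBRW at high levels, which itself can be established by an MBRW variant of the second-moment method.
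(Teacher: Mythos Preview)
Your approach is workable but takes a substantially longer route than the paper's. The paper observes that since the target is an \emph{expectation} of a maximum (over the pair index set $\{(u,v): r\leq |u-v|\leq N/r\}$), the Sudakov--Fernique inequality (Lemma~\ref{lem-sudfer}) is the natural comparison tool rather than Slepian: it requires only the increment comparison
\[
\E\bigl((\varphi_{N,u}+\varphi_{N,v})-(\varphi_{N,u'}+\varphi_{N,v'})\bigr)^2 \;\le\; \E\bigl((\mathcal{S}_{2^\kappa N,\psi(u)}+\mathcal{S}_{2^\kappa N,\psi(v)})-(\mathcal{S}_{2^\kappa N,\psi(u')}+\mathcal{S}_{2^\kappa N,\psi(v')})\bigr)^2,
\]
with no variance matching needed. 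This gives directly $\E\mathcal{S}_{2^{-\kappa}N,r}^{\diamond}\le \E\varphi_{N,r}^{\diamond}\le \E\mathcal{S}_{2^{\kappa}N,r}^{\diamond}$, after which the paper simply cites the $d$-dimensional extension of \cite[Lemma~3.1]{DZ12} for the MBRW pair-sum bound. Your Slepian-based route forces you to build a ``sum-analog'' of Lemma~\ref{lem-righttailcomp} by equalizing the variances $\var(\varphi_{N,u}+\varphi_{N,v})$ across all admissible pairs, which under (A.1) fluctuate by $O(1)$; this can be done with a single auxiliary Gaussian scaled by pair-dependent coefficients, but it is extra work that Sudakov--Fernique sidesteps entirely.

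Beyond the comparison step, you are effectively re-proving the MBRW result of \cite{DZ12} from scratch. Your upper-bound sketch (dyadic decomposition by branching depth, then union bound over scales with the observation that $L_k\gtrsim\log\log r$ at the endpoints and $L_k\asymp\log\log N$ in the middle) and your lower-bound sketch (two well-separated sub-boxes, coarse/fine split, two simultaneous near-maxima of the coarse field) are both along the right lines and essentially recapitulate the argument in \cite{DZ12}. What you gain is a self-contained treatment; what the paper gains is brevity and a cleaner reduction via the correct Gaussian comparison lemma for expectations.
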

\begin{proof} 
In order to prove Lemma~\ref{lem-expcomp1}, we will show that
\begin{equation}\label{eq-sum-compare}
\E\mathcal{S}_{2^{-\kappa}N,r}^{\diamond}\le \E\varphi_{N,r}^{\diamond} \le \E\mathcal{S}_{2^{\kappa}N,r}^{\diamond} \,.
\end{equation}
To this end, we recall the following Sudakov-Fernique inequality 
\cite{Fernique75} which compares the first moments for maxima 
of two Gaussian processes.

\begin{lemma}\label{lem-sudfer}   Let $\mathcal{A}$ be an arbitrary 
	finite index set and let $\{ X_a : a \in \mathcal{A}\}$ and 
	$\{ Y_a : a \in \mathcal{A}\}$ be two centered Gaussian processes 
	such that: $$\E(X_a - X_b)^2 \ge \E(Y_a - Y_b)^2, 
	\qquad \mbox{ for all } a,b \in \mathcal{A}\,.$$ 
	Then $\E(\max_{a \in \mathcal{A}} X_a) \le 
	\E(\max_{a \in \mathcal{A}} Y_a)$.
\end{lemma} 
We will give a proof for the upper bound in \eqref{eq-paris3}.
The proof of the lower bound follows using similar arguments.
For $\kappa\in \mathbb N$, recall the definition of the restriction
map $\psi_N$ as in \eqref{eq-def-psi}.
By Lemma~\ref{lem-covapp}, there exists a $\kappa>0$ 
(depending only on $(\alpha, d)$) such that for all $u,v,u',v' \in V_N$, 
\begin{equation*}
\E(\varphi_{N, u}+\varphi_{N, v}-\varphi_{N, u'}-\varphi_{N, v'})^2\le \E(\mathcal{S}_{\psi_N(u)}^{2^{\kappa}N}+\mathcal{S}_{\psi_N(v)}^{2^{\kappa}N}-\mathcal{S}_{\psi_N(u')}^{2^{\kappa}N}-\mathcal{S}_{\psi_N(v')}^{2^{\kappa}N})^2\,.
\end{equation*}
(To see this, note that the variance of $ \mathcal{S}_{\psi_N(u)}^{2^{\kappa}N}$ 
increases with $\kappa$ but the covariance between 
$\mathcal{S}_{\psi_N(u)}^{2^{\kappa}N}$ and 
$\mathcal{S}_{\psi_N(v)}^{2^{\kappa}N}$ does not.)
In addition, note that for $r \le |u-v| \le N/r$ one has
$r \le |\psi_N(u)-\psi_N(v)|\le 2^{\kappa} N/r$. 
Combined with Lemma \ref{lem-sudfer},  this yields
$\E{\varphi}_{N,r}^{\diamond} \le \E\mathcal{S}_{2^{\kappa}N,r}^{\diamond}$,
completing the proof of the upper bound in \eqref{eq-sum-compare}.

To complete the proof of Lemma
\ref{lem-sudfer},
note that \cite[Lemma 3.1]{DZ12} readily extends 
to MBRW in $d$-dimension, and thus
$$2m_N - c_2 \log \log r - C \le 
\E\mathcal S_{N,r}^{\diamond} \le 2m_N - c_1 \log \log r +C\,,$$
where $c_1, c_2$ are constants depending only on $d$ and $C$ is a constant depending on $(\alpha, d)$.
Combined with \eqref{eq-sum-compare}, this completes the proof of the
lemma.
\end{proof}
We will also need the following tightness result. 
\begin{lemma}\label{lem-tight} 
Under Assumption (A.1), the sequence
 $\{ (\varphi^{\diamond}_{N , r} - 
\E \varphi^{\diamond}_{N , r})/ \log \log r \}_{N\in \mathbb N, r\geq 100 }$ 
is tight. Further, there exists a constant $C>0$ depending only on 
$d$ such that for all $r\geq 100$ and $N\in \mathbb N$,
$$|(\varphi^{\diamond}_{N , r} - 
\E \varphi^{\diamond}_{N , r})| \leq C \log \log r\,.$$
\end{lemma}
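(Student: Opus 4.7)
Since $\varphi^{\diamond}_{N,r}$ is a supremum of jointly Gaussian quantities, the ``further'' clause of the lemma cannot be read literally deterministically; I interpret it as the assertion that both tails of $\varphi^{\diamond}_{N,r}-\E\varphi^{\diamond}_{N,r}$ decay on the scale $\log\log r$, uniformly in $N,r$ with a constant depending only on $d$. The strategy is to handle the two tails separately: the upper tail is essentially free from already-established results, while the lower tail requires a hierarchical argument carried out on the MBRW and then transferred back to $\varphi$.

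\textbf{Upper tail.} The cheap direction uses only the deterministic inequality $\varphi^{\diamond}_{N,r}\le 2M_N$ together with the mean asymptotics $\E\varphi^{\diamond}_{N,r}\ge 2m_N-c_2\log\log r-C$ supplied by Lemma~\ref{lem-expcomp1}. Combining these two inputs yields
\[\varphi^{\diamond}_{N,r}-\E\varphi^{\diamond}_{N,r}\;\le\;2(M_N-m_N)+c_2\log\log r+C,\]
and since $\{M_N-m_N\}_N$ is tight by Theorem~\ref{thm-tightness}, the upper tail of $(\varphi^{\diamond}_{N,r}-\E\varphi^{\diamond}_{N,r})/\log\log r$ is tight, with a typical upper deviation already of the announced order $O(\log\log r)$.

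\textbf{Lower tail.} For the lower tail I first establish the bound on the MBRW and then transfer it to $\varphi$ via the Slepian-type coupling (Lemma~\ref{lem-slepian}) used in the proof of Lemma~\ref{lem-expcomp1}. For the MBRW, I decompose $\mathcal S_{N,v}$ into a coarse piece (living on scales $\geq\log_2 r'$ for an intermediate scale $r'\in[r,N/r]$) and fine pieces (independent MBRWs on the sub-boxes of side $r'$). Lemma~\ref{lem-MBRWlefttail} applied locally on each sub-box shows that, with probability $1-O(e^{-c})$ of a user-chosen constant, many sub-boxes carry a local near-maximum within $O(\log\log r)$ of its expectation; two such sub-boxes at mutual distance automatically in $[r,N/r]$ then furnish a pair entering $\mathcal S^{\diamond}_{N,r}$ with sum at least $2m_N-C\log\log r$.

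\textbf{Main obstacle.} The real difficulty is squeezing this lower-tail bound at the sharp scale $\log\log r$: Borell's Gaussian concentration applied to the process $\{\varphi_{N,u}+\varphi_{N,v}\}$ gives fluctuations of size $\sqrt{\log N}$ since the individual variances are of order $\log N$, which is entirely useless here. The $\log\log r$ scale only becomes visible after unearthing the hierarchical structure, which is why the detour through the MBRW is essentially forced. A secondary subtlety is that Sudakov--Fernique preserves expectations but not tails, so the transfer from $\mathcal S^{\diamond}_{N,r}$ back to $\varphi^{\diamond}_{N,r}$ must pass through Slepian's inequality; this requires matching variances via the extra standard Gaussian summand $a_v X$ introduced in Lemmas~\ref{lem-righttailcomp}--\ref{lem-compare-righttail-lower}, whose contribution is absorbed harmlessly into the $C\log\log r$ budget.
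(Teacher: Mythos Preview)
Your upper-tail argument is fine (modulo the fact that you invoke Theorem~\ref{thm-tightness}, which uses both (A.0) and (A.1) whereas the lemma only states (A.1); this is harmless in context).

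The lower-tail argument, however, has a real gap. If you decompose $\mathcal S_{N,\cdot}$ at a single intermediate scale $r'$ into a coarse piece (variance $\log(N/r')$) and independent fine MBRWs on the $r'$-sub-boxes, then the local maximum of the \emph{fine} piece in a sub-box is only $m_{r'}+O(\log\log r)$ with the claimed probability, not $m_N$. To reach a pair-sum of $2m_N - C\log\log r$ from two such sub-boxes you would still need the two coarse values to sum to roughly $2\sqrt{2d}\log(N/r')$; for a Gaussian of variance $O(\log(N/r'))$ this is a deviation of order $\sqrt{\log(N/r')}$ standard deviations, with probability $\asymp (N/r')^{-d}$, and no union over polynomially many sub-box pairs compensates. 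If instead ``local near-maximum'' refers to $\max_{v\in B}\mathcal S_{N,v}$ itself, its expectation is still only $m_{r'}$ (the coarse part has mean zero), so the same obstruction applies. A correct argument along these lines would require either an induction on the number of scales or a full second-moment computation over pairs of sub-boxes---neither of which is in your sketch. The Slepian transfer back to $\varphi$ that you describe is fine, but it is downstream of this gap.

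The paper's proof sidesteps all of this with a short self-bounding trick that you may find instructive. Place $2^d$ independent copies of $\{\varphi_{N,\cdot}\}$ on the $2^d$ sub-cubes of $V_{2N}$; the resulting field $\hat\varphi_{2N,\cdot}$ again satisfies (A.1) (with a slightly larger constant), so Lemma~\ref{lem-expcomp1} pins $\E\hat\varphi^{\diamond}_{2N,r}$ within $O(\log\log r)$ of $2m_N$. Since trivially $\hat\varphi^{\diamond}_{2N,r}\ge\max\{\varphi^{(1),\diamond}_{N,r},\varphi^{(2),\diamond}_{N,r}\}$ for any two of the i.i.d.\ copies, the identity $|a-b|=2(a\vee b)-a-b$ gives
\[
\E\bigl|\varphi^{(1),\diamond}_{N,r}-\varphi^{(2),\diamond}_{N,r}\bigr|
\;\le\; 2\bigl(\E\hat\varphi^{\diamond}_{2N,r}-\E\varphi^{\diamond}_{N,r}\bigr)
\;\le\; C'\log\log r,
\]
and then $\E|\varphi^{\diamond}_{N,r}-\E\varphi^{\diamond}_{N,r}|\le \E|\varphi^{(1),\diamond}_{N,r}-\varphi^{(2),\diamond}_{N,r}|$ by Jensen. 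Thus the $L^1$ bound (and hence tightness) drops out of the two-sided mean estimate of Lemma~\ref{lem-expcomp1} alone, with no separate tail analysis and no passage through the MBRW.
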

\begin{proof} 
Take $N' =  2N$ and partition $V_{N'}$ into $2^d$ copies of 
$V_N$, denoted by $V_N^{(1)} , \ldots, V_N^{(2^d)}$. 
For each $i \in [2^d]$, let $\{\varphi_{N, v}^{(i)}: v\in V_N^{(i)}\}$ be
an independent copy of $\{\varphi_{N, v}: v\in V_n\}$ where we 
identify $V_N$ and $V_N^{(i)}$ by the suitable translation 
such that the two boxes coincide. Denote by 
\begin{equation}\label{eq-def-tilde-phi1}
\hat \varphi_{N', v} = \varphi_{N, v}^{(i)} 
\mbox{ for } v\in V_N^{(i)} \mbox{ and } i\in [2^d]\,.
\end{equation}
Clearly, $\{\varphi_{N', v}\}$ is a Gaussian field that 
satisfies Assumption (A.1) (with $\alpha$ increased by an 
absolute constant). Therefore, by Lemma~\ref{lem-expcomp1}, we have 
\begin{equation}\label{eq-exp-sum-tilde}
2m_N - c_2 \log \log r - C \le  \E 
\hat\varphi_{N,r}^{\diamond} \le 2m_N - c_1 \log \log r +C\,,
\end{equation}
where $c_1, c_2, C>0$ are constants depending only on $(d, \alpha)$.
In addition, we have $$\E (\hat{\varphi}_{N',r}^{\diamond}) 
\geq \E \max \{\varphi_{N,r}^{(1), \diamond},\varphi_{N,r}^{(2), 
\diamond} \}\,.$$
Combined with Lemma~\ref{lem-expcomp1} and 
\eqref{eq-exp-sum-tilde}, and the simple algebraic 
fact that $|a-b| = 2 (a \vee b) -a - b$, it yields that
\begin{equation*} \E|\varphi_{N,r}^{(1), \diamond} - \varphi_{N,r}^{(2), 
	\diamond} | \le 2(\E \hat{\varphi}_{N',r}^{\diamond}-
	\E \varphi_{N,r}^{\diamond} ) \le C' \log
	\log r\,, \mbox{ for all } r\geq 100\,,
\end{equation*}
where $C'>0$ is a constant depending only on $d$. 
This completes the proof of the lemma.
\end{proof}

\subsection{Proof of Lemma~\ref{lem-loc1}}
In this subsection we will prove Lemma~\ref{lem-loc1}, by contradiction. 
Suppose otherwise that Lemma~\ref{lem-loc1} does not hold. 
Then for any constant $c>0$, there exists $\epsilon > 0$ and a 
subsequence $\{r_k\}$ such that for all $k\in \mathbb N$
\begin{equation}\label{eq-loc2}\lim_{N \rightarrow \infty} 
	\P \big( \exists u, v : |u-v| \in \left(r_k, \frac{N}{r_k}\right), 
	\varphi_{N, v},\varphi_{N, u} \ge m_N - c \log \log r_k\big)> \epsilon\,.
\end{equation}
Now fix $\delta>0$ and consider $N' = 2^\kappa N$ where 
$\kappa$ is an integer to be selected.  
Partition $V_{N'}$ into $2^{\kappa d}$ disjoint boxes of side 
length $N$, denoted by $V_N^{(1)}, \ldots, V_N^{(2^{\kappa d})}$. 
Define $\{\hat \varphi_{N', v}: v\in V_{N'}\}$ in the same manner 
as in \eqref{eq-def-tilde-phi1} except that now we take 
$2^{\kappa d}$ copies of $\{\varphi_{N, v}: v\in V_N\}$ 
(one for each $V_{N}^{(i)}$ with $i\in [2^{\kappa d}]$). Clearly, 
$\{\hat \varphi_{N', v}: v\in V_{N'}\}$ is a Gaussian field 
satisfies Assumption (A.1) with $\alpha$ replaced by 
a constant $\alpha'$ depending only on $(\alpha, d, \kappa)$. 
Therefore, by Lemma~\ref{lem-expcomp1}, 
\begin{equation}\label{eq-exp-sum-tilde-kappa}
2m_N - c_2 \log \log r - C \le  \E \hat\varphi_{N',r}^{\diamond} \le 2m_N - c_1 \log \log r + C\,,
\end{equation}
where $c_1, c_2>0$ are two constants depending only on $d$ and $C>0$ is a constant depending only on $(\alpha, d, \kappa)$.

Next we derive a contradiction to \eqref{eq-exp-sum-tilde-kappa}. 
Set $z_{N, r} = 2m_N - c\log \log r$, $Z_{N, r} = 
(\hat{\varphi}_{N',r}^{\diamond} - z_{N, r})_-$ 
and $Y_{N, r}^{(i)} = (\varphi_{N,r_k}^{(i), \diamond}  - 
z_{N, r})_-$. Then \eqref{eq-loc2} implies that 
\begin{equation}\label{eq-Y-N-r}
\lim_{N\to \infty}\P(Y_{N, r_k}^{(1)} > 0) \leq 1 - \epsilon\, \mbox{ for all } k\in \mathbb N\,.
\end{equation}
In addition, by Lemmas~\ref{lem-expcomp1} and \ref{lem-tight}, there exists a constant $C'>0$ depending only on $d$ such that for all $r\geq 100$ and $N\in \mathbb N$, we have
\begin{equation}\label{eq-Y-N-r-exp}
\E Y_{N, r}^{(1)} \leq C' \log\log r\,.
\end{equation}
Clearly,  $Z_{N, r} \leq \min_{i\in [2^{\kappa d}]} Y_{N, r}^{(i)}$. Combined with the fact that $Y_{N, r}^{(i)}$ are i.i.d.\ random variables, one obtains
$$\E Z_{N, r_k}\leq \int_0^\infty (\P(Y_{N, r_k}^{(1)} > y))^{2^{\kappa d}} dy \leq (1-\epsilon)^{2^{\kappa d } - 1} \int_0^\infty (\P(Y_{N, r_k}^{(1)} > y))dy \leq  (1-\epsilon)^{2^{\kappa d } - 1} \E Y_{N, r_k}^{(1)}\,,$$
where \eqref{eq-Y-N-r} was used in the second inequality. 
Combined with \eqref{eq-Y-N-r-exp}, one concludes that for all $r\geq 100$ 
and $N$
$$\E Z_{N, r_k} \leq  (1-\epsilon)^{2^{\kappa d } - 1} C' \log\log r_k\,.$$
Now set $c = c_1/4$ and choose $\kappa$ depending on 
$(\epsilon, d, C', c_1)$ such that 
$ (1-\epsilon)^{2^{\kappa d } - 1} C' \leq c_1/4$. Then,
$$\E \hat \varphi_{N', r_k}^\diamond \geq 2m_N - c_1 \log \log r_k/2\,,$$
for all $k\in \mathbb{N}$ and sufficiently large 
$N \geq N_{ k}$ where $N_{ k}$ is a number depending only on $k$. 
Sending $N\to \infty$ first and then $k\to \infty$ 
contradicts \eqref{eq-exp-sum-tilde-kappa}, 
thereby completing the proof of the lemma. \qed

\subsection{Proof of Lemmas~\ref{lem-nfmax} and \ref{lem-epsilon-domination}}
The next lemma, which 
extends \cite[Lemma 3.9]{BDZ13} to the current setup,
will be useful for the proof of Lemma~\ref{lem-nfmax}  and later in the paper. 
\begin{lemma}\label{lem-robmax}
Let Assumptions (A.0) and (A.1) holds. 
Let $\{\phi_u^N: u\in V_N\}$ be a collection of random variables 
independent of $\{\varphi_{N,  u}: u\in V_N\}$ such that
\begin{equation}\label{eq-assumption-phi}
\P(\phi_u^N \ge 1 + y) \le e^{-y^2} \mbox{ for all } u\in V_N\,.
\end{equation}
Then, there exists  $C = C(\alpha, d)> 0$ such that, 
for any $\epsilon > 0, N \in \mathbb{N} $  and $x \ge - \epsilon ^{-1/2}$,
\begin{equation}\label{eq-robrttailgf}
\P( \max_{u \in V_N} ( \varphi_{N, u} + \epsilon \phi_{u}^N ) 
\ge m_{N} + x) \le \P( \max_{u \in V_N}  \varphi_{N, u} \ge m_{N} + x - \sqrt{\epsilon} ) (1 + C(e^{-C^{-1}\epsilon^{-1}})) \,.
\end{equation}
\end{lemma}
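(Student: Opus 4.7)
My plan is to run a level-set decomposition on the perturbation $\phi_u^N$. Set $y_k = \epsilon^{-1/2}+k$ for $k \geq 0$, let $V_N^{(0)} = \{u \in V_N : \phi_u^N \leq \epsilon^{-1/2}\}$, and let $V_N^{(k)} = \{u : y_{k-1} < \phi_u^N \leq y_k\}$ for $k \geq 1$. On $V_N^{(0)}$ the perturbation satisfies $\epsilon \phi_u^N \leq \sqrt{\epsilon}$ deterministically, so
$$\P\Big(\max_{u \in V_N^{(0)}}(\varphi_{N,u} + \epsilon \phi_u^N) \geq m_N + x\Big) \leq \P\Big(\max_{u \in V_N}\varphi_{N,u} \geq m_N + x - \sqrt{\epsilon}\Big),$$
which produces the main term on the right-hand side of \eqref{eq-robrttailgf}. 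Everything else must be shown to be an $e^{-C^{-1}\epsilon^{-1}}$ fraction of this main term.

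For each $k \geq 1$, the set $V_N^{(k)}$ is random but independent of the field $\varphi$, with $\E|V_N^{(k)}|/|V_N| \leq \P(\phi_u^N > y_{k-1}) \leq e^{-(y_{k-1}-1)^2}$ by assumption \eqref{eq-assumption-phi}. Conditioning on the realization $V_N^{(k)} = A$, the event that the perturbed maximum on $A$ exceeds $m_N + x$ is contained in $\{\max_{u \in A}\varphi_{N,u} \geq m_N + x - \epsilon y_k\}$. Applying \eqref{eq-maxrtflucrttail2} with $z = \max(x,1)$ and $y = z - x + \epsilon y_k \geq 0$, and then averaging using Jensen's inequality for the square root, I get
$$\P\Big(\max_{u \in V_N^{(k)}}(\varphi_{N,u} + \epsilon \phi_u^N) \geq m_N + x\Big) \leq C(|x|+1)\, e^{-\sqrt{2d}(x - \epsilon y_k)}\, e^{-(y_{k-1}-1)^2/2}.$$

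Summing over $k \geq 1$, the dominant contribution comes from $k=1$ and equals $C(|x|+1)e^{-\sqrt{2d}x} e^{-\sqrt{2d}\sqrt{\epsilon}} e^{-(\epsilon^{-1/2}-1)^2/2}$, which is at most $C(|x|+1)e^{-\sqrt{2d}x}e^{-\epsilon^{-1}/4}$ for $\epsilon$ small; the successive terms $k \geq 2$ decay geometrically because the Gaussian tail $e^{-(y_{k-1}-1)^2/2}$ loses factors of $e^{-\epsilon^{-1/2}}$ per unit $k$, which dominates the $e^{\sqrt{2d}\epsilon y_k}$ growth. To convert the prefactor $e^{-\sqrt{2d}x}$ into the main probability, Lemma~\ref{lem-righttail} gives, for $x-\sqrt{\epsilon} \in [1,\sqrt{\log N}]$, $\P(\max_u \varphi_{N,u} \geq m_N + x - \sqrt{\epsilon}) \geq C^{-1}(x-\sqrt{\epsilon})e^{-\sqrt{2d}(x-\sqrt{\epsilon})}$, so the ratio of the remainder to the main term is $O(e^{-C^{-1}\epsilon^{-1}})$.

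The edge cases are handled separately. When $x \leq 1+\sqrt{\epsilon}$ (including all $x \in [-\epsilon^{-1/2},1]$), the main probability $\P(\max\varphi_{N,u} \geq m_N + x-\sqrt{\epsilon})$ is bounded below by an absolute positive constant via Lemma~\ref{lem-GFlefttail} combined with Theorem~\ref{thm-tightness}, so the extra additive $O(e^{-\epsilon^{-1}/4})$ is already of the right form. For $x > \sqrt{\log N}$, I would argue directly from \eqref{eq-maxrtflucrttail} that both sides are comparably tiny. The main obstacle I anticipate is ensuring that the polynomial prefactors $(|x|+1)$ generated by \eqref{eq-maxrtflucrttail2} and by the lower bound in Lemma~\ref{lem-righttail} match up so that their ratio is uniformly bounded; this forces the specific choice of thresholds $y_k$ (closer spacing would produce too many terms to sum cleanly; wider spacing would weaken the critical $e^{-\epsilon^{-1}/2}$ factor coming from $k=1$).
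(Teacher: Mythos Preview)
Your approach is essentially the same as the paper's: a level-set decomposition of the perturbation $\phi_u^N$, followed by an application of \eqref{eq-maxrtflucrttail2} on each level set (averaged via Jensen), and then a summation. The paper uses dyadic levels $\Gamma_y=\{u: y/2\le \epsilon\phi_u^N\le y\}$ with $y=2^i\sqrt{\epsilon}$ rather than your arithmetic levels $y_k=\epsilon^{-1/2}+k$, but this is a cosmetic difference; either spacing yields a summable series dominated by its first term, of size $\asymp e^{-c/\epsilon}$. Your treatment is in fact more explicit than the paper's on the conversion step (replacing the polynomial-times-exponential bound by a multiple of $\P(M_N\ge m_N+x-\sqrt{\epsilon})$ via Lemma~\ref{lem-righttail}) and on the edge cases $x\le 1+\sqrt{\epsilon}$ and $x>\sqrt{\log N}$, which the paper's proof simply omits.
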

\begin{proof}
We first 
give the proof for $\epsilon \le 1$.
Define $ \Gamma_y =\{ u \in V_N : y/2 \le \epsilon \phi_u^N \le y \}$.  Then,
\begin{align*}
\P ( \max_{u \in V_N} (\varphi_{N, u} + \epsilon \phi_u^N ) \ge m_{N} + x) 
\le &\P( M_{N}\ge  m_{N} + x - \sqrt{\epsilon} ) \\
& + \sum_{i=0}^{\infty} \E ( \P (\max_{u \in \Gamma_{2^i \sqrt{\epsilon}}} \varphi_{N, u} \ge  m_{N} + x - 2^i\sqrt{\epsilon}| \Gamma_{2^i \sqrt{\epsilon}}))\,.
\end{align*}
By Proposition \ref{prop-maxrtfluc}, one
can bound the second term on the right hand side above by
\begin{equation*}
\sum_{i=0}^{\infty} \E ( \P (\max_{u \in V_N}   \varphi_{N,  u} \ge  m_{N} + 
x - 2^i\sqrt{\epsilon}| \Gamma_{2^i \sqrt{\epsilon}})) \lesssim 
\frac{x \vee 1}{e^{\sqrt{2d}x}} \sum_{i=0}^{\infty} 
\E ( |\Gamma_{2^i \sqrt{\epsilon} } | /N^d)^{1/2} 
e^{\sqrt{2 d}2^i \sqrt{\epsilon} } \,.
\end{equation*}
By \eqref{eq-assumption-phi}, one has
$\E(|\Gamma_{2^i \sqrt{\epsilon} } | /N^d)^{1/2} \le 
e^{-4^i (C\epsilon)^{-1}} $. Altogether, one gets
\begin{equation*}
\sum_{i=0}^{\infty} \E ( \P (\max_{u \in V_N} \varphi_{N, u} 
\ge m_{N} + x - 2^i\sqrt{\epsilon}| \Gamma_{2^i \sqrt{\epsilon}})) 
\lesssim \frac{x \vee 1}{e^{\sqrt{2d}x}} e^{- (C\epsilon)^{-1}} \,,
\end{equation*}
completing the proof of the lemma when $\epsilon\leq 1$. The case $\epsilon> 1$
is simpler and follows by repeating the same argument with $\Gamma_{2^i\epsilon}$
replacing $\Gamma_{2^i\sqrt{\epsilon}}$. We omit further details.
\end{proof}

We next consider a combination of two independent copies of $\{\varphi_{N,v}\}$.
For $\sigma>0$, define 
\begin{equation}\label{eq-def-phi*}
\varphi^*_{N, \sigma, v} =\varphi_{N, v} + 
\sqrt{\frac{\|\sigma\|_2^2}{\log N}} \varphi'_{N, v} \mbox{ for } 
v\in V_N\,, \mbox{ and } M^*_{N, \sigma} = 
\max_{v\in V_N} \varphi^*_{N, \sigma, v}\,.  
\end{equation} 
where $\{\varphi'_{N, v}: v\in V_N\}$ is an 
independent copy of $\{\varphi_{N, v}: v\in V_N\} $. 
Note that the field $\{\varphi^*_{N,\sigma,v}\}$ is distributed like  the field
$\{a_N\varphi_{N,v}\}$ where $a_N=\sqrt{1+\|\sigma\|_2^2/\log N}$.
\begin{remark}\label{remark-BL}
The idea of writing a Gaussian field as a sum of two independent Gaussian 
fields has been extensively employed in the study of Gaussian processes. 
In the context of the study of extrema of the 2D DGFF,
this idea was first used in \cite{BL13}, where (combined with an invariance
result from 
\cite{Liggett} as well as the geometry of the maxima of DGFF \cite{DZ12}, see
Lemma
\ref{lem-expcomp1}) it led to a complete description of the extremal
process of 2D DGFF. 
The definition \eqref{eq-def-phi*} is inspired by \cite{BL13}.
\end{remark}
The following is the key to the proof of Lemma~\ref{lem-nfmax}.
\begin{prop}\label{prop-equivfields} 
Let Assumption (A.1) hold.
Let $\{\tilde \varphi_{N, r, \sigma,v}: v\in V_N\}$ and 
$\{\varphi^*_{N, \sigma, v}: v\in V_N\}$ be defined as in 
\eqref{eq-def-tilde-phi} and \eqref{eq-def-phi*} respectively. 
Then for any fixed $\sigma$, 
\begin{equation}
	\label{eq-paris4}
\lim_{r_1, r_2\to \infty}\limsup_{N\to \infty}d(\tilde M_{N, r_1, r_2, 
\sigma} - m_N, M^*_{N, \sigma} - m_N) = 0\,.
\end{equation}
\end{prop}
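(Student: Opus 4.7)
The plan is to exploit the two-scale geometry of near-maxima of $\varphi_{N,\cdot}$ furnished by Lemma~\ref{lem-loc1}: both noise fields $\xi^{(1)} := \tilde\varphi_{N, r_1, r_2, \sigma, \cdot} - \varphi_{N,\cdot}$ and $\xi^{(2)} := \varphi^*_{N, \sigma, \cdot} - \varphi_{N,\cdot}$ are independent of $\varphi_{N,\cdot}$, have variance equal to $\|\sigma\|_2^2$ up to $o(1)$, and when evaluated at pairs of vertices at distance $< r$ or $> N/r$ (for any fixed $r$) have covariances that agree with one another up to $o(1)$ as $N\to\infty$. Since Lemma~\ref{lem-loc1} guarantees that, with probability tending to $1$ as $r\to\infty$, no pair of near-maxima of $\varphi_{N,\cdot}$ falls in the mesoscopic range $[r, N/r]$, the two noise fields act essentially identically on the set of near-maxima, and hence the two maxima should share a common limiting law.

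Concretely, fix $\eta > 0$, then a large $K$, and $r$ still larger (so that the relevant $c\log\log r > K$ in Lemma~\ref{lem-loc1}). By Proposition~\ref{prop-maxrtfluc} together with Gaussian tail bounds on $\xi^{(i)}$, with probability at least $1-\eta$ both $\tilde M_{N, r_1, r_2, \sigma}$ and $M^*_{N, \sigma}$ are attained on the random set $W := \{v\in V_N: \varphi_{N,v}\ge m_N - K\}$. Invoking Lemma~\ref{lem-loc1}, with probability $\ge 1-\eta$ all pairwise distances within $W$ lie in $[0,r)\cup(N/r,\infty)$. Partition $W$ into clusters by the equivalence relation of connectivity through steps of size $<r$; each cluster has diameter $<r$, and distinct clusters are separated by distance $>N/r$.

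For $\tilde\varphi$, take $r_1, r_2$ so large that $r\ll r_1$ and $r\ll r_2$, and align the $r_1$- and $N/r_2$-box partitions so that the former refines the latter. Then, up to a boundary event of probability $O(r/r_1)+O(r r_2/N)\to 0$, each cluster lies in a single $r_1$-box and a single $N/r_2$-box, so $\xi^{(1)}$ is constant on each cluster, and distinct clusters (being at distance $>N/r\gg N/r_2$) inhabit distinct $N/r_2$-boxes, hence carry independent noise. Therefore $\tilde M_{N, r_1, r_2, \sigma} = \max_C(\max_{v\in C}\varphi_{N,v} + Z^{(1)}_C) + o(1)$, with i.i.d.\ $Z^{(1)}_C\sim\mathcal N(0,\|\sigma\|_2^2)$. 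For $\varphi^*$, Assumption (A.1) yields $\Cov(\xi^{(2)}_u,\xi^{(2)}_v) = (\|\sigma\|_2^2/\log N)\Cov(\varphi_{N,u},\varphi_{N,v}) = \|\sigma\|_2^2(1-\log_+|u-v|/\log N)+o(1)$, which tends to $\|\sigma\|_2^2$ for $|u-v|<r$ and to $0$ for $|u-v|>N/r$. Thus $\xi^{(2)}$ is within-cluster constant and across-cluster independent, each up to a residual Gaussian fluctuation of variance $o(1)$ on $W$. Absorbing these $o(1)$ perturbations via Lemma~\ref{lem-robmax} gives $M^*_{N,\sigma} = \max_C(\max_{v\in C}\varphi_{N,v} + Z^{(2)}_C) + o(1)$ with i.i.d.\ $Z^{(2)}_C\sim\mathcal N(0,\|\sigma\|_2^2)$, so that both sides share a common law up to $o(1)$.

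The main obstacle will be making the various approximations quantitative in the L\'{e}vy metric. The two sources of error, namely (i) the boundary event where a cluster straddles an $r_1$- or $N/r_2$-box, and (ii) the residual within-cluster and across-cluster fluctuations of $\xi^{(2)}$, are both Gaussian perturbations of variance $o(1)$ on the random set $W$, and are absorbed through Lemmas~\ref{lem-robmax} and~\ref{lem-epsilon-domination}. The order of limits must be chosen carefully: first send $N\to\infty$ (activating Assumption (A.1) and Lemma~\ref{lem-loc1}), then $r_1,r_2\to\infty$ with $r/r_1,\, r/r_2\to 0$, then $r\to\infty$, and finally $K\to\infty$, so that every error term in the chain of comparisons vanishes in the appropriate order.
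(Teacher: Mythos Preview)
Your approach is correct and follows essentially the same line as the paper's proof: both arguments use Lemma~\ref{lem-loc1} to localize the near-maxima of $\varphi_{N,\cdot}$ into well-separated $r$-clusters, and then exploit that the two noise fields $\xi^{(1)},\xi^{(2)}$ have (conditionally on $\varphi_{N,\cdot}$) covariance matrices on the relevant set that agree up to $o_N(1)$ entrywise.

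The packaging differs slightly. The paper does not form abstract clusters; instead it partitions $V_N$ into $N/r_2$-boxes, restricts to their $\delta$-interiors $B_\delta$ (handling your ``boundary event'' via \eqref{eq-maxrtflucrttail2}), and picks the $\varphi$-maximizer $z_B$ in each box as a single representative. The key reduction is then \eqref{eq-fine-field}: both $\tilde M$ and $M^*$ are, up to $1/\log\log N$, equal to the respective maxima over the fixed finite set $\{z_B\}$. The conclusion follows because the covariance matrices of $\xi^{(1)}$ and $\xi^{(2)}$ on $\{z_B\}$ differ by $O((\log r_2)/\log N)$ entrywise, so the two $r_2^d$-dimensional Gaussian vectors are close in law. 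This sidesteps your intermediate step of expressing both maxima in the common form $\max_C(\max_{v\in C}\varphi_{N,v}+Z_C)$ with i.i.d.\ $Z_C$, and thereby avoids having to couple $\xi^{(2)}$ to i.i.d.\ noise via a separate argument. Your route is a bit longer but equally valid; the invocation of Lemma~\ref{lem-robmax} to absorb the residual $o(1)$ Gaussian fluctuations works because $\{\varphi^*_{N,\sigma,v}\}$ is itself log-correlated (indeed, equal in law to $a_N\varphi_{N,\cdot}$), so Proposition~\ref{prop-maxrtfluc} applies to it.
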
 
\begin{proof}
Partition $V_N$ into boxes of side length $N/r_2$ and denote by 
$\mathcal B$ the collection of boxes. Fix an arbitrary small 
$\delta>0$, and let $B_\delta$ denote the box in the center of 
$B$ with side length $(1-\delta)N/r_2$ for each $B\in \mathcal B$. 
Write $V_{N, \delta} = \cup_{B\in \mathcal B} B_\delta$. 
Set $\tilde M_{N, r_1, r_2, \sigma,  \delta} = 
\max_{v\in V_{N, \delta}} \tilde \varphi_{N, r_1, r_2, \sigma, v}$ 
and $M^*_{N, \sigma,  \delta} = \max_{v\in V_{N, \delta}} 
\varphi^*_{N, \sigma, v}$. 
By \eqref{eq-maxrtflucrttail2}, one has
$$\lim_{\delta \to 0} \lim_{N\to \infty}\P(\tilde M_{N, r_1, 
r_2 ,\sigma, \delta} \neq \tilde M_{N, r_1, r_2, \sigma}) = 
\lim_{\delta \to 0} \lim_{N\to \infty} \P(M^*_{N, \sigma,  
\delta} \neq M^*_{N, \sigma}) = 0\,.$$
Therefore, it suffices to prove
\eqref{eq-paris4} with
$\tilde M_{N, r_1, r_2, \sigma, \delta}$ and 
$M^*_{N, \sigma,  \delta}$ replacing 
$\tilde M_{N, r_1, r_2, \sigma}$ and 
$M^*_{N, \sigma}$.
To this end, let $z_{B}$ be such that 
$$ \max_{v \in B_\delta } \varphi_{N, v} = 
\varphi_{N, z_B} \mbox{ for every } B\in \mathcal B\,.$$
We will show below that 
\begin{align}\label{eq-fine-field}
&\lim_{r_1, r_2\to \infty} \limsup_{N\to \infty} 
\P(|\tilde M_{N,r_1, r_2, \sigma, \delta} - \max_{B\in \mathcal B} 
\tilde\varphi_{N, r_1, r_2, \sigma, z_B} | \geq 1/\log\log N) \nonumber \\
=& \limsup_{N\to \infty} \P( |M^*_{N, \sigma, \delta} - \max_{B\in \mathcal B} \varphi^*_{N, \sigma, z_B} | \geq 1/\log\log N) = 0\,.
\end{align}
 Note that conditioning on the field $\{\varphi_{N, v}: v\in V_N\}$, 
 the field $\{ \sqrt{{\|\sigma\|_2^2}/{\log N}} \varphi'_{N, v}: 
 B\in \mathcal B\}$ is centered Gaussian field with 
 pairwise correlation bounded by $O(1/\log N)$. Therefore
 the conditional covariance matrix of $\{ \sqrt{\frac{\|\sigma\|_2^2}{\log 
 N}}\varphi'_{N, z_B}: B\in \mathcal B\}$ and that of
 $\{\sigma_1 g_{B_{z_B, r_1}} + \sigma_2 g_{B_{z_B, N/r_2}}: 
 B\in \mathcal B\}$ are within additive $O(1/\log N)$ of each other 
 entrywise. Combined with \eqref{eq-fine-field}, it then yields the proposition.

It remains to prove
\eqref{eq-fine-field}.
Write $r = r_1 \wedge r_2$ and
let $C$ be a constant
which we will send to infinity after sending 
first $N\to \infty$ and then $r\to\infty$,
and let $c$ be the constant from Lemma~\ref{lem-loc1}. 
Suppose that either of  the events 
that are considered in \eqref{eq-fine-field} occurs. In this case, 
one of the following events has to occur:
\begin{itemize}
\item The event $E_1 = \{\tilde{M}_{N, r_1, r_2, \sigma,  \delta} 
\not\in (m_N - C, m_N + C)\} \cup \{M^*_{N, \sigma,  \delta} 
\not\in (m_N - C, m_N + C)\}$.
\item The event $E_2$ that there exists $u, v \in (r,N/r)$ 
	such that $\varphi_{N, u} \wedge \varphi_{N, v} > m_N - c\log \log r$.
\item The event $E_3 = \tilde E_3 \cup E^*_3$ where $\tilde E_3$ ($E^*_3$) 
	is the event that $\tilde{M}_{N, r_1, r_2, \sigma} $ 
	($M^*_{N, \sigma,  \delta}$) is achieved at a vertex
	$v$ such that $\varphi_{N, v} \le m_N - c\log \log r$.
\item The event $E_4$ that there exists $v\in B\in \mathcal B$ with $\varphi_{N, v} \geq m_N - c\log\log r$ and 
$$ \sqrt{\tfrac{\|\sigma\|_2^2}{\log N}} \varphi'_{N, v}  -  \sqrt{\tfrac{\|\sigma\|_2^2}{\log N}} \varphi'_{N, z_B} \geq \tfrac{1}{\log\log N}\,.$$
\end{itemize} 
By Theorem \ref{thm-tightness}, 
$\lim_{C\to \infty} \limsup_{N\to \infty} \P(E_1) = 0$. 
By Lemma \ref{lem-loc1}, 
$\lim_{r\to \infty} \limsup_{N\to \infty}\P(E_2) = 0$. 
In addition, writting $\Gamma_x =\{ v \in V_N : \tilde{\varphi}_{N, r_1, r_2, 
\sigma, v} - \varphi_{N, v} \in (x, x+1) \}$, one has
\begin{align*}
\P(E_1^c \cap \tilde E_3) & \leq 
\P( \max_{x \ge c\log\log r - C} \max_{ v \in \Gamma_x } 
\tilde{\varphi}_{N, r_1, r_2, \sigma, v} \ge m_N - C )\\ 
& \le 
\sum_{x \ge c\log\log r - C} \P(  \max_{ v \in \Gamma_x } 
\tilde{\varphi}_{N, r_1, r_2, \sigma, v} \ge m_N-C ) \\
& \le  \sum_{x \ge c\log \log r -C}\E( \P( \max_{ v \in \Gamma_x }
{\varphi}_{N, v} \ge m_N -x - C   | \Gamma_x )) \\
& 
\lesssim_C 
\sum_{x \ge c\log\log r-C}\E( | \Gamma_x |/N^d )^{1/2}x e^{\sqrt{2 d} x} \,,
\end{align*}
where the last inequality follows from \eqref{eq-maxrtflucrttail2}.
From simple estimates using the Gaussian distribution
one has $\E (| \Gamma_x|/N^d)^{1/2} \leq e^{-c' x^2}/c'$ where 
$c' = c'(\sigma) >0$. Therefore, one concludes that
$$\limsup_{C\to \infty}\limsup_{r\to \infty} 
\limsup_{N\to \infty}\P(E_1^c\cap \tilde E_3) = 0\,.$$ 
A similar argument leads to the same estimate with $E^*_3$ replacing
$E_3$. Thus,
$$\limsup_{C\to \infty}\limsup_{r\to \infty} \limsup_{N\to \infty}\P(E_1^c\cap E_3) = 0\,.$$

Finally, let $\Gamma'_r = \{v: \varphi_{N, v} \geq m_N - c\log\log r\}$.
On the event $E_2^c$, one has  $|\Gamma'_r| \leq r^4$. 
Further, for each $v\in B\cap \Gamma'_r$, on $E_2^c$ one
has 
$|v - z_B| \leq r$ and thus (by the
independence between $\{\varphi_{N, v}\}$ and $\{\varphi'_{N, v}\}$),
$$\P( \sqrt{\tfrac{\|\sigma\|_2^2}{\log N}} \varphi'_{N, v}  -  \sqrt{\tfrac{\|\sigma\|_2^2}{\log N}} \varphi'_{N, z_B} \geq 1/\log\log N) = o_N(1)\,.$$
Therefore, a union bound gives that 
$$\limsup_{r\to \infty}\limsup_{N\to \infty}\P(E_4\cap E_2^c) \leq \limsup_{r\to \infty}\limsup_{N\to \infty} r^4 o_N(1) =0\,.$$
Altogether, this completes the proof of 
\eqref{eq-fine-field} and hence 
of the proposition.
\end{proof}

\begin{proof} [Proof of Lemma~\ref{lem-nfmax}]
Define \begin{equation*}
                  \bar{\varphi}_{N, \sigma, v} = \left(1 + \frac{ \|\sigma\|_2^2 }{2\log N}\right) \varphi_{N, v} \mbox{ for } v\in V_N\,, \mbox{ and } \bar{M}_{N, \sigma}=  \max_{v \in V_N} \bar{\varphi}_{N, v}\,.
                    \end{equation*}
Clearly we have $\bar M_{N, \sigma}  = (1+ \frac{\|\sigma\|_2^2}{2\log N})M_N $. Combined with \eqref{eq-thm-expectation}, it gives that $\E \bar M_{N, \sigma} = \E M_N + \sigma^2 \sqrt{d/2} + o(1)$ and that $d(M_N - \E M_N,\bar{M}_{N, \sigma} - \E \bar{M}_{N, \sigma} ) \rightarrow 0$ as $N\to \infty$. 
Further define $\{\varphi^*_{N, \sigma, v}: v\in V_N\}$ as in \eqref{eq-def-phi*}.
By the fact that the field $\{\bar \varphi_{N, \sigma, v}\}$ can be seen as a sum of $\{\varphi^*_{N, \sigma,v}\}$ and an independent field whose variances are $O((1/\log N)^3)$ across the field, we see that $\E \bar M_{N, \sigma} = \E M^*_{N, \sigma} + o(1)$ and that
\begin{equation}\label{eq-M-N-M*-N}
d(\bar M_{N, \sigma} - \E \bar M_N,M^*_{N, \sigma} -  \E M^*_N ) \rightarrow 0\,.
\end{equation}
Combined with Proposition~\ref{prop-equivfields}, this completes the proof of the lemma.
\end{proof}

\begin{proof}[Proof of Lemma~\ref{lem-epsilon-domination}] 
	Let $\phi$ and $\phi_{N, v}$ be i.i.d.\ standard Gaussian variables, 
	and for $\epsilon^*>0$ let 
$$\varphi_{\mathrm{lw}, N, \epsilon^*, v} = (1 - \epsilon^*/\log N)  
\varphi_{N,v}+ \epsilon'_{N, v} \phi \mbox { and } 
\bar \varphi_{\mathrm{up}, N, \epsilon^*, v} = 
(1 - \epsilon^*/\log N) \bar \varphi_{N, v} + \epsilon''_{N, v} \phi_{N, v}\,,$$
where $\epsilon'_{N, v}, \epsilon''_{N, v}$ are chosen so that 
$\var \varphi_{\mathrm{lw}, N , \epsilon^*, v} = \var \bar \varphi_{\mathrm{up}, N, 
\epsilon^*, v}  = \var \varphi_{N, v} + \epsilon$. We can choose 
$\epsilon^* = \epsilon^*(\epsilon)$ with $\epsilon^*\to_{\epsilon\to 0} 0$ so that 
$\E \varphi_{\mathrm{lw}, N, \epsilon^*, v} \varphi_{\mathrm{lw}, N, \epsilon^*, u}
\geq \E  \bar \varphi_{\mathrm{up}, N, \epsilon^*, v}  
\bar \varphi_{\mathrm{up}, N, \epsilon^*, u} $ for all $u, v\in V_N$. 
By Lemma~\ref{lem-slepian}, one has
$$\tilde d (\max_{v\in V_N} \varphi_{\mathrm{lw}, N, \epsilon^*, v} -  m_N, 
\max_{v\in V_N} \bar \varphi_{\mathrm{up}, N, \epsilon^*, v} -  m_N ) = 0\,.$$
Combined with Lemma~\ref{lem-robmax}, this completes the proof of the lemma.
\end{proof}

\section{Proofs of  
Theorems~\ref{thm-convergence} and \ref{explicit-limit-law}}  \label{sec:approx}

In this section we assume (A.0)--(A.3) and
prove Theorem~\ref{thm-convergence}. Toward
this end, in Subsection~\ref{sec:decomposition} we will approximate the field 
$\{\varphi_{N, v}: v\in V_N\}$ by a simpler to analyze field, in such a way that
the results of Section \ref{sec-robust} apply and yield the asymptotic
equivalence of their respective laws of the centered maximum.
In Subsection~\ref{sec:limit-law} we prove the convergence in law for the centered maximum of the new field. Our method of proof yields 
Theorem~\ref{explicit-limit-law} as a byproduct.

\subsection{An approximation of the log-correlated Gaussian field} 
\label{sec:decomposition}

In this subsection, we approximate the log-correlated Gaussian field. 
Let $R_N(u,v)=E(\varphi_{N,u}\varphi_{N,v})$.
We consider three scales for the approximation of the field
$\{\varphi_{N,v}\}$:
\begin{enumerate}
	\item
The top (macroscopic) scale, dealing with $R_N(u,v)$ for
$|u-v| \asymp N$.
\item  The bottom (microscopic) 
	scale, dealing with $R_N(u,v)$  for
	$|u-v| \asymp 1$.
\item  The middle (mesoscopic) scale, dealing with $R_N(u,v)$ for
$1\ll |u-v| \ll N$. 
\end{enumerate}
By Assumptions (A.2) and (A.3), $R_N(u,v)$, properly centered, 
converges in the top and bottom scale. So in those scales,
we approximate $\{ \varphi_{N, u}\}$ by the
corresponding ``limiting'' fields. 
In the middle scale, we simply approximate $\{\varphi_{N, u}\}$ by the MBRW. 
One then expects that this approximation gives an additive $o(1)$ 
error for $R_N(u,v)$  in the top and bottom scale, and an 
additive $O(1)$ error in the middle scale. 
It turns out that this guarantees that the limiting laws 
of the centered maxima coincide.

In what follows, for any integer $t$ we refer to
a box of side length $t$ as an $t$-box.
Take two large integers $L = 2^\ell$ and $K=2^k$. Consider first
$\{\varphi_{KL, u}: u\in V_{KL}\}$ in a $KL$-box whose left-bottom corner 
is identified as the origin, and let 
$\Sigma$ denote its covariance matrix.

Recall that
by Proposition~\ref{prop-maxrtfluc}, with probability tending to 1 as 
$N\to \infty$, the maximum of $\varphi_{N,v}$ 
over $V_N$ occurs in a sub-box of $V_N$ with side length 
$\lfloor N/KL \rfloor \cdot KL$. 
Therefore, one may neglect the maximization over the indices in $V_N\setminus 
V_{\lfloor N/KL \rfloor \cdot KL}$. For notational
convenience, we will 
assume  throughout that $KL$ divides $N$ in what follows. 

We use $\Sigma$ to approximate the macroscopic scale of $R_N(u,v)$, 
as follows.
Partition $V_N$ into a disjoint union of boxes of side length 
$N/KL$, denoted
$\mathcal B_{N/KL} = \{B_{N/KL, i}: i=1, \ldots, (KL)^d\}$. 
Let $v_{N/KL, i}$ be the left bottom corner of box 
$B_{N/KL, i}$ and write $w_i = \frac{v_{N/KL, i}}{N/KL}$. 
Let $\Xi^c$ be a matrix of dimension $N^d \times N^d$ such 
that $\Xi^c_{u, v} = \Sigma_{w_i, w_j}$ for $u\in B_{N/KL, i}$ and 
$v\in B_{N/KL, j}$. Note that $\Xi^c$ is a positive 
definite matrix with diagonal terms $\log (KL) + O_{KL}(1)$. 

Next, take  two other 
integers $K' = 2^{k'}$ and $L' = 2^{\ell'}$. As above, we assume that 
$K'L'$ divides $N$. Consider 
$\{\varphi_{K'L', u}: u\in V_{K'L'}\}$ in a $K'L'$-box 
whose left-bottom corner is identified as the origin, and
denote by $\Sigma'$ the covariance matrix for 
$\{\varphi_{K'L', u}: u\in V_{K'L'}\}$. As above, 
assume for notational convenience that $K'L'$ divides $N$.
Partition $V_N$ into a 
disjoint union of boxes of side length $K'L'$, 
denoted 
$\mathcal B_{K'L'} = \{B_{K'L', i}: i=1, \ldots, (N/K'L')^d\}$.
Let $v_{K'L', i}$ be the left bottom corner of $B_{K'L', i}$. 
Let $\Xi^b$ be a matrix of dimension 
$N^d \times N^d$ so that 
$$\Xi^b_{u, v} = \left\{
	\begin{array}{ll}
		\Sigma'_{u - v_{K'L', i}, v - v_{K'L', i}}, &  
u, v \in B_{K'L', i}\\
0, & 
u \in B_{K'L', i}, v\in B_{K'L', j}, i\neq j
\end{array}
\right. . 
$$
Note that $\Xi^b$ is a positive definite matrix with diagonal terms 
$\log (K'L') + O_{K'L'}(1)$.

Let  $\{ \xi_{N,v}^c : v \in V_N \}$ be a Gaussian field with 
covariance matrix $\Xi^c$, which we occasionally refer to as the coarse 
field, and let $\{ \xi^b_{N, v} : v \in V_N \}$ be a Gaussian field with 
covariance matrix $\Xi^b$, which we occasionally refer to as the bottom field. 
Note that the coarse field is constant in each box $B_{N/KL, i}$, 
and the bottom fields in different boxes $B_{K'L', i}$ are independent of 
each other.

We will consider the limits when $L,K, L', K'$ are sent to infinity in 
that order. In what follows, we denote by $(L,K,L',K') \Rightarrow 
\infty$ sending these parameters to infinity in the order of $K', L', K, L$ 
(so $K'\gg L' \gg K \gg L$).

\begin{figure}[ht]
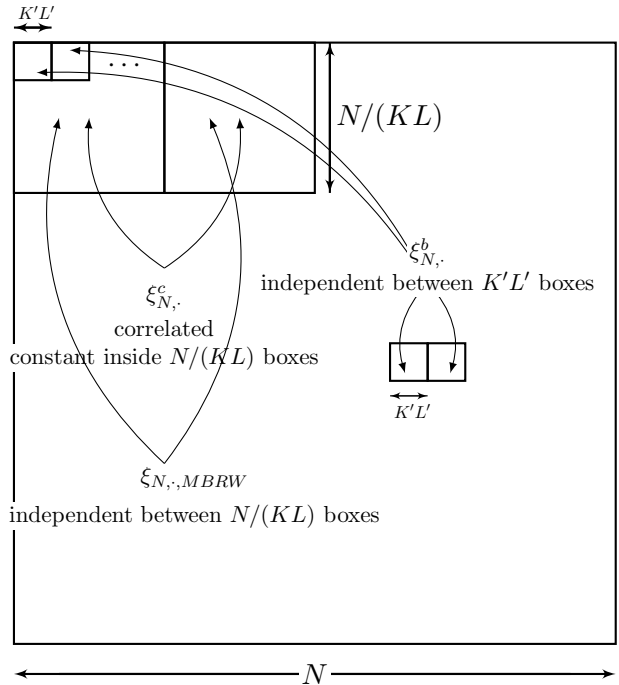

	\begin{center}
		\tikz[scale=2]{
			\draw[black, thick](0,0) -- (0,4) -- (4,4) -- (4,0) -- cycle;
			\draw[black, thick](0,4) -- (1,4) -- (1,3) -- (0,3) -- cycle;
			\draw[black, thick](1,4) -- (2,4) -- (2,3) -- (1,3) -- cycle;
			\draw[black, thick](0,4) -- (0.25,4) -- (0.25,3.75) -- (0,3.75) -- cycle;
			\draw[black, thick](0.25,4) -- (0.5,4) -- (0.5,3.75) -- (0.25,3.75) -- cycle;
			\node[ann] at (0.75, 3.85) {$\cdots$};
			\node[ann] at (2.5,  3.5) {$\cdots$};
			\draw[black, thick](2.5,2) -- (2.75,2) -- (2.75,1.75) -- (2.5,1.75) -- cycle;
			\draw[black, thick](2.75,2) -- (3,2) -- (3,1.75) -- (2.75,1.75) -- cycle;
			\draw[arrows=<->, -latex',  thin] (2.5, 1.65) -- (2.75,1.65);
			\draw[arrows=<->, -latex',  thin] (2.75, 1.65) -- (2.5,1.65);
			\node[ann, scale=.6] at (2.65, 1.55) {$K'L'$};
			\draw[arrows=<->, -latex',  thick] (0.3, -0.2) -- (4,-0.20);
			\draw[arrows=<->, -latex',  thick] (3.7, -0.2) -- (0,-0.20);
			\node[ann] at (2, -0.2) {$N$};
			\draw[arrows=<->, -latex',  thick] (2.1, 4) -- (2.1,3);
			\draw[arrows=<->, -latex',  thick] (2.1, 3) -- (2.1,4);
			\draw[arrows=<->, -latex',  thick] (0, 4.1) -- (0.25,4.1);
			\draw[arrows=<->, -latex',  thick] (0.25, 4.1) -- (0,4.1);
			\node[ann, scale=.6] at (0.15,4.2) {$K'L'$};
			\node[ann] at (2.5,3.5) {$N/(KL)$};
			\draw[->, bend right, -latex] (2.75,2.4) to (2.6, 1.8);
			\draw[->, bend left, -latex] (2.75,2.4) to (2.9, 1.8);
			\draw[->, bend right, -latex] (2.75,2.4) to (0.15,3.80);
			\draw[->, bend right, -latex] (2.75,2.4) to (0.37,3.95);
			\node[ann, scale=.8] at (2.75, 2.6) {$\xi^b_{N,\cdot}$};
			\node[ann, scale=.8] at (2.75, 2.4) {independent between $K'L'$ boxes};
			\draw[->, bend left, -latex] (1,2.5) to (0.5, 3.5);
			\draw[->, bend right, -latex] (1,2.5) to (1.5, 3.5);
			\node[ann, scale=.8] at (1, 2.3) {$\xi^c_{N,\cdot}$};
			\node[ann, scale=.8] at (1, 2.1) {correlated};
			\node[ann, scale=.8] at (1, 1.9) {constant inside $N/(KL)$ boxes};
			\draw[arrows=<->, bend left, -latex] (1,1.2) to (0.3, 3.5);
			\draw[arrows=<->, bend right, -latex] (1,1.2) to (1.3, 3.5);
			\node[ann, scale=.8] at (1.2, 1.1) {$\xi_{N,\cdot,MBRW}$};
			\node[ann, scale=.8] at (1.2, 0.85) {independent between $N/(KL)$ 
			boxes};
		}
		\caption{Hierarchy of construction of the approximating Gaussian field}
		\label{fig-perturbation1}
	\end{center}
\end{figure}

Finally,
we give the MBRW approximation for the mesoscopic scales. 
Recall the definitions of $\mathcal B_j^N$ and $\mathcal B_j(v)$ 
in Subsection~\ref{sec:MBRW}, and recall that $\{ b_{i, k,B} : k \ge 0, 1\leq i\leq (KL)^d, 
B \in \mathcal{B}_k^N\}$ is a family of independent Gaussian 
variables such 
that $ \var b_{i, j,  B} = \log 2\cdot  2^{-dj}$ for all 
$B\in \mathcal B_j^N$ and $1\leq i\leq (KL)^d$. 
For $v\in B_{N/KL, i} \cap B_{K'L', i'}$ (where $i=1, \ldots, (KL)^d$ and $i' = 1, \ldots, (N/K'L')^d$), define
\begin{equation}\label{eq-xi-MBRW}
 {\xi}_{N, v, \mathrm{MBRW}}= \sum_{j=\ell' + k'}^{n-k-\ell} \sum_{B \in \mathcal{B}_j (v_{K'L', i'})} b^N_{ i, j, B} \,.
 \end{equation}
 Note that by our construction $\{\xi_{N, v, \mathrm{MBRW}}: v\in B_{N/KL, i}\}$ are independent of each other for $i=1, \ldots, (KL)^d$, and in addition $\xi_{N, \cdot, \mathrm{MBRW}}$ is constant over each $K'L'$-box. 
Further, let $\{ \xi_{N, v}^c : v \in V_N \}$, $\{ \xi_{N, v}^b :
v \in V_N \}$ and $\{\xi_{N, v, \mathrm{MBRW}}: v\in V_N\}$ be
independent of each other. One has by Assumption (A.1) that
$$|\var (\xi_{N, v}^c + \xi_{N, v}^b + \xi_{N, v, \mathrm{MBRW}}) - 
\var \varphi_{N, v}| \leq 4\alpha\,.$$
Let $a_{N, v}$ be a sequence of numbers such that for all 
$v\in B_{N/KL,i}$ and all $1\leq i\leq (KL)^d$, 
\begin{equation}
	\label{eq-paris6}
	\var ( \xi_{N, v}^c + \xi_{N,  v}^b + \xi_{N, v, \mathrm{MBRW}}) + a^2_{N, v}  = \var \varphi_{N, v} + 4\alpha\,.
\end{equation}
(Here, the sequence $a_{N, v}$ implicitly depends on $(KL)$.)
It is clear that 
\begin{equation}\label{eq-bound-a-N-v}
\max_{v\in V_N} a_{N, v} \leq \sqrt{8\alpha}\,.
\end{equation}
For $ v\in B_{N/KL, i} \mbox{ and } v \equiv \bar v \mod K'L'$, one has
\begin{align*}
a_{N, v}^2 & = \var \varphi_{N, v} + 4 \alpha - \var\varphi_{KL, w_i} - \var \varphi_{K'L', \bar v} - \log(\tfrac{N}{KLK'L'})\\
 &= \log N - \log (KL) + \epsilon_{N, K L, K' L' } + 4 \alpha - 
 \var \varphi_{K'L', \bar v} - \log(\tfrac{N}{KLK'L'})\geq 0,
\end{align*}
where, by Assumptions (A.2),
\begin{equation}\label{eq-epsilon-N-K-L}
\limsup_{(L,K,L',K') \Rightarrow \infty} \limsup_{N\to \infty} 
\epsilon_{N, K L, K' L'} = 0\,.
\end{equation}
Therefore, one can write 
\begin{equation}\label{eq-a-decomposition}
a_{N,v}^2 = a^2_{K', L', \bar v} + \epsilon_{N, K L, K' L'}\,, 
\end{equation}
where $a_{K' L', \bar v}$ depends on $(K' L', \bar v)$.
By Assumption (A.2) and the continuity of $f$, one has
$$\limsup_{(L,K,L',K') \Rightarrow \infty}  
\sup_{u,v: \|u-v\|_\infty\leq L'}
\limsup_{N\to \infty} |\var \xi_{N, v}^b - \var \xi_{N, u}^b| = 0.$$ 
Therefore, we can further require that
\begin{equation}\label{eq-continuity-a}
|a_{K' L', \bar v} - a_{K' L' ,  \bar u}| \leq \epsilon_{N, K L, K' L'} 
\mbox{ for all } \|\bar v - \bar u\|_\infty \leq L'\,.
\end{equation}
Let $\phi_j$ be i.i.d.\ standard Gaussian variables. For 
$ v\in  B_{K'L', j} \mbox{ and } v \equiv \bar v \mod K'L'$, define 
\begin{equation}\label{eq-def-xi}
\xi_{N, v} =  \xi_{N, v}^c + \xi_{N,  v}^b + \xi_{N, v, \mathrm{MBRW}}+ 
a_{K'L', \bar v} \phi_j\,.
\end{equation}
It follows from \eqref{eq-paris6} and \eqref{eq-a-decomposition} that
\begin{equation}\label{eq-var-approx}
\limsup_{(L,K,L',K') \Rightarrow \infty} 
\limsup_{N\to \infty} |\var \xi_{N, v} - \var \varphi_{N, v} - 4 \alpha| = 0\,.
\end{equation}
Finally, we partition $V_N$ into a disjoint union of boxes of side 
length $N/L$ which we denote by 
$\mathcal B_{N/L} = \{B_{N/L, i}: 1\leq i\leq L^d\}$, as well as
a disjoint union of boxes of side length $L$ which we denote by 
$\mathcal B_L = \{B_{L, i}: 1\leq i\leq (N/L)^d\}$. Again, we denote by 
$v_{N/L, i}$ and $v_{L, i}$ the left bottom corner of the boxes 
$B_{N/L, i}$ and $B_{L, i}$, respectively.

For $\delta>0$ and any box $B$, denote by $B^\delta\subseteq B$ the 
collection of all vertices in $B$ that are $\delta \ell_B$ away from 
its boundary $\partial B$ (here $\ell_B$ is the side length of $B$). 
Let 
$$V^*_{N, \delta} = (\cup_i B^\delta_{N/L, i} )\cap (\cup_i B^\delta_{N/KL, i}) \cap ( \cup_i B_{L, i}^\delta) \cap (\cup_i B_{KL, i} )\,.$$
One has $|V^*_{N, \delta}| \geq (1-100d\delta) |V_N|$. 

The following lemma suggests that $\{\xi_{N, v}: v\in V_N\}$ is a good approximation of $\{\varphi_{N, v}: v\in V_N\}$.
\begin{lemma}\label{lem-xi-approx}
Let 
Assumptions (A.1), (A.2) and (A.3) hold. Then there 
exist $\epsilon'_{N, K, L, K', L' }>0$ with 
$
\limsup_{(L,K,L',K') \Rightarrow \infty} 
\limsup_{N\to \infty} \epsilon'_{N, K, L, K', L'} = 0\,,
$
such that the following hold for all $u, v\in V^*_{N, \delta}$ :

\noindent (a) If $u, v\in B_{L', i}$ for some $1\leq i\leq (N/L')^d$, then $|\E(\xi_{N, u} - \xi_{N, v})^2 - \E (\varphi_{N, u} - \varphi_{N, v})^2 |\leq \epsilon'_{N, K, L, K', L' }$.

\noindent (b) If $u \in B_{N/L, i}$, $v\in B_{N/L, j}$ with $i\neq j$, then $|\E \xi_{N, u} \xi_{N, v} - \E\varphi_{N, v} \varphi_{N, u}| \leq \epsilon'_{N, K, L, K', L' }$.

\noindent (c) Otherwise, $|\E \xi_{N, u} \xi_{N, v} - \E\varphi_{N, v} \varphi_{N, u}| \leq 4\log (1/\delta) + 40\alpha$.
\end{lemma}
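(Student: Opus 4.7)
The plan is to exploit the mutual independence of the four components $\xi^c,\xi^b,\xi_{N,\cdot,\mathrm{MBRW}}$, and $\{a_{K'L',\bar v}\phi_j\}$ in the definition \eqref{eq-def-xi}, and to match each contribution against the corresponding behavior of $\varphi_{N,\cdot}$ extracted from (A.1), (A.2), (A.3), and Lemma~\ref{lem-covapp}. A recurring ingredient is the nesting of the partitions: since the ratios $K'L'/L'$, $(N/KL)/(K'L')$, and $(N/L)/(N/KL)$ are all integers, the partitions into $L'$-, $K'L'$-, $N/KL$-, and $N/L$-boxes may be (and will be) chosen aligned, so that being in a common smaller box implies being in a common larger box.

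First I will handle case (a). Because $(L,K,L',K')\Rightarrow \infty$ is taken after $N\to\infty$, we have $L'\ll K'L'\ll N/KL$, so points $u,v\in B_{L',i}$ share a common $K'L'$-box and a common $N/KL$-box. Hence $\xi^c_{N,u}=\xi^c_{N,v}$, $\xi_{N,u,\mathrm{MBRW}}=\xi_{N,v,\mathrm{MBRW}}$, and $\phi_{j_u}=\phi_{j_v}$, which gives
\[
\E(\xi_{N,u}-\xi_{N,v})^2 = \E(\xi^b_{N,u}-\xi^b_{N,v})^2 + (a_{K'L',\bar u}-a_{K'L',\bar v})^2.
\]
The second summand is $o(1)$ by \eqref{eq-continuity-a} and \eqref{eq-bound-a-N-v}. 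For the first, the identity $\E(\xi^b_{N,u}-\xi^b_{N,v})^2=\E(\varphi_{K'L',\bar u}-\varphi_{K'L',\bar v})^2$ combined with (A.2) applied at macroscopic position $\bar u/(K'L')$ yields $2g(0,0)-2g(0,v-u)+o(1)$; the same (A.2), applied this time to $\varphi_N$ at macroscopic position $u/N$, gives the identical expression for $\E(\varphi_{N,u}-\varphi_{N,v})^2$. The key point here is that the $\log N$ and $f(x)$ contributions cancel in the near-diagonal increment, so that only the position-independent $g$-kernel survives.

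For case (b), the nesting forces $u,v$ into different $N/KL$-boxes, hence into different top-indices $i$ in the MBRW construction, so $\Cov(\xi_{N,u,\mathrm{MBRW}},\xi_{N,v,\mathrm{MBRW}})=0$. Likewise $\xi^b_{N,u}\perp\xi^b_{N,v}$ and the $\phi_{j_u}\perp \phi_{j_v}$ pair contribute $0$, since $K'L'\ll N/L$ forces $u,v$ into different $K'L'$-boxes as well. Thus
\[
\Cov(\xi_{N,u},\xi_{N,v})=\Cov(\varphi_{KL,w_i},\varphi_{KL,w_j}).
\]
Since $|w_i/KL-w_j/KL|\geq 1/(2L)$ and $|w_i/KL-u/N|\leq 1/(KL)$, Assumption (A.3) applied to $\varphi_{KL}$ and to $\varphi_N$, together with the continuity of $h$, shows that both sides converge to $h(u/N,v/N)$, giving the required $o(1)$ bound.

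Case (c) is the most delicate and will be the main obstacle. Here ``same $N/L$-box but different $L'$-box'' splits into several sub-cases according to whether $u,v$ share a common $N/KL$-box and a common $K'L'$-box. In each sub-case I will bound $\Cov(\xi_{N,u},\xi_{N,v})$ term-by-term using (A.1) for $\xi^c$ and $\xi^b$, Lemma~\ref{lem-covapp} applied to the restricted-scale MBRW at scales $[\ell'+k',n-k-\ell]$ for $\xi_{N,\cdot,\mathrm{MBRW}}$, and \eqref{eq-bound-a-N-v} for the $a\phi$-term; on the $\varphi_N$ side (A.1) gives $\log N-\log_+|u-v|+O(\alpha)$. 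The partial $\log$-factors telescope to $\log N-\log_+|u-v|$ up to $O(\alpha)$, except near box boundaries where the MBRW covariance saturates at its variance instead of continuing to track $\log|u-v|$. These boundary discrepancies are controlled by observing that $u,v\in V^*_{N,\delta}$ together with ``different $L'$-boxes''\ forces $u,v$ into different $L$-boxes (by nesting), and hence $|u-v|\geq 2\delta L$, which translates the remaining mismatch into at most $4\log(1/\delta)+40\alpha$. Carrying out this bookkeeping across sub-cases will require careful use of the relation $|u-v|-O(K'L')\leq |v_{K'L',i'_u}-v_{K'L',i'_v}|\leq |u-v|+O(K'L')$ and the corresponding estimate at the $N/KL$-scale.
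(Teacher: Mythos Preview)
Your proposal is correct and follows essentially the same strategy as the paper's proof. Parts (a) and (b) coincide with the paper's argument (you merely spell out the $g$-kernel cancellation in (a), whereas the paper applies (A.2) directly to both increments), and in (c) your sub-case bookkeeping is exactly what the paper compresses into the single telescoping display
\[
\E\xi_{N,u}\xi_{N,v}=\log KL-\log_+\tfrac{|u-v|}{N/KL}+\mathbf{1}_{|u-v|\le N/KL}\Bigl(\log\tfrac{N}{KLK'L'}-\log_+\tfrac{|u-v|}{K'L'}\Bigr)+\mathrm{err}_2,
\]
with $|\mathrm{err}_2|\le 2\log(1/\delta)+20\alpha$.
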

\begin{proof}
\noindent {\bf (a): } Let $i'$ be such that $B_{L', i} \subseteq 
B_{K'L', i'}$. By \eqref{eq-continuity-a} and \eqref{eq-def-xi}, one has
$$|\E (\xi_{N, u} - \xi_{N, v})^2  -  \E (\varphi_{KL, u - v_{KL, i'}} - 
\varphi_{KL, v - v_{KL, i'}})^2| \leq 4 \epsilon_{N, K L, K' L'}\,,$$
where $\epsilon_{N, K L, K' L'}$ satisfies \eqref{eq-epsilon-N-K-L} 
(and was defined therein).
By Assumption (A.2), one has
$$\limsup_{(L,K,L',K') \Rightarrow \infty} 
\limsup_{N\to \infty} |\E (\varphi_{KL, u - v_{KL, i'}} - 
\varphi_{KL, v - v_{KL, i'}})^2- \E (\varphi_{N, u} - \varphi_{N, v})^2| =0\,.$$
Altogether, this completes the proof for (a).

\noindent {\bf (b): } 
Let $i', j'$ be such that $u\in B_{N/KL, i'}$ and $v\in B_{N/KL, j'}$, and
assume w.l.o.g. that
$K'\gg L' \gg K \gg L\gg 1/\delta$. The
definition of $\{\xi_{N, v}\}$ gives 
$$\E \xi_{N, v} \xi_{N, u} = \E \varphi_{KL, w_{i'}} \varphi_{KL, w_{j'}}$$
where $w_{i'} = \frac{v_{N/KL, i'}}{N/KL}$ and $w_{j'} = 
\frac{v_{N/KL, j'}}{N/KL}$. In this case, we have 
$|w_{i'} - w_{j'}| \geq \delta K$. Writing
$x_u = u/N, x_v = v/N$ and $y_u = w_{i'}/KL, y_v = w_{j'}/KL$, one obtains
$$|y_u - y_v| \geq \delta/L, |x_u - x_v| \geq \delta /L, 
|x_u - y_u|\leq 1/K, |x_v - y_v| \leq 1/K\,.$$
Therefore, Assumption (A.3) yields  $\limsup_{(L,K,L',K') 
\Rightarrow \infty} \limsup_{N\to \infty} |\E \xi_{N, u} \xi_{N, v} - 
\E\varphi_{N, v} \varphi_{N, u}| =0$,
completing the proof of (b).

\noindent {\bf (c).} In this case, one has
\begin{align*}
\E \xi_{N, v}
 \xi_{N, u} =& \E \xi_{N, v }^c \xi_{N, u}^c + \E \xi_{N,  v}^b \xi_{N, u}^b + \E \xi_{N, u, \mathrm{MBRW}} \xi_{N, v, \mathrm{MBRW}} +  \mathrm{err_1} \\
 = &\log KL - \log_+ (\tfrac{|u-v|}{N/KL}) \\
  &+  \mathbf{1}_{|u-v| \leq N/KL}(\log \tfrac{N}{(KLK'L')} - \log_+ \tfrac{|u-v|}{K'L'}) + \mathrm{err_2} \\
  =& \log N - \log_+ |u-v| + \mathrm{err_2},
\end{align*}
where  $|\mathrm{err_1}|\leq 8\alpha$ and $|\mathrm{err_2}| \leq 2\log 1/\delta + 20\alpha$. Combined with Assumption (A.1), this completes the proof 
of (c) and hence of the lemma.
\end{proof}
  
\begin{lemma}\label{lem-key-convergence}
Let Assumptions (A.0), (A.1), (A.2) and (A.3) hold. Then,
$$ \limsup_{(L,K,L',K') \Rightarrow \infty}  \limsup_{N\to \infty} 
d(M_N - m_N,  \max_{v\in V_N} \xi_{N, v} - m_N - 2\alpha \sqrt{2d} ) = 0\,.$$
\end{lemma}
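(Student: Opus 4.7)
The strategy is to use Lemma~\ref{lem-nfmax} to absorb the excess variance $4\alpha$ carried by $\xi_{N,v}$ via an explicit perturbation of $\varphi_{N,v}$, and then argue that the remaining covariance mismatch between $\xi$ and the perturbed field is negligible for the law of the maximum. Fix $\sigma=(\sigma_1,\sigma_2)\in\mathbb R^2$ with $\|\sigma\|_2^2=4\alpha$ and consider the doubly perturbed field $\tilde\varphi_{N,L',L,\sigma,v}$ defined in \eqref{eq-def-tilde-phi}. By Lemma~\ref{lem-nfmax},
$$
\limsup_{(L,K,L',K')\Rightarrow\infty}\limsup_{N\to\infty} d\bigl(M_N-m_N,\ \tilde M_{N,L',L,\sigma}-m_N-2\alpha\sqrt{2d}\bigr)=0,
$$
since $\|\sigma\|_2^2\sqrt{d/2}=2\alpha\sqrt{2d}$. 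It therefore suffices to prove
$$
\limsup_{(L,K,L',K')\Rightarrow\infty}\limsup_{N\to\infty} d\bigl(\tilde M_{N,L',L,\sigma},\ \max_{v\in V_N}\xi_{N,v}\bigr)=0.
$$

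The next step is a direct covariance comparison between $\tilde\varphi_{N,L',L,\sigma,\cdot}$ and $\xi_{N,\cdot}$. By construction and \eqref{eq-var-approx}, both fields have variance $\log N+4\alpha+o(1)$ at every vertex. When $u,v$ lie in a common $L'$-box they automatically share both an $L'$-box and an $N/L$-box for $\tilde\varphi$, so $\Cov(\tilde\varphi_u,\tilde\varphi_v)=\Cov(\varphi_u,\varphi_v)+4\alpha$; combining this with Lemma~\ref{lem-xi-approx}(a) (restated as a covariance statement after adding the variance correction $4\alpha$) gives $|\Cov(\xi_u,\xi_v)-\Cov(\tilde\varphi_u,\tilde\varphi_v)|=o(1)$. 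Similarly, when $u,v$ lie in distinct $N/L$-boxes the two block-noise terms in $\tilde\varphi$ cancel, so Lemma~\ref{lem-xi-approx}(b) yields $|\Cov(\xi_u,\xi_v)-\Cov(\tilde\varphi_u,\tilde\varphi_v)|=o(1)$. In the intermediate (mesoscopic) regime where $u,v$ share an $N/L$-box but not an $L'$-box, the discrepancy is only $O(1)$ by Lemma~\ref{lem-xi-approx}(c).

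To handle the mesoscopic $O(1)$ discrepancy — which is outside the scope of Lemma~\ref{lem-epsilon-domination} — the plan is to follow the architecture of the proof of Proposition~\ref{prop-equivfields}. Partition $V_N$ into $L'$-boxes $\{B\}$, work on the slightly shrunk set $V_{N,\delta}$, and let $z_B$ be the $\varphi$-maximizer in each $B^\delta$. I will show, in probability as $(L,K,L',K')\Rightarrow\infty$ and $N\to\infty$,
$$
\max_{v\in V_{N,\delta}}\tilde\varphi_{N,L',L,\sigma,v}\ =\ \max_B\tilde\varphi_{N,L',L,\sigma,z_B}+o(1),\qquad \max_{v\in V_{N,\delta}}\xi_{N,v}\ =\ \max_B\xi_{N,z_B}+o(1).
$$
The key inputs are: Lemma~\ref{lem-loc1}, which rules out mesoscopically separated near-maxima of $\varphi$; Proposition~\ref{prop-maxrtfluc}, to control the contribution of vertices with unusually large block-noise values; and the $L'$-block-constancy of the perturbations (for $\tilde\varphi$ it is $\sigma_1 g_{B_{v,L'}}+\sigma_2 g_{B_{v,N/L}}$, and for $\xi$ it is $\xi^c_{N,v}+a_{K'L',\bar v}\phi_j$ plus the MBRW, all of which vary negligibly between $v$ and $z_B$ within the same $L'$-box thanks to \eqref{eq-continuity-a}). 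Once the reduction to $L'$-box maximizers is achieved, the problem becomes one of comparing two finite-dimensional Gaussian vectors $\{\tilde\varphi_{N,z_B}\}_B$ and $\{\xi_{N,z_B}\}_B$, whose inter-box covariances agree up to $o(1)$ (this is the macroscopic regime). A final Slepian comparison together with Lemma~\ref{lem-epsilon-domination} closes the argument.

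\textbf{Main obstacle.} The delicate step is the $L'$-box reduction in the presence of the $O(1)$ mesoscopic discrepancy, analogous to the analysis of event $E_4$ in the proof of Proposition~\ref{prop-equivfields}. One must verify that the perturbations $\xi_{N,v}-\xi_{N,z_B}$ (and the corresponding quantity for $\tilde\varphi$) do not create a new near-maximizer inside $B$ that beats $z_B$ by more than $o(1)$; this requires combining the conditional Gaussian estimates on block-constant (or near-block-constant) noise with the input from Lemma~\ref{lem-loc1} that only microscopic or macroscopic near-maxima of $\varphi$ occur, and carefully tracking the iterated limits $K'\gg L'\gg K\gg L$ so that each error term can be driven to zero in the correct order.
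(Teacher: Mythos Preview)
Your reduction step for $\xi$ does not work as stated. The field $\xi_{N,v}$ is \emph{not} of the form $\varphi_{N,v}+\text{(block-constant perturbation)}$: the bottom field $\xi^b_{N,v}$ varies non-trivially within each $L'$-box, and $\xi$ and $\varphi$ are not coupled in any natural way. Consequently, if $z_B$ denotes the $\varphi$-maximizer in the $L'$-box $B$, there is no reason why $\max_{v\in B}\xi_{N,v}$ should be within $o(1)$ of $\xi_{N,z_B}$. Even if you instead take separate box-maximizers for each field, the reduced objects $\{\tilde\varphi_{z_B}\}_B$ and $\{\xi_{z_B}\}_B$ are neither Gaussian vectors nor finite-dimensional in any useful sense (there are $(N/L')^d$ boxes), and adjacent $L'$-boxes can lie in the same $N/L$-box, so their inter-box covariances fall precisely in the mesoscopic regime where the discrepancy is $O(1)$, not $o(1)$. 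Your reduction has not removed the problem it was meant to solve, and the final ``Slepian plus Lemma~\ref{lem-epsilon-domination}'' step cannot be executed.

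The paper's proof takes a different and much shorter route that avoids any box-maximizer reduction. It perturbs \emph{both} fields, with asymmetric $\delta$-dependent parameters: setting $\sigma_*^2=4\log(1/\delta)+60\alpha$, it adds noise $\sigma_{\mathrm{lw}}=(0,\sqrt{\sigma_*^2+4\alpha})$ at the $N/L$ scale to $\varphi$ and noise $\sigma_{\mathrm{up}}=(\sigma_*,0)$ at the $L'$ scale to $\xi$. The point is that $\tilde\varphi_{N,L',L,\sigma_{\mathrm{lw}},\cdot}$ gains an extra $\sigma_*^2+4\alpha$ of covariance for every pair sharing an $N/L$-box, while $\tilde\xi_{N,L',L,\sigma_{\mathrm{up}},\cdot}$ gains only $\sigma_*^2$ for pairs sharing an $L'$-box; since $\sigma_*^2$ exceeds the bound $4\log(1/\delta)+40\alpha$ of Lemma~\ref{lem-xi-approx}(c), the mesoscopic $O(1)$ gap is swamped and one obtains a \emph{uniform} one-sided inequality $\Cov(\tilde\xi_u,\tilde\xi_v)\le\Cov(\tilde\varphi_u,\tilde\varphi_v)+o(1)$ over \emph{all} pairs, with matching variances. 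Lemma~\ref{lem-epsilon-domination} then applies directly, and Lemma~\ref{lem-nfmax} on each side removes the perturbations with shifts differing by exactly $2\alpha\sqrt{2d}$ (since $\|\sigma_{\mathrm{lw}}\|_2^2-\|\sigma_{\mathrm{up}}\|_2^2=4\alpha$). The reverse domination follows symmetrically by swapping the roles of the two perturbations.
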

\begin{proof}
By Proposition~\ref{prop-maxrtfluc}, 
it suffices to show that for all $\delta>0$
$$ \limsup_{(L,K,L',K') \Rightarrow \infty} 
\limsup_{N\to \infty} \limsup_{N\to \infty} 
d(\max_{v\in V_{N, \delta}^*}  \varphi_{N, v}- m_N,  
\max_{v\in V_{N, \delta}^* }\xi_{N, v} - m_N - 2\alpha \sqrt{2d} ) = 0\,.$$
Consider a fixed $\delta>0$. Let $\sigma_*^2 = 4\log (1/\delta) + 60\alpha$. 
Let $\sigma_{\mathrm{lw}} = (0, \sqrt{\sigma_*^2 + 4\alpha} )$ and 
$\sigma_{\mathrm{up}} = (\sigma^*, 0)$. Define 
$\{\tilde \varphi_{N, L', L, \sigma_{\mathrm{lw}}, v} : v\in V_N\}$ as in 
\eqref{eq-def-tilde-phi} with  $r_1 = L'$, $r_2 = L$ and 
$\sigma = \sigma_{\mathrm{lw}}$. Analogously, define $\{\tilde 
	\xi_{N, L',  L, \sigma_{\mathrm{up}}, v}: v\in V_N\}$. 
	By \eqref{eq-def-xi} and Lemma~\ref{lem-xi-approx}, one has
	for all $u, v \in V_{N, \delta}^*$,
\begin{align*}
|\var \tilde \varphi_{N, L', L, \sigma_{\mathrm{lw}}, v} - 
\var  \tilde \xi_{N, L', L, \sigma_{\mathrm{up}}, v}| 
&\leq \bar \epsilon_{N, K, L, K', L'}\,, \\
\E \tilde \xi_{N, L, \sigma_{\mathrm{up}}, v} \tilde 
\xi_{N, L, \sigma_{\mathrm{up}}, u} & \leq \E \tilde \varphi_{N, L, 
	\sigma_{\mathrm{lw}}, v}\tilde \varphi_{N, L, 
		\sigma_{\mathrm{lw}}, u} + \bar \epsilon_{N, K, L, K', L'}\,,
\end{align*}
where $\limsup_{(L,K,L',K') \Rightarrow \infty} 
\limsup_{N\to \infty} \bar \epsilon_{N, K, L, K', L'} = 0$. 
Since $\{\tilde \varphi_{N, L', L, \sigma_{\mathrm{lw}}, v} : 
v\in V^*_{N, \delta}\}$ satisfies Assumption (A.1) with $\alpha$ 
being replaced by $10\alpha + \sigma_*^2$, one may
apply Lemma~\ref{lem-epsilon-domination} and obtain that
$$\limsup_{(L,K,L',K') \Rightarrow \infty}\limsup_{N\to \infty} 
\tilde d (\max_{v\in V_{N, \delta}^*} \tilde \varphi_{N, L',  L, 
	\sigma_{\mathrm{lw}}, v} - m_N, 
	\max_{v\in V_{N, \delta}^*} \tilde \xi_{N, L',  
		L, \sigma_{\mathrm{up}}, v} - m_N) = 0\,.$$
By Lemma~\ref{lem-nfmax} (it is clear that the same statement holds for maximum over $V_{N, \delta}^*$), one gets
\begin{align*}
&\limsup_{(L,K,L',K') \Rightarrow \infty} \limsup_{N\to \infty}  
d(\max_{v\in V_{N, \delta}^*} \tilde 
\varphi_{N, L',  L, \sigma_{\mathrm{lw}}, v} - m_N - 
(\sigma_*^2 + 4\alpha) \sqrt{d/2} , \max_{v\in V_{N, \delta^*}} \varphi_{N, v} - m_N) = 0\,,\\
&\limsup_{(L,K,L',K') \Rightarrow \infty}  \limsup_{N\to \infty} 
d(\max_{v\in V_{N, \delta}^*} \tilde \xi_{N, L', L, \sigma_{\mathrm{up}}, v} 
- m_N - (\sigma_*^2) \sqrt{d/2} , 
\max_{v\in V_{N, \delta^*}} \xi_{N, v} - m_N) = 0\,.
\end{align*}
Altogether, this gives that 
$$ \limsup_{(L,K,L',K') \Rightarrow \infty} \limsup_{N\to \infty} 
\tilde d(\max_{v\in V_{N, \delta}^*} \varphi_{N, v}- m_N, 
\max_{v\in V_{N, \delta}^* }\xi_{N, v} - m_N - 2\alpha \sqrt{2d} )= 0\,.$$
The other direction
of stochastic domination follows in the same manner. 
Altogether, this completes the proof of the lemma.
\end{proof}

\subsection{Convergence in law for the centered maximum}\label{sec:limit-law}

In light of Lemma~\ref{lem-key-convergence}, in order to prove 
Theorem~\ref{thm-convergence} it remains to show the convergence 
in law for the centered maximum of $\{\xi_{N, v}: v\in V_N\}$. 
To this end, we will follow the proof of the convergence in law in 
the case of 
the 2D DGFF given in \cite{BDZ13}. Let the \textit{fine field} be defined
as 
$\xi^f_{N, v} = \xi_{N, v} - \xi^c_{N, v}$, and note that it implicitly
depends on  $K'L'$.
As in \cite{BDZ13}, a key step in the proof of convergence of
the centered maximum is the following sharp tail estimate on the right
tail of the distribution of
$\max_{v\in B} \xi_{N, v}^f$ for $B\in \mathcal B_{N/KL}$. The proof of
this estimate is postponed to the appendix.
\begin{prop}\label{prop-rttailrefined}
Let Assumptions (A.1), (A.2) and (A.3) hold. Then
there exist constants $C_\alpha, c_\alpha>0$ depending only on $\alpha$ and
constansts 
$c_\alpha\leq 
\beta_{K', L'}^*\leq C_\alpha$ such that
\begin{equation}\label{eq-righttail-finefield}
\lim_{z \rightarrow \infty} \limsup_{L'\to \infty}\limsup_{K' \rightarrow 
\infty} \limsup_{N \rightarrow \infty} |z^{-1} e^{\sqrt{2d}z} 
\P(\max_{v \in B_{N/KL, i} } \xi_{N,v}^f  \ge m_{N/KL} + z) - 
\beta_{K', L'}^* |=0.
\end{equation}
\end{prop}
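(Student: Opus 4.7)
My approach will adapt the modified second moment method of \cite{BDZ13,BZ10}, taking advantage of the fact that on the $N/KL$-box $B = B_{N/KL,i}$ the fine field $\xi^f$ has a much cleaner tree-structured decomposition than the DGFF. By construction $\xi^f_{N,v} = \xi^b_{N,v} + \xi_{N,v,\mathrm{MBRW}} + a_{K'L',\bar v}\phi_{j(v)}$, where $\xi^b$ restricts to an independent copy of $\varphi_{K'L',\cdot}$ on each $K'L'$-subbox of $B$, while $\xi_{N,v,\mathrm{MBRW}} + a_{K'L',\bar v}\phi_{j(v)}$ is \emph{constant} on each such subbox with an MBRW-type covariance across subboxes at scales from $K'L'$ to $N/KL$. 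Hence, writing $M^{(j)}$ for the maximum of $\xi^b$ on the $j$-th $K'L'$-subbox of $B$, we have
\begin{equation*}
\max_{v\in B} \xi^f_{N,v} \;=\; \max_{j}\bigl(M^{(j)} + \eta_j\bigr),
\end{equation*}
where the $M^{(j)}$ are i.i.d.\ copies of $\max_{v\in V_{K'L'}}\varphi_{K'L',v}$, independent of the (modified) BRW $\{\eta_j\}$ of depth $n-k-\ell-\ell'-k'$ indexed by the subbox indices.

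For the lower bound on the right tail, I will apply Paley--Zygmund to the count $N_z$ of subbox indices $j$ for which (i) the MBRW path from the root of $B$ to $j$ stays below an entropic barrier $\sqrt{2d}(n-k-\ell-m) + O(1)$ at every intermediate dyadic scale $m$ and (ii) $M^{(j)} + \eta_j \ge m_{N/KL} + z$. The first moment $\E N_z$ is computed via a ballot-type tilt; combined with the fact (following from Lemma~\ref{lem-righttail} and Proposition~\ref{prop-maxrtfluc}) that $M^{(j)} - m_{K'L'}$ has a right tail of order $z' e^{-\sqrt{2d}z'}$, this gives $\E N_z \asymp z e^{-\sqrt{2d}z}$. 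The second moment is controlled by partitioning pairs of subboxes according to the scale at which their MBRW ancestral paths diverge: the barrier truncation kills the pairs that share many intermediate levels, producing $\E N_z^2 \lesssim (\E N_z)^2$ uniformly in $N,K',L'$. For the upper bound, the first moment of the \emph{untruncated} count, together with a standard truncation argument (using Proposition~\ref{prop-maxrtfluc} at scale $K'L'$ to show that the maximizer obeys the barrier with high conditional probability), provides the matching estimate.

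The existence of the limit $\beta^*_{K',L'}$ (not merely boundedness) will be established by exploiting that the MBRW increments $b^N_{i,j,B}$ at scales $K'L' \le 2^j \le N/KL$ have a self-similar distribution, and the leaf laws $M^{(j)}$ are i.i.d.\ and independent of $N$. In this way $\max_{v\in B}\xi^f_{N,v} - m_{N/KL}$ is essentially (up to an error that vanishes in the iterated limits) the centered maximum of an auxiliary Gaussian BRW of growing depth with fixed increment law. Classical convergence results for the right-tail prefactor of centered Gaussian BRW maxima (see \cite{Bachmann,aidekon}) then yield $\lim_{N\to\infty} z^{-1} e^{\sqrt{2d}z}\P(\max_{v\in B}\xi^f_{N,v} \ge m_{N/KL}+z)$ for each fixed $K',L',z$, and the further $z\to\infty$ limit produces the constant $\beta^*_{K',L'}$. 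The uniform two-sided bounds $c_\alpha \le \beta^*_{K',L'} \le C_\alpha$ follow directly from Lemma~\ref{lem-righttail} and Proposition~\ref{prop-maxrtfluc} applied to the $N/KL$-box.

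The main obstacle, I expect, lies in the second moment computation: tuning the barrier truncation to be simultaneously loose enough to preserve the correct $z$ prefactor in $\E N_z$ and tight enough to suppress dangerous pairwise correlations at mesoscopic scales. The MBRW structure, however, reduces this to a direct ballot-problem computation along a tree, which is technically cleaner than the analogous step in \cite{BDZ13} where non-Markovian correlations of the DGFF fine field had to be handled; as the authors note, one also avoids having to analyse the distribution of the location of the maximizer, since the coarse field $\xi^c$ is constant on each $N/KL$-box.
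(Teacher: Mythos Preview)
Your decomposition $\max_{v\in B}\xi^f_{N,v}=\max_j(M^{(j)}+\eta_j)$ with i.i.d.\ leaves $M^{(j)}$ independent of the MBRW-type backbone $\eta_j$, and your use of a barrier-truncated count with a Paley--Zygmund/second moment argument, is exactly the scheme the paper follows (compare the events $E_{v,N}(z)\subset F_{v,N}(z)$, the counts $\Lambda_{\bar N,z}\le\Gamma_{\bar N,z}$, and \eqref{eq-second-moment-Lambda}). On that part there is nothing to correct.

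The gap is in your argument for the \emph{existence} of the limit $\beta^*_{K',L'}$. You propose to recognize $\max_{v\in B}\xi^f_{N,v}-m_{N/KL}$ as the centered maximum of ``an auxiliary Gaussian BRW of growing depth'' and invoke \cite{Bachmann,aidekon}. This does not work as written, for two reasons. First, the backbone $\{\eta_j\}$ is an MBRW, not a tree BRW: the increments at each scale are shared among overlapping boxes rather than being independent along distinct branches, so neither A\"id\'ekon's nor Bachmann's theorem applies to it directly (Slepian-type comparisons with BRW give one-sided domination, not equality in law). Second, even if you had convergence in distribution of the centered maximum for each fixed $K',L'$, that alone does not yield $\lim_{N}z^{-1}e^{\sqrt{2d}z}\P(\cdot\ge m_{N/KL}+z)$: you would still need the tail of the limiting law to be $\sim\beta^* z e^{-\sqrt{2d}z}$ and enough uniformity in $z$ to interchange limits, which amounts to a separate derivative-martingale tail estimate that is not provided by those references.

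The paper avoids this detour entirely. After reducing $\P(\max\xi^f\ge m_{\bar N}+z)$ to $\E\Lambda_{\bar N,z}$ via the second-moment step, it computes $\E\Lambda_{\bar N,z}$ directly: write $\P(E_{v,N}(z))$ as an integral against the sub-probability density $\nu_{v,\bar N}(y)$ of the barrier-constrained Brownian endpoint, evaluate $\nu_{v,\bar N}$ by the reflection principle plus Girsanov (giving $\nu_{v,\bar N}(y)\asymp 2^{-dn^*}e^{-\sqrt{2d}y}z(z-y)$ on the relevant window), and observe that since the leaf law $Y_{v,N}$ depends only on $(K',L')$ and not on $N$, the resulting integral converges as $N\to\infty$ to some $\Lambda^*_{K',L',z}$. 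A ratio computation in $z$ then gives the scaling $\Lambda^*_{K',L',z}\sim\beta^*_{K',L'}ze^{-\sqrt{2d}z}$. This explicit first-moment analysis is the missing ingredient in your sketch; once you have it, no appeal to external BRW convergence results is needed.
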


\begin{remark}\label{remark-analogue}
 Proposition~\ref{prop-rttailrefined} is analogous to 
 \cite[Proposition 4.1]{BDZ13}, but there are two important
 differences: 
 \begin{enumerate}
	 \item
In Proposition~\ref{prop-rttailrefined} the convergence is
to a constant $\beta_{K', L'}^*$ which depends on $K', L'$,
while in \cite[Proposition 4.1]{BDZ13} the convergence is to
an absolute constant $\alpha^*$. This is because the
fine field $\xi_{N, v}$ here 
implicitly depends on $K', L'$, and thus a priori one is
not able to eliminate the dependence on $(K', L')$ from the limit.
However, in the same spirit as in \cite{BDZ13},
the dependence on $(K', L')$ is not an issue for deducing a 
convergence in law  --- the crucial requirement is the independence of $N$. 
Eventually, 
we will deduce the 
convergence of $\beta^*_{K', L'}$  as $K', L'\to \infty$ in that 
order from the convergence in law of the centered maximum. 
\item
In  \cite[Proposition 4.1]{BDZ13}, one also controls
the limiting distribution of the location of the 
maximizer while in Proposition~\ref{prop-rttailrefined} this is 
not mentioned. This is because in the current situation and unlike the 
construction in \cite{BDZ13}, the
coarse field $\{\xi^c_{N, v}\}$ is constant over each box $B_{N/KL, i}$, 
and thus the location of the maximizer of the fine field in each of the
boxes $B_{N/KL, i}$ is irrelevant to the value of the maximum for 
$\{\xi_{N, v}\}$.
\end{enumerate}
\end{remark}

Next, we construct the
limiting law of the centered maximum of $\{\xi_{N, v}: v\in V_N\}$.
We partition $[0,1]^d$ into $R = (KL)^d $ disjoint boxes of equal sizes. 
Let  $\beta^*_{K', L'}$ be as defined in the statement of 
Proposition~\ref{prop-rttailrefined}. By that proposition,
there exists a function $\gamma : \mathbb R \mapsto \mathbb R$  
that grows to infinity arbitrarily slowly (in particular, we may
assume that $\gamma(x)\leq \log\log\log x$) such that 
$$\lim_{z'\to \infty}\limsup_{L'\to \infty} 
\limsup_{K'\to \infty} \limsup_{N\to \infty}
\sup_{z'\leq z\leq \gamma(K'L')} 
|z^{-1} e^{\sqrt{2d}z} \P(\max_{v \in B_{N/KL, i} } 
\xi_{N,v}^f  \ge  m_{N/KL} + z) - \beta_{K', L'}^*| =0.$$

Let $\{{\varrho}_{R, i}\}_{i=1}^{R}$ be independent Bernoulli random 
variables with 
$$\P(\varrho_{R, i} = 1)=\beta^*_{K', L'} 
\gamma(KL) e^{-\sqrt{2d}\gamma(KL)}\,.$$
In addition, consider independent random 
variables $\{Y_{R, i} \}_{i=1}^{R}$ such that
\begin{equation}\label{eq-distrY}
\P(Y_{R, i} \geqslant x )=
\tfrac{\gamma(KL) + x}{\gamma(KL)}e^{-\sqrt{2d}x}\qquad x \geqslant 0.
\end{equation}
Let $\{Z_{R, i}: 1\leq i\leq R\}$ be an independent Gaussian field 
with covariance matrix $\Sigma$ (recall that $\Sigma$ is 
of dimension $R\times R$). 
We then define 
$$G_{K, L, K', L'}^* = \max_{1\leq i\leq R, \varrho_{R, i} = 1} G_{R, i} 
\mbox { where } G_{R, i} = 
\varrho_{R, i} (Y_{R, i} + \gamma(KL) )+ Z_{R, i} - \sqrt{2d} \log (KL) $$
(here we use the convention that $\max \emptyset = 0$).
Let  $\bar \mu_{K,L, K', L'}$ be the distribution of $G_{K, L, K', L'}^*$. We note that  $\bar {\mu}_{K, L, K', L'}$  does not depend  on $N$. 
\begin{theorem}\label{thm-conv2}
Let Assumptions (A.0), (A.1), (A.2) and (A.3) hold. Then,
\begin{equation}\label{eq-conv2}
 \limsup_{(L,K,L',K') \Rightarrow \infty}
 \limsup_{N \rightarrow \infty } d(\mu_N,  \bar{\mu}_{K,L, K', L'})=0,
\end{equation}
where $\mu_N$ is the law of $\max_{v\in V_N} \xi_{N, v} - m_N $.
\end{theorem}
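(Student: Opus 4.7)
The plan is to exploit the independence structure built into $\xi_{N,v}$ in \eqref{eq-def-xi}. The coarse field $\xi^c_{N,v}$ is constant on each of the $R=(KL)^d$ macroscopic boxes $B_{N/KL,i}$, so setting $\xi^c_i := \xi^c_{N,v_{N/KL,i}}$ and $M_i := \max_{v\in B_{N/KL,i}}\xi^f_{N,v}$ (with $\xi^f := \xi - \xi^c$) gives
\[ \max_v \xi_{N,v} = \max_{1\leq i\leq R}(\xi^c_i + M_i). \]
Directly from the construction, $(\xi^c_i)_{i=1}^R$ is centered Gaussian with covariance $\Sigma$ and hence equidistributed with $(Z_{R,i})_{i=1}^R$; it is independent of $\xi^f$; and the $M_i$'s are i.i.d., since $\xi^b$ is independent across $K'L'$-boxes (which tile each $B_{N/KL,i}$), the MBRW component in \eqref{eq-xi-MBRW} uses independent coefficients per macroscopic box, and the $a_{K'L',\bar v}\phi_j$ correction is likewise piecewise constant and independent across $K'L'$-boxes.

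Next I restrict attention to the tail set $T := \{i : M_i - m_{N/KL} \geq \gamma(KL)\}$. For $i\notin T$, $\xi^c_i + M_i \leq \max_j \xi^c_j + m_{N/KL} + \gamma(KL)$; since $(\xi^c_i)$ is, up to reindexing, the field $\varphi_{KL,\cdot}$, Theorem~\ref{thm-tightness} yields that $\max_j \xi^c_j - m_{KL}$ is tight, so with probability tending to one,
\[ \max_{i\notin T}(\xi^c_i + M_i) - m_N \;\leq\; \gamma(KL) - \tfrac{3}{2\sqrt{2d}}\log\log(KL) + O(1), \]
which tends to $-\infty$ as $KL\to\infty$ because $\gamma(x)\leq \log\log\log x$. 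Meanwhile Lemma~\ref{lem-righttail}, transported from $\varphi_N$ to $\xi_N$ via Lemma~\ref{lem-key-convergence}, gives $\max_v\xi_{N,v}\geq m_N - O(1)$ in probability. Hence in the iterated limit the argmax lies in $T$ with probability tending to one, and on that event
\[ \max_v \xi_{N,v} - m_N = \max_{i\in T}\bigl(\xi^c_i + (M_i - m_{N/KL} - \gamma(KL)) + \gamma(KL)\bigr) - \sqrt{2d}\log(KL) + o(1). \]
The $z$-uniform strengthening of Proposition~\ref{prop-rttailrefined} stated just before the theorem (valid for $z\in[z_0,\gamma(K'L')]$) then yields, in the iterated limit, $\P(i\in T)\to \beta^*_{K',L'}\gamma(KL)e^{-\sqrt{2d}\gamma(KL)}=\P(\varrho_{R,i}=1)$, and for each fixed $y\geq 0$,
\[ \P\bigl(M_i - m_{N/KL} - \gamma(KL)\geq y \mid i\in T\bigr)\longrightarrow \frac{\gamma(KL)+y}{\gamma(KL)}e^{-\sqrt{2d}y}=\P(Y_{R,i}\geq y). \]
Combined with the exact identity in law $(\xi^c_i)\stackrel{d}{=}(Z_{R,i})$, independence of the coarse field from $(M_i)$, and the i.i.d.\ structure of the $M_i$'s, this gives joint convergence of $(\xi^c_i,\mathbf{1}_{i\in T},\mathbf{1}_{i\in T}(M_i - m_{N/KL} - \gamma(KL)))_{i=1}^R$ to $(Z_{R,i},\varrho_{R,i},\varrho_{R,i}Y_{R,i})_{i=1}^R$. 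An application of the continuous mapping theorem to the functional $(z,\rho,y)\mapsto \max_{i:\rho_i=1}(z_i + y_i + \gamma(KL)) - \sqrt{2d}\log(KL)$, with the convention $\max\emptyset := 0$, delivers $\mu_N\Rightarrow \bar\mu_{K,L,K',L'}$ and hence \eqref{eq-conv2}.

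The principal obstacle is bridging, in the second step, the pointwise right-tail asymptotic of Proposition~\ref{prop-rttailrefined} and the conditional distributional identification of $M_i - m_{N/KL} - \gamma(KL)$ given $i\in T$. What is required is uniformity of the tail estimate over a $z$-window that grows (slowly) with $K'L'$; this is exactly the content of the strengthened formulation recalled in the preamble to the theorem, and the growth constraint $\gamma(K'L')\gg \gamma(KL)$ in the iterated limit ensures that the relevant conditioning falls inside the window of uniform convergence. A secondary subtlety is that the max over the random index set $\{i:\rho_i = 1\}$ is a piecewise-continuous function of the joint variable; this is harmless because the limiting distributions are atomless (so discontinuities occur on null sets) and because $R\cdot\P(\varrho_{R,1}=1)\to\infty$ under the slow growth of $\gamma$, making $\P(T=\emptyset)\to 0$.
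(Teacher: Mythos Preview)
Your approach is essentially the paper's: decompose $\max_v\xi_{N,v}=\max_i(\xi^c_i+M_i)$, argue the argmax lies in the tail set $T$, and match $(\xi^c_i,\mathbf 1_{i\in T},M_i-m_{N/KL}-\gamma(KL))$ with $(Z_{R,i},\varrho_{R,i},Y_{R,i})$ using Proposition~\ref{prop-rttailrefined}. The paper carries out the last step by building an explicit coupling (with a quantified error $\epsilon^*(N,K,L,K',L')$ and upper truncation events $\mathcal E,\mathcal E'$), whereas you invoke weak convergence plus the continuous mapping theorem.

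The one point that needs tightening is the phrase ``joint convergence \ldots\ to $(Z_{R,i},\varrho_{R,i},\varrho_{R,i}Y_{R,i})$'' followed by CMT. For \emph{fixed} $K,L,K',L'$, Proposition~\ref{prop-rttailrefined} does not give $\P(i\in T)\to\P(\varrho_{R,i}=1)$ as $N\to\infty$; the ratio converges to $1$ only after the further limits in $K',L'$ and in the threshold. So you are not in the standard CMT setting of convergence to a fixed target---both sides move with the outer parameters, and what you actually need is that the L\'evy distance between the two $R$-tuples vanishes in the iterated limit. That statement is correct and is exactly what the paper's coupling encodes; your CMT sentence should be replaced either by such a coupling (which the tail asymptotic yields immediately, since it controls the distribution function of each $M_i$ uniformly on the relevant window) or by a two-step argument: first pass to the $N\to\infty$ limit of the law of $M_i-m_{N/KL}$ (its existence follows from the appendix, cf.\ \eqref{eq-Lambda-KL} and \eqref{eq-tail-is-first-moment}), then compare that limit with $(\varrho_{R,i},Y_{R,i})$ as $(L,K,L',K')\Rightarrow\infty$. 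With that adjustment your argument is complete and coincides with the paper's.
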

(Note that $\mu_N$ does depend on $KL,K'L'$.)

\begin{proof} [Proof of Theorem~\ref{thm-convergence}] Theorem~\ref{thm-convergence} follows from Lemma~\ref{lem-key-convergence} and Theorem~\ref{thm-conv2}. 
\end{proof}

Next, we give the proof of Theorem~\ref{thm-conv2}. Our proof is conceptually simpler than that of its analogue \cite[Theorem 2.4]{BDZ13}, since our coarse field is constant over a box of size $N/KL$ (and thus no consideration of the location for the maximizer in the fine field is needed).

\begin{proof}[Proof of Theorem~\ref{thm-conv2}]
Denote by $\tau = \arg\max_{v\in V_N} \xi_{N, v}$. 
Applying Theorem~\ref{thm-tightness} to the Gaussian fields 
$\{\xi_{N, v}: v\in V_N\}$ and $\{\xi_{N, v}^c: v\in V_N\}$
(where the maximum of $\{\xi_{N, v}^c: v\in V_N\}$ is 
equivalent to the maximum of a log-correlated Gaussian field in a 
$KL$-box), we deduce that 
\begin{equation}\label{eq-except-prob}
 \limsup_{(L,K,L',K') \Rightarrow \infty} 
 \limsup_{N \rightarrow \infty } \P(\xi_{N, \tau}^f \geq m_{N/KL} 
 + \gamma(KL)+1)=1\,.
\end{equation}
Therefore, in what follows, we assume w.l.o.g.  the occurrence of the event 
$$\{\xi_{N, \tau}^f \geq \sqrt{2d} \log \tfrac{N}{KL} - \tfrac{3}{2\sqrt{2d}} \log \log \tfrac{N}{KL} + \gamma(KL)+ 1\}\,.$$ 
Let $\mathcal E = \cup_{1\leq i\leq R}\{\max_{v\in B_{N/KL, i}}\xi_{N, v}^f \geq m_{N/KL} + KL +1\}$. 
A simple union bound over $i$ gives that 
\begin{equation}
  \label{eq-loststar}
  \limsup_{(L,K,L',K') \Rightarrow \infty} 
  \limsup_{N \rightarrow \infty } \P(\mathcal E) = 0\,.
\end{equation}
Thus in what follows we assume without loss that $\mathcal E$ does not occur. Analogously, we let $\mathcal E' = \cup_{1\leq i\leq R}\{Y_{R, i} \geq  KL +1 - \gamma (KL)\}$. We see from the  union bound that 
\begin{equation}
  \label{eq-losttwostar}
  \limsup_{(L,K,L',K') \Rightarrow \infty} \limsup_{N \rightarrow \infty } \P(\mathcal E') = 0\,.
\end{equation}
In what follows, we assume without loss that $\mathcal E'$ does not occur.

For convenience of notation, we denote by 
$$M^f_{N, i} = \max_{v\in B_{N/KL, i}} \xi_{N, v}^f  - ( m_{N/KL} +\gamma(KL)  )
\,.$$
By Proposition~\ref{prop-rttailrefined}, there exists $\epsilon^* = \epsilon^* (N, K, L, K', L')$ with  
$$\limsup_{(L,K,L',K') \Rightarrow \infty} 
\limsup_{N \rightarrow \infty } \epsilon^*(N, K, L, K', L') = 0\,,$$ 
such that for some $|\epsilon^\diamond| \leq \epsilon^*/4$
\begin{align*} 
\P(\epsilon^\diamond \leq M_{N, i}^f \leq KL - \gamma(KL) + 1) = \P(\varrho_{R, i} = 1, Y_{R, i}\leq KL - \gamma(KL) +1)\,,
\end{align*}
and that for all $-1\leq t \leq KL-\gamma(KL) + 1$
$$ \P(\varrho_{R, i=1}, Y_{R, i} \leq t - \epsilon^*/2) \leq \P(\epsilon^\diamond \leq M_{N, i}^f \leq t) \leq \P(\varrho_{R, i=1}, Y_{R, i} \leq t+\epsilon^*/2)\,.$$
Therefore,  there exists a coupling between $\{M^f_{N, i}: 1\leq i\leq R\}$ and $\{\varrho_i, Y_{R, i}: 1\leq i\leq R\}$ such that 
on the event $(\mathcal E \cup \mathcal E')^c$,
\begin{equation}\label{eq-coupling-fine-field}
\varrho_{R, i} =1, |Y_{R, i} - M^f_{N, i}| \leq \epsilon^* 
\mbox{ if } M^f_{N, i} \geq  \epsilon^*\,,\mbox{ and }  |Y_{R, i} - M^f_{N, i}| 
\leq \epsilon^*  \mbox{ if } \varrho_{R, i} = 1\,.
\end{equation}
In addition, it is trivial to couple such that $\xi_{N, v}^c = Z_{R, i}$ for all $v\in B_{N/KL, i}$ and $1\leq i\leq R$. Also, notice the following simple fact
$$ \limsup_{L\to \infty}\limsup_{K\rightarrow \infty} \limsup_{N \rightarrow \infty } (m_N -  m_{N/KL}- \sqrt{2d} \log (KL)) =0 \,.$$
Altogether, we conclude that  there exists a coupling such that outside an event
of probability tending to 0 as $N\to \infty$ and then $(L,K,L',K') 
\Rightarrow \infty$ (c.f. \eqref{eq-except-prob}, \eqref{eq-loststar}, 
\eqref{eq-losttwostar}) we have 
$$\max_{v\in V_N} (\xi_{N, v} -m_N) - G_{K, L, K', L'}^*\leq 2\epsilon^*\,.$$
Now, let $\tau' = \arg\max_{1\leq i\leq R} G_{R, i}$. Applying Theorem~\ref{thm-tightness} to the Gaussian field $\{Z_{R, i}\}$ and using the preceding inequality, we see that
\begin{equation}\label{eq-tau-prime}
\limsup_{(L,K,L',K') \Rightarrow \infty} \limsup_{N \rightarrow \infty } \P(\varrho_{R, \tau'} = 1) = 1\,.
\end{equation}
Combined with \eqref{eq-coupling-fine-field}, this yields that
 there exists a coupling such that except with probability tending to 
 0 as $N\to \infty$ and then $(L,K,L',K') \Rightarrow \infty$ we have 
$$|\max_{v\in V_N} ( \xi_{N, v} -m_N) - G_{K, L, K', L'}^*|\leq 2\epsilon^*\,.$$
thereby completing the proof of Theorem~\ref{thm-conv2}.
\end{proof}

\begin{proof}[Proof of Theorem \ref{explicit-limit-law}]
Recall that $G^*_{K,L, K', L'}$ is a random
variable with law $\bar \mu_{K, L, K', L'}$. 
We will construct
random variables $\mathcal Z_{K, L}$,
measurable with respect to 
${\cal F}^c:=\sigma(
\{Z_{R, i}\})$, so that
\begin{equation}
	\label{eq-gumbel1}
\limsup_{(L,K,L',K') \Rightarrow \infty}
\frac{\bar \mu_{K,L, K', L'}( (-\infty,x] )}{
	\E(\text{e}^{- \beta_{K', L'}^* \mathcal Z_{K,L} \text{e}^{-\sqrt{2 d}x}})} = \liminf_{(L,K,L',K') \Rightarrow \infty}
\frac{\bar \mu_{K,L, K', L'}( (-\infty,x] )}{
	\E(\text{e}^{- \beta_{K', L'}^* \mathcal Z_{K,L} \text{e}^{-\sqrt{2 d}x}})} = 1\,.
\end{equation}
for all $x$. To demonstrate \eqref{eq-gumbel1}, 
due to
\eqref{eq-tau-prime}, we may and will
assume without loss that $\varrho_{R, \tau'} = 1$. 
Define $S_{R, i} := \sqrt{2d}\log (KL) - Z_{R, i}$. 
Then,
for any real $x$,
\begin{equation}
	\label{eq-gumbel2}
	\P(G^*_{K,L, K', L'}\leq x)= 
	\E\left(\prod_{i=1}^{R}\left(1-
	\P(\varrho_{R, i}Y_{R, i}>S_{R,i} + x - \gamma(KL)\,|\,{\cal F}^c)
	\right)\right)\,.
\end{equation}
In addition, the union bound gives that 
$$\limsup_{KL\to \infty}\P(\mathcal D) = 1 \mbox{ where } \mathcal D = 
\{\min_{1\leq i\leq R} S_{R,i} \geq   2\gamma(KL)\}\,.$$
So in the sequel we assume that $\mathcal D$ occurs. 
By the
definition of $\varrho_{R, i}$ and $Y_{R, i}$, we get that
\begin{align*}
  \P&(\varrho_{R, i}Y_{R, i}>S_{R,i} + x - \gamma(KL)\,|\,{\cal F}^c) 
=
\beta^*_{K', L'}(S_{R,i} + x) e^{-\sqrt{2d} (S_{R,i}
 + x)} \to 0 \mbox{ as } KL\to \infty\,.
 \end{align*}
	Therefore, 
	\begin{align}\label{eq-prob-approx}
		\exp(-(1+\epsilon_{K,L})
	\beta_{K', L'}^* 
	S_{R, i} &
	\text{e}^{-\sqrt{2 d}(x + S_{R, i})})\,\leq \,
	\P(\varrho_{R, i}Y_{R, i}\leq S_{R, i}+ x - \gamma(KL)\,|\,{\cal F}^c) \nonumber\\
	& \leq \exp(-(1-\epsilon_{K,L})
	\beta_{K', L'}^* 
	S_{R, i}
	\text{e}^{-\sqrt{2 d}(x + S_{R, i})})\,,
\end{align}
for $\epsilon_{K,L} > 0$ with 
$$\limsup_{KL\to\infty}
\epsilon_{K,L}= 0.$$
	Define $\mathcal Z_{K,L}=\sum_{i=1}^{R}S_{R, i}e^{-
		\sqrt{2d} S_{R, i}}$ (this is the analogue of a
		derivative martingale, see 
		\eqref{eq-def-derivative-martingale}). 
		Substituting  \eqref{eq-prob-approx} into \eqref{eq-gumbel2}
		 completes the proof of \eqref{eq-gumbel1}. 
Now, combining \eqref{eq-gumbel1} and Theorem~\ref{thm-conv2}, we see that we necessarily have 
$$\limsup_{K'\to \infty} \limsup_{L' \to \infty} |\beta^*_{K', L'} - \beta^*| = 0$$
for a number $\beta^*$ that does not depend on $(K', L')$. Plugging the preceding inequality into \eqref{eq-gumbel1}, we deduce that
\begin{equation}
	\label{eq-gumbel}
\limsup_{(L,K,L',K') \Rightarrow \infty}
\frac{\bar \mu_{K,L, K', L'}( (-\infty,x] )}{
	\E(\text{e}^{- \beta^* \mathcal Z_{K,L} \text{e}^{-\sqrt{2 d}x}})} = \liminf_{(L,K,L',K') \Rightarrow \infty}
\frac{\bar \mu_{K,L, K', L'}( (-\infty,x] )}{
	\E(\text{e}^{- \beta^* \mathcal Z_{K,L} \text{e}^{-\sqrt{2 d}x}})} = 1\,.
\end{equation}
Combining \eqref{eq-gumbel} with Theorem~\ref{thm-conv2} again, we see that $\mathcal Z_{K, L}$ converges weakly to a random variable $\mathcal Z$ as $K\to \infty$ and then $L\to \infty$. Also note that $\mathcal Z_{K, L}$ depends only on the product $KL$. Therefore, this 
implies that $\mathcal Z_N$ converges weakly to a random variable 
$\mathcal Z$. From the tightness of the laws
$\bar \mu_{K, L, K', L'}$, it follows that
$\mathcal Z>0$ a.s. This completes the proof of 
Theorem~\ref{explicit-limit-law}.
\end{proof}

\begin{proof}[Proof of Remark \ref{rem-universal}]
  Consider two sequences $\{\varphi_{N,v}\}$ and 
  $\{\tilde \varphi_{N,v}\}$ that satisfy assumptions (A.0)--(A.3) 
  with the same functions $h(x,y)$ and $f(x)$ but possibly different
  functions $g(u,v), \tilde g(u,v)$ and different constants $\alpha^{(\delta)},
  \alpha^{(\delta),'}$ and $\alpha_0,\alpha_0'$. 
  Introduce the corresponding
  fields 
  $$\xi_{N,KL,K'L'}=\xi_{N,KL,K'L'}^c+ \xi_{N,KL,K'L'}^f\,,\quad
  \tilde \xi_{N,KL,K'L'}=\tilde \xi_{N,KL,K'L'}^c+ \tilde 
  \xi_{N,KL,K'L'}^f\,, 
  $$
  see Section \ref{sec:decomposition}.
  Set also
  $$\hat \xi_{N,KL,K'L'}=\tilde \xi_{N,KL,K'L'}^c+ \xi_{N,KL,K'L'}^f\,.$$
  Let $\nu_N, \tilde \nu_N$ denote the 
  laws of the centered maxima 
  $\max_{v\in V_N} \varphi_{N,v}-m_N$, 
  $\max_{v\in V_N} \tilde \varphi_{N,v}-\tilde m_N$,  
  and let $\mu_N,\tilde \mu_N,\hat \mu_N$ denote the laws of 
  the centered maxima
  of the $\xi_N,\tilde\xi_N,\hat\xi_N$ fields. 
  (Recall that the latter depend also
  on $KL, K'L'$ but we drop that fact from the notation.) 
  By  Lemma~\ref{lem-key-convergence}, we have
  \begin{equation}
	  \label{eq-rem1}
	  \limsup_{(L,K,L',K') \Rightarrow 
	  \infty} \limsup_{N\to\infty}
	  \left(d(\mu_N,\nu_N)+d(\tilde \mu_N,\tilde \nu_N)\right)
	  =
	  0\,.
\end{equation}

For $s\in \mathbb{R}$, 
let $\theta_s\mu$ denote the shift of a probability measure $\mu$ on 
$\mathbb{R}$, that
is $\theta_s\mu(A)=\mu(A+s)$ for any measurable set $A$.
Recall the construction of $\bar \mu_{K,L,K',L'}$, see
Theorem \ref{thm-conv2}, and construct similarly 
$\tilde{\mu}_{K,L,K',L'}$ and
$\hat \mu_{K,L,K',L'}$. 
Note that, by construction, there exists $s=s(KL)$, bounded uniformly in
$KL$, so that  $\theta_s \hat \mu_{K,L,K',L'}=\tilde \mu_{K,L,K',L'}$.
In particular, from Theorem \ref{thm-conv2} we get that
  \begin{equation}
	  \label{eq-rem2}
	  \limsup_{(L,K,L',K') \Rightarrow 
	  \infty} \limsup_{N\to\infty}
	 \left(d(\mu_N,\bar \mu_{K,L,K',L'})+d(\tilde \mu_N,\theta_s\hat \mu_{K,L,K',L'})\right)=0\,.
       \end{equation}
       From \eqref{eq-rem1} and \eqref{eq-rem2}, one can find a sequence
       $L(N),K(N),K'(N),L'(N)$ along which the convergence still holds
       (as $N\to\infty$). Let $\{\eta_{v,N}\}$ and $\{\hat\eta_{v,N}\}$
       denote the fields $\{\xi_{v,N}\}$ and $\{\hat \xi_{v,N}\}$ 
       with this choice of
       parameters, and let $\bar\mu_N$ and $\hat\mu_N$ 
       denote the corresponding laws of
       the maximum. Let $\mu_\infty$, $\tilde \mu_\infty$ denote
       the limits of $\mu_N$ and $\tilde\mu_N$, which exist by theorem
       \ref{thm-convergence}. From the above considerations we have 
       that $\bar\mu_N\to \mu_\infty$ and $\theta_{s(N)}\hat\mu_N\to \tilde
       \mu_{\infty}$. On the other hand,
       the fields $\eta_{N,\cdot}$ and $\hat \eta_{N,\cdot}$ both 
       satisfy assumptions (A.0)-(A.3) with the same functions $f,g,h$ and thus,
       interleaving between then one deduces that the laws of 
       their centered maxima converge to the same limit,
       denoted $\Theta_\infty$. It follows that necessarily, $s(N)$ 
       converges and $\mu_\infty=\theta_s \tilde\mu_\infty=\Theta_\infty$. 
       Using the characterization in Theorem
       \ref{explicit-limit-law}, this yields the claim in the remark.
\end{proof}
\section{An example: the circular logarithmic REM}
\label{sec:example}
In the important paper \cite{BF}, the authors introduce a one dimensional
logarithmically correlated Gaussian field, which they call the 
\textit{circular logarithmic REM} (CLREM). Fyodorov and Bouchaud consider
the CLREM  as a  prototype
for Gaussian fields exhibiting Carpentier-LeDoussal 
freezing. (We do not discuss in this paper the notion
of freezing, referring instead to \cite{BF} and to \cite{SZ15}.)
Explicitly,  fix an integer $N$, set $\theta_k=2\pi k/N$, and introduce the
matrix
$$R_{k,\ell}=-\frac12 \log \left(4\sin^2\left(\frac{\theta_k-\theta_\ell}{2}
\right)\right){\bf 1}_{k\neq \ell} +(\log N+W){\bf 1}_{k=\ell}\,,$$
where $W$ is a constant independent on $N$. It is not hard to verify
(and this is done explicitly in \cite{BF}) that one can choose $W$ so that
the matrix $R$ is positive definite for all $N$; the resulting Gaussian field 
$\varphi_{N,v}$ with correlation matrix $R$ is the CLERM. One
may think of the CLREM as indexed by $V_N$ in dimension $d=1$, 
or (as the name indicates) by
an equally spaced collection of $N$ points on the unit circle in the complex plane.

Let $M_N=\max_{v\in V_N} \varphi_{N,v}$.
The following is a corollary of Theorems \ref{thm-tightness} and \ref{explicit-limit-law}.
\begin{cor}
	\label{cor-CLREM}
	$\E M_N=\sqrt{2}\log N-(3/2\sqrt{2})\log\log N+O(1)$ and
	there exist a constant $\beta^*$ and 
	a random variable $\mathcal{Z}$ so that 
	\begin{equation}
		\label{eq-paris5}
		\lim_{N\to\infty} \P(M_N-\E M_N\leq x)=\E(e^{-\beta^* 
			\mathcal{Z}e^{-\sqrt{2}x}})\,.
		\end{equation}
\end{cor}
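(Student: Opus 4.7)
The plan is to verify that the CLREM falls within the framework of this paper by checking Assumptions (A.0)--(A.3) directly from the explicit formula for $R_{k,\ell}$, and then invoking Theorems \ref{thm-tightness} and \ref{explicit-limit-law} (specialized to $d=1$, so that $m_N=\sqrt{2}\log N-(3/2\sqrt{2})\log\log N$ and $\sqrt{2d}=\sqrt{2}$). The key elementary fact to be used repeatedly is the two-sided estimate
\[
\tfrac{2}{\pi}\,|t| \le |\sin(\pi t)| \le \pi |t|, \qquad t\in[-\tfrac12,\tfrac12],
\]
together with the periodicity $\sin^2(\pi t)=\sin^2(\pi(1-t))$, which implies that for $k\ne\ell$,
\[
\Cov(\varphi_{N,k},\varphi_{N,\ell})=\log N-\log|k-\ell|_N+O(1),
\]
where $|k-\ell|_N:=\min(|k-\ell|,N-|k-\ell|)$ denotes the circular distance and $O(1)$ is uniform in $N,k,\ell$.

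Using this expansion, Assumption (A.0) is immediate: the variance is the constant $\log N+W$, and the increment bound follows from the inequality $|k-\ell|_N\le |k-\ell|$, so that $\E(\varphi_{N,k}-\varphi_{N,\ell})^2=2\log|k-\ell|_N+O(1)\le 2\log_+|k-\ell|+O(1)$. For (A.1) on $V_N^\delta$, we must compare $\log|k-\ell|$ with $\log|k-\ell|_N$; the two differ only when $|k-\ell|>N/2$, and since both endpoints lie in $[\delta N,(1-\delta)N]$ one has $|k-\ell|_N\ge 2\delta N$ and $|k-\ell|\le(1-2\delta)N$, yielding the bound $|\log(|k-\ell|/|k-\ell|_N)|\le\log((1-2\delta)/(2\delta))$; thus (A.1) holds with $\alpha^{(\delta)}=O(\log(1/\delta))$. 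Assumption (A.2) holds with $f(x)\equiv 0$ and $g(u,v)=-\log|u-v|-\log(2\pi)$ for $u\ne v$, $g(v,v)=W$: indeed, for $u,v\in[0,L]$ with $u\ne v$, one has $\pi(u-v)/N\to 0$, so $-\tfrac12\log(4\sin^2(\pi(u-v)/N))=\log N-\log|u-v|-\log(2\pi)+o(1)$ uniformly. Assumption (A.3) holds with the continuous function $h(x,y)=-\log(2|\sin(\pi(x-y))|)$, directly from the covariance formula since the argument is fixed for $x,y\in V^\delta$ with $|x-y|\ge1/L$.

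Having verified (A.0)--(A.3), Theorem \ref{thm-tightness} gives $\E M_N=m_N+O(1)$ as well as tightness of $\{M_N-\E M_N\}$, and Theorem \ref{explicit-limit-law} gives that $M_N-m_N$ converges in distribution to a random variable with distribution function $x\mapsto\E\exp(-\beta^*\mathcal Z e^{-\sqrt 2 x})$. To pass from centering by $m_N$ to centering by $\E M_N$, I would use Proposition \ref{prop-maxrtfluc} and Lemma \ref{lem-GFlefttail} to establish uniform integrability of $\{M_N-m_N\}$: the right tail bound $\P(M_N\ge m_N+z)\lesssim z e^{-\sqrt 2 z}$ is integrable in $z$, and the left tail $\P(M_N\le m_N-\lambda)\lesssim e^{-c\lambda}$ from Lemma \ref{lem-GFlefttail} likewise. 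Uniform integrability combined with the weak limit gives $\E M_N-m_N\to c_\infty$ for some constant $c_\infty\in\mathbb R$, whence $M_N-\E M_N$ converges weakly to the corresponding shifted limit; absorbing the deterministic shift $c_\infty$ into the constant yields the stated form $\E\exp(-\beta^*\mathcal Z e^{-\sqrt 2 x})$ with a redefined $\beta^*$, completing the proof.

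The routine parts are the verifications above; there is no deep new obstacle, since the whole point of the general framework developed in this paper is that such model-specific statements reduce to checking covariance asymptotics at the three scales (microscopic, mesoscopic, macroscopic), all of which are transparent from the closed-form covariance of the CLREM. The only mildly subtle point is the deduction of convergence of $M_N-\E M_N$ from convergence of $M_N-m_N$, which requires uniform integrability rather than just weak convergence; this is handled as above via the two universal tail estimates of Section \ref{sec:tightness}.
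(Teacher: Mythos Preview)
Your proposal is correct and follows exactly the paper's approach: verify (A.0)--(A.3) from the explicit covariance and invoke Theorems \ref{thm-tightness} and \ref{explicit-limit-law}. Your expansions for $g$ and $h$ are in fact more carefully stated than the paper's (which contain typos).

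One small remark on the last step: Lemma \ref{lem-GFlefttail} only gives the left-tail bound for $\lambda\le(\log n)^{2/3}$, so your uniform-integrability argument for $(M_N-m_N)_-$ is not quite complete as written. A cleaner route, which is what the paper implicitly uses, avoids uniform integrability altogether: since Theorem \ref{thm-convergence} already gives convergence in law of $M_N-\E M_N$ and Theorem \ref{explicit-limit-law} gives convergence in law of $M_N-m_N$, and the two sequences differ by the bounded deterministic sequence $\E M_N-m_N$, non-degeneracy of the limit forces $\E M_N-m_N\to c_\infty$; then absorb $e^{-\sqrt2 c_\infty}$ into $\beta^*$ as you do.
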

\begin{proof}
	Assumptions (A.0) and (A.1) are immediate to check. 
	An explicit computation reveals that
	Assumption (A.2) holds with $f(x)=0$ and
	$$g(u,v)=\left\{\begin{array}{ll}
		-W, & u=v\\
		\log(4\pi)+|u-v|, & u\neq v
	\end{array}
	\right . .$$
	Finally, it is clear that Assumption (A.3) holds with $h(x,y)=
	\log(4\sin^2(2\pi |x-y|))$. Thus, Theorems
	\ref{thm-convergence} and \ref{explicit-limit-law}
	apply and yields \eqref{eq-paris5}.
\end{proof}
	
\begin{remark}
Remarkably, in \cite{BF} the authors compute explicitly, albeit
nonrigorously, the law of the maximum of the CLREM, up to a deterministic shift
that they do not compute. It was observed in \cite{SZ15} that the law 
computed in \cite{BF} is in fact 
the law of a convolutions of two Gumbel random variables. In the 
notation of Corollary \ref{cor-CLREM}, this means that
one expects that
$2^{-1/2}\log (\beta^* \mathcal{Z})$ is Gumbel distributed.
We do not have a rigorous proof for this claim.
\end{remark}

\medskip
\noindent
{\bf Acknowledgement} 
Remark \ref{rem-universal} answers a question posed to us by
Vincent Vargas. We thank him for the question and for his insights.

\smallskip

\small

\appendix

\section{Proof of Proposition~\ref{prop-rttailrefined}}

Our proof of Proposition~\ref{prop-rttailrefined} is highly similar to the proof in \cite[Proposition 4.1]{BDZ13}, but simpler in a number of places. We will sketch the outline of the arguments, and refer to \cite{BDZ13} extensively (it is helpful to recall Remark~\ref{remark-analogue}). To start, we note that by Lemmas~\ref{lem-righttail} and \ref{lem-slepian}, there exists $c_\alpha>0$ depending only on $\alpha$ such that
\begin{equation}\label{eq-lower-tail-fine-field}
\P(\max_{v\in B_{N/KL, i}} \xi_{N, v}^f \geq m_{N/KL} + z) \geq c_\alpha z e^{-\sqrt{2d} z} \mbox{ for all } 1\leq z\leq  \sqrt{\log N/KL}, 1\leq i\leq (KL)^d\,. 
\end{equation} 
In addition, adapting the proof of \eqref{eq-maxrtflucrttail}, we deduce that there exists $C_\alpha>0$ depending only on $\alpha$ such that
\begin{equation}\label{eq-upper-tail-fine-field}
\P(\max_{v\in B_{N/KL, i}} \xi_{N, v}^f \geq m_{N/KL} + z) \leq C_\alpha z e^{-\sqrt{2d} z} \mbox{ for all } z\geq 1, 1\leq i\leq (KL)^d\,. 
\end{equation} 

Recall the definition of $\{\xi_{N, v}\}$ as in \eqref{eq-def-xi}. In what follows we consider a fixed $i$ and a box $B_{N/KL, i}$. We note that the law of the fine field $\{\xi_{N, v}^f: v\in B_{N/KL, i}\}$ does not depend on $K, L, i$, and hence $\beta^*_{K', L'}$ does not depend on $K, L, i$.  Write $\bar N = N/KL = 2^{\bar n}$ and $\bar L = K'L' = 2^{\bar \ell}$. 
For convenience of notation, we will refer to the box $B_{N/KL, i}$ as $V_{\bar N}$ and let $\Xi_{\bar N}$ be  the collection of all left bottom  corners of $\bar L$-boxes of form $B_{\bar L, j}$ in $B_{N/KL, i}$.
In addition, write $n^* = \frac{\var X_{v, N}}{\log 2}=   \bar n -\bar \ell $, where we denote $X_{v, N} =  \xi_{N, v, \mathrm{MBRW}} $.

For convenience, we now view each $X_{v,N}$ as
the value at time $n^*$ of a Brownian motion with variance rate $\log 2$.
More precisely, we assign to
each Gaussian variable $b_{ j, B}^N$ in
\eqref{eq-xi-MBRW}
an independent Brownian motion, with variance rate $\log 2$,
that runs for $2^{-2j}$ time units and ends at the value $b_{j, B}^N$.
We now define a Brownian motion $\{X_{v,N}(t) : 0\leq t\leq n^*\}$  by concatenating each of the previous Brownian motions associated with $v\in \Xi_{\bar N}$, with earlier times corresponding to larger boxes. From our construction, we see that $X_{v,N}(n^*) = X_{v,N}$. We partition $V_{\bar N}$ into disjoint  $\bar L$-boxes, for which we denote $\mathcal B_{\bar L}$. Further, denote by $B_v$ the $\bar L$-box in $\mathcal B_{\bar L}$ that contains $v$. 
Define
\begin{equation}\label{eq-big-definition}
\begin{split}
E_{v, N}(z) &= \{X_{v,N}(t) \leq z + \frac{ m_{\bar N}}{\bar n}t \mbox{ for all } 0\leq t\leq n^*, \mbox{ and } \max_{u\in B_v} \xi_{u,N}^f \geq m_{\bar N} + z\}\,,\\
F_{v, N}(z) &= \{X_{v,N}(t) \leq z + \frac{m_{\bar N}}{\bar n}t  + 10 (\log (t \wedge (n^*-t)))_+ + z^{1/20}\\
&\qquad\mbox{ for all } 0\leq t\leq n^*, \mbox{ and } \max_{u\in B_v} \xi_{u,N}^f\geq  m_{\bar N} + z\}\,,\\
G_{N}(z) &= \bigcup_{v\in \Xi_{\bar N}}
\bigcup_{0\leq t\leq n^*}\{X_{v,N}(t) >
z+ \frac{ m_{\bar N}}{\bar n}t  + 10 (\log (t \wedge (n^*-t)))_+ + z^{1/20}\}\,.
\end{split}
\end{equation}
Also define
$$\Lambda_{\bar N, z} = \sum_{v\in \Xi_{\bar N}} \one_{E_{v, N}(z) }\,, \mbox{ and }   \Gamma_{\bar N, z} = \sum_{v\in \Xi_{\bar N}}
\one_{F_{v, N}(z) }\,.
$$
In words, the random variable $\Lambda_{N,z}$ counts the number
of boxes in $ {\mathcal B}_{\bar L}$ whose ``backbone'' path $X_{v,N}(\cdot)$
stays below a linear path connecting $z$ to roughly $ m_{\bar N} + z$, so that
one of its ``neighbors'' achieves a terminal value that is at least
$m_{\bar N}+z$; the random variable $\Gamma_{N,z}$ similarly counts boxes in $ {\mathcal B}_{\bar L}$
whose backbone is constrained to stay below a slightly
``upward bent'' curve.
Clearly, $E_{v, N}(z) \subseteq F_{v, N}(z)$ always holds,
as does $\Lambda_{\bar N,z} \le \Gamma_{\bar N,z}$.

By \eqref{eq-def-xi}, for each $v\in \Xi_{\bar N}$ we can write that 
\begin{equation}\label{eq-Y-tail}
\max_{u\in B_v} \xi_{N, v}^f= X_{v, N} + Y_{v, N}\,, 
\end{equation}
where $\{Y_{v, N}\}$ are i.i.d.\ random variables with the same law as $\max_{u\in V_{\bar L}} \varphi_{\bar L, u} + a_{K, L, K', L', u} \phi$ where $\phi$ is a standard Gaussian variable. Crucially, the law of $Y_{v, N}$ does not depend on $N$. In addition, by Proposition~\ref{prop-maxrtfluc}  and Lemma~\ref{lem-righttail}, there exist $C_\alpha$ depending only on $\alpha$ such that
\begin{equation}\label{eq-tail-Y}
\P(Y_{v, N} \geq m_{\bar L} + \lambda) \leq C_\alpha \lambda e^{-\sqrt{2d} \lambda}  e^{-C_\alpha^{-1} \lambda^2/\bar \ell}\mbox{ for all } \lambda \geq 1\,.
\end{equation}

When estimating the ratio $\frac{\Lambda_{\bar N, z}}{\Gamma_{\bar N, z}}$, it is clear that $\frac{\Lambda_{\bar N, z}}{\Gamma_{\bar N, z}} = \frac{\P(E_{v, N}(z))}{\P(F_{v, N}(z))}$ for any fixed $v\in \Xi_{\bar N}$, where the latter concerns only the associated Brownian motion to $X_{v, N}$ and the random variable $Y_{v, N}$. As such, the arguments in \cite[Lemma 4.10]{BDZ13} carry out with merely notation change and give that
\begin{equation}\label{eq-Lambda-Gamma}
\lim_{z\to \infty}\limsup_{\bar L\to \infty} \limsup_{N\to \infty} \frac{\Lambda_{\bar N, z}}{\Gamma_{\bar N, z}} = 1\,.
\end{equation}

Analogous to the proof of \cite[Equation (100)]{BDZ13}, we can compare the field $\{X_{v, N}\}$ to a BRW and apply \cite[Lemma 3.7]{BDZ13} to obtain that
\begin{equation}\label{eq-G}
\P(G_{N}( z)) \leq C_\alpha e^{-\sqrt{2d} z}\,.
\end{equation}
Note that the dimension does not play a significant role in these estimates, as \cite[Lemma 3.7]{BDZ13} follows from a union bound calculation. The dimension changes the volume of the box, but the probability $$\P(X_{v,N}(t) >
z+ \frac{ m_{\bar N}}{\bar n}t  + 10 (\log (t \wedge (n^*-t)))_+ + z^{1/20})$$ scales in the dimension (recall that $m_N$ depends on $d$) which exactly cancels the growth of the volume in $d$. 

The next desired ingredient is the second moment computation for $\Lambda_{\bar N, z}$. Note that (i) our field $\{X_{v, N}: v\in \Xi_{\bar N}\}$ is simply an MBRW (so $\{X_{v, N}\}$ is nicer than its analog in \cite{BDZ13}, which is a sum of an MBRW and a field with uniformly bounded variance); (ii) our  $\{Y_{v, N}\}$ are i.i.d.\ random variable with desired tail bounds as in \eqref{eq-Y-tail} (so also nicer than its analog in \cite{BDZ13}, which has weak correlation for two neighboring local boxes). Therefore, the second moment computation in \cite[Lemma 4.11]{BDZ13} carries out with minimal notation change and gives 
\begin{equation}\label{eq-second-moment-Lambda}
\lim_{z\to \infty} \limsup_{\bar L\to \infty} \limsup_{N \to \infty} \frac{\E (\Lambda_{\bar N, z})^2}{\E \Lambda_{\bar N, z}} = 1\,.
\end{equation}
Note that in \cite[Equation (90)]{BDZ13}, there is no analog of $\limsup_{\bar L\to \infty}$ as in the preceding inequality. That's because we have assumed in \cite{BDZ13} that $L\geq 2^{2^{z^4}}$. Our statement as in \eqref{eq-second-moment-Lambda} is  weaker as it does not give a quantitative dependance on how $\bar L$ should grow in $z$. But this detailed quantitative dependence is not needed for the proof of convergence in law. 

Combining \eqref{eq-lower-tail-fine-field}, \eqref{eq-Lambda-Gamma}, \eqref{eq-G} and \eqref{eq-second-moment-Lambda}, we deduce that
\begin{equation}\label{eq-tail-is-first-moment}
\lim_{z\to \infty} \limsup_{\bar L\to \infty} \limsup_{N\to \infty} \Big| \frac{\P(\max_{v\in V_{\bar N} }\xi_{N, v}^f \geq m_{\bar N} +z)}{\E \Lambda_{\bar N, z}} - 1\Big| = 0\,.
\end{equation}
Therefore, it remains to estimate $\E \Lambda_{\bar N, z}$. To this end, we will follow \cite[Section 4.3]{BDZ13}. We first note that by \eqref{eq-lower-tail-fine-field} and \eqref{eq-tail-is-first-moment}, we have
\begin{equation}\label{eq-Lambda-lower}
\lim_{z\to \infty} \limsup_{\bar L\to \infty} \limsup_{N\to \infty} \frac{\E \Lambda_{\bar N, z}} {z e^{-\sqrt{2d}z}} \geq c_\alpha \,,
\end{equation}
where $c_\alpha>0$ is a constant depending on $\alpha$.

The main goal is to derive the asymototics for $\E \Lambda_{\bar N, z}$.
For $v\in \Xi_{\bar N}$, let $\nu_{v, \bar N}(\cdot)$ be the
density function (of a subprobability measure on $\mathbb R$)
such that, for all $I\subseteq \mathbb{R}$,
$$\int_I\nu_{v, \bar N}(y) dy = \P(X_{v,N}(t) \leq z + \frac{m_{\bar N}}{\bar n}t \mbox{ for all } 0\leq t\leq n^*; X_{v,N}(n^*) - (\bar n-\bar \ell) m_{\bar N}/\bar n \in I)\,.$$
Clearly, by \eqref{eq-Y-tail},
$$\P(E_{v, N}(z)) = \int_{-\infty}^z \nu_{v, \bar N}(y) 
\P(Y_{v, N}\geq \bar \ell m_{\bar N}/\bar n + z - y) dy\,.$$
For a given interval $J$, define
\begin{equation}\label{eq-def-lambda}
\lambda_{v, N, z , J} = \int_{J} \nu_{v, \bar N}(y) 
\P(Y_{v, N}\geq \bar \ell m_{\bar N}/\bar n + z - y) dy\,.
\end{equation}

Set $J_{\bar \ell} = [-\bar \ell, -\bar \ell^{2/5}]$. For convenience of notation, we denote by $A \lesssim B$ that there exists a constant $C_{\alpha}>0$ that depends only on $\alpha$ such that $A\leq C_\alpha B$ for two functions/sequences $A$ and $B$.
As in \cite[Lemma 4.13]{BDZ13}, we claim that 
for any
any sequences $x_{v,N}$ such that 
$|x_{v,N}| \lesssim \bar \ell^{1/5}$,
\begin{equation}\label{eq-limit-J-ell}
\lim_{z\to \infty} \liminf_{\bar \ell \to \infty} \liminf_{N\to \infty} \frac{\sum_{v\in \Xi_N } \lambda_{v, N, z, x_{v, N} +  J_{\bar \ell}}}{\E \Lambda_{N, z}} = 1\,.
\end{equation}
Note that, by containment, the above ratio is always at most $1$.  
We prove \eqref{eq-limit-J-ell} for the case when $x_{v,N} = 0$; the general case follows in the same manner.
Application of the reflection principle (c.f. \cite[Equation (28)]{BDZ13}) to the
Brownian motion with drift,
$\bar X_{v,N}(\cdot)=X_{v,N}(\cdot)-m_{\bar N}t/\bar n$, together with the change of measure
that removes the drift $m_{\bar N} t/\bar n$,  implies that
$$\nu_{v,\bar N}(y)\lesssim e^{-\sqrt{2d} y}2^{-dn^*} z  |y| \,,$$
for $y\le - \bar \ell$, over the given range $z\in (0, \bar \ell)$ (which implies 
$z - y\asymp
|y|$).
Together with \eqref{eq-tail-Y} and independence among $Y_{v, N}$ for $v\in \Xi_{\bar N}$, this implies
the  crude bound
$$\int_{-\infty}^{-\ell}   \nu_{v, \bar N}(y) 
\P(Y_{v, N}\geq \bar \ell m_{\bar N}/\bar n + z - y) dy \lesssim 2^{-d n^*} \mathrm{e}^{-C_\alpha^{-1} \bar \ell }$$
for a constant $C_\alpha>0$ depending on $\alpha$. Similarly,
for $y\leq z$ (and therefore, for $z-y\geq 0$),
application of the reflection principle and
\eqref{eq-tail-Y} again implies that
$$\int_{-\bar \ell^{2/5}}^{z} \nu_{v, \bar N}(y) 
\P(Y_{v, N}\geq \bar \ell m_{\bar N}/\bar n + z - y) dy \lesssim 2^{-d n^*} \bar \ell^{-3/10} z \mathrm{e}^{-\sqrt{2d} z}\,.$$
Together with
\eqref{eq-Lambda-lower},
this completes the verification of \eqref{eq-limit-J-ell}.

Next, we claim that
there exists $\Lambda^*_{K', L', z}>0$ that does not depend on $N$
such that, 
\begin{equation}
	\label{eq-Lambda-KL}
	\lim_{z\to \infty}\limsup_{\bar L\to \infty}\limsup_{N\to \infty} \frac{\E \Lambda_{N, z}}{\Lambda^*_{K', L', z} }= \lim_{z\to \infty}\liminf_{\bar L\to \infty}\liminf_{N\to \infty} \frac{\E \Lambda_{N, z}}{\Lambda^*_{ K', L',  z}} = 1\,.
\end{equation}

By  the
reflection principle and change of measure, we get that for all $y\in [-\bar \ell, z]$ (see the derivation of \cite[Equation (107)]{BDZ13})
\begin{equation}\label{eq-mu-N-v}
  \nu_{v, \bar N}(y) =  2^{-d n^*} \mathrm{e}^{-\sqrt{2d} y} \frac{z(z-y)}{\sqrt{2\pi \log 2}} (1+ O(\bar \ell^3/\bar n)) \,.
\end{equation}
Therefore, 
\begin{align*}
\sum_{v\in \Xi_{\bar N}} \lambda_{v, N, z, J_{\bar \ell}} &= (\frac{\bar N}{\bar L})^d \int_{J_{\bar \ell}}\nu_{v_0, \bar N}(y + O(\bar  \ell/\sqrt{\bar n})) \P(Y_{v_0, N} \geq \sqrt{2d} \log 2 \cdot \bar \ell  + z - y) dy\\
&= 
(1+ O(\bar \ell^3/\sqrt{\bar n}))
\int_{J_{\bar \ell}}  \frac{z(z-y)}{\sqrt{2\pi \log 2}  \mathrm{e}^{\sqrt{2d }y} }  \P(Y_{v_0, N} \geq \sqrt{2d} \log 2 \cdot \bar \ell  + z - y)dy\,,
\end{align*}
where $v_0$ is any fixed vertex in $\Xi_{\bar N}$ and in the last step we have used the fact that $n^* =   \bar n -\bar \ell$. Recall that the law of $Y_{v_0, N}$ is the same as $\max_{u\in V_{\bar L}} \varphi_{\bar L, u} + a_{K', L', u} \phi$, which does not depend on $N$. Combined with \eqref{eq-limit-J-ell}, this completes the proof of \eqref{eq-Lambda-KL}.

Finally, we analyze how $\E \Lambda_{N, z}$ scales with $z$. 
To this end, consider $z_1 < z_2$.  For $v\in \Xi_N$ and $j=1,2$, recall  that
$$\lambda_{v, N,  z_j, z_j+J_{\bar \ell}} = \int_{J_{\bar\ell} + z_j}\nu_{v, \bar N}(y) \P(Y_{v,N}\geq \ell m_{\bar N}/\bar n + z_i - y) dy\,.$$
By \eqref{eq-mu-N-v}, for any $y\in J_{\bar \ell}$ and $z_1, z_2 \ll \log \bar \ell$,
\begin{align*}
\frac{\nu_ {v, \bar N}(y + z_1) \P(Y_{v,N}\geq \bar \ell m_{\bar N}/\bar n - y)}{\nu_{v, \bar N}(y+z_2) \P( Y_{v,N}\geq \bar \ell m_{\bar N}/\bar n - y)} &= \frac{\nu_{v, \bar N}(y+z_1)}{\nu_{v, \bar N}(y+z_2)} = 
(1+ O(\bar \ell^3/\bar n))
\frac{z_1 (z_1 - y)}{z_2 (z_2 - y)} \mathrm{e}^{-\sqrt{2\pi}(z_1-z_2)} \\
&=
(1+ O(\bar \ell^3/\bar n))
\frac{z_1}{z_2} \mathrm{e}^{-\sqrt{2 d}(z_1 - z_2)} (1 + z_2^{-3/5})\,.
\end{align*}
This implies that
$$\frac{\lambda_{v, N, z_1, z_1 + J_{\bar \ell}}}{\lambda_{v, N, z_2, z_2 + J_{\bar \ell}}} 
=(1+ O(\bar \ell^3/\bar n))
 \frac{z_1}{z_2} \mathrm{e}^{-\sqrt{2\pi}(z_1 - z_2)} (1 + z_2^{-3/5})\,.$$
Together with \eqref{eq-limit-J-ell}, the above display implies that
$$\lim_{z_1, z_2\to \infty} \limsup_{\bar L\to \infty}\limsup_{N \to \infty} \frac{z_2e^{-\sqrt{2\pi}z_2}
\E
\Lambda_{N, z_1}}{z_1e^{-\sqrt{2\pi}z_1}\E
\Lambda_{N, z_2}} = \lim_{z_1, z_2\to \infty} \liminf_{\bar L\to \infty} \liminf_{N \to \infty} 
\frac{z_2e^{-\sqrt{2\pi}z_2} \E
\Lambda_{N, z_1}}{
	z_1e^{-\sqrt{2\pi}z_1}	\E\Lambda_{
N, z_2}}  =  1\,.$$
Along with \eqref{eq-Lambda-KL}, this
completes the proof of \eqref{eq-righttail-finefield} for some $\beta^*_{ K', L'}$. From \eqref{eq-lower-tail-fine-field} and \eqref{eq-upper-tail-fine-field}, we see that $c_\alpha\leq \beta^*_{K', L'} \leq C_\alpha$ for all $K', L'$. This completes the proof of the proposition.  \qed

\end{document}